\documentclass[10pt]{amsart}

\usepackage{amsthm,amsmath,amssymb,epsfig,graphicx,latexsym,amsthm,mathdots}

\usepackage{graphicx,amsfonts,subfigure}

\usepackage{hyperref}
\numberwithin{equation}{section}

\usepackage[usenames, dvipsnames]{color}

%\usepackage[color]{showkeys}

%%%  END OF PROOF %%%%%%%%%%%%%%%%%%%%%

%--------------------------------------------------------
%-------%           THEOREMS
\theoremstyle{plain}
\newtheorem{theorem}{Theorem}[section]
\newtheorem{lemma}[theorem]{Lemma}
\newtheorem{proposition}[theorem]{Proposition}
\newtheorem{corollary}[theorem]{Corollary}

\theoremstyle{remark}
{
    \newtheorem{definition}[theorem]{Definition}
    
    \newtheorem{remark}[theorem]{Remark}

}
%--------------------------------------------------------

%%%%%%%%%%%%%%%%%%%%%%%%%%%%%%%%%%%%%%%%%%%%%%%%%%%%%%%%%%%%
%                 TEX MACRO FILE, F. BONNANS
%                       APRIL 27, 2003
%%%%%%%%%%%%%%%%%%%%%%%%%%%%%%%%%%%%%%%%%%%%%%%%%%%%%%%%%%%%

%\newcommand{\tofix}[1]{#1}

% OPTIONAL NEWPAGE
%\newcommand{\newpagec}{\newpage}

\def\dd{{\rm d}}
\newcommand{\ddt}{\frac{\rm d}{{\rm d} t} }

%%%%%%%%%%%%%%%%%%%%%%%%%%%%%%%%%%%%%%% EMPHAZISE, ETC 

%%%%%%%%%%%%%%%%%%%%%%%%%%%%%%%%%%%%% GRAPHES 
% \newcommand{\arc}[2]{(#1,#2)}

%%%%%%%%%%%%%%%%%%%%%%%%%%%%%%%%%%%%% DIVERS S. Gaubert

\def\weight(#1,#2){c_{#1,#2}}
%%%%%%%%%%%%%%%%%%%%%%%%%%%%%%%%%%%%% DIVERS RO

\def\ra{{\rm ra}}

%%%%%%%%%%%%%%%%%%%%%%%%%%% Prog Dyn

%%%%%%%%%%%%%%%%%%%%%% FRENCH %%%%%%%%%%%%%%%%%%%%%%%%%%%%%%
%%%%%%%%%%%%%%   FRANCAIS / ANGLAIS
%%% VERSION FRANCAISE

% TEXTE

% MATHS

%
%%% VERSION ANGLAISE
% \if {
  
% TEXTE

% MATHS

% } \fi
%%%%%%%%%%%%%%%%%%%%%%%%%%%%%%%%%%%%%%%%%%%%%%%%%%%%%%%%%%

\if {

}{{\caption}}
} \fi

%%%%%%%%%%%%%%%%%%%%%%%%%%%%%%%%%%%%%%%%%%%%%%%%%%%%%%%%%%%%

%%%%%%%%%%%%%%%%%%%%%%%%%%%%%%%%%%%%%%%%%%%%%%%%%%%%%%%%%%%%

%%%%%%%%%%%%%%%%%%%%%%%%%% PROOFS %%%%%%%%%%%%%%%%%%%%%%%%%%%%
% \newcommand{\debdem}{\begin{proof}} \def\findem{\end{proof}}

%%% \newenvironment{proof}{{\noindent \bf D\'emonstration.}\quad}{\hfill{$\blacksquare$}\medskip}

%%% \newcommand{\qed}{\findem}
%%%%%%%%%%%%%%%%%%%%%%%%%%%%%%%%%%%%%%%%%%%%%%%%%%%%%%%%%%%%

%%%%%%%%%%%%%%%%%%%%%%%%%%%%%%%% HAT %%%%%%%%%%%%%%%%%%%%%%%%

\def\uh{\hat{u}}

\def\xh{\hat{x}}

%%%%%%%%%%%%%%%%%%%%%%%%%%%%%%%% BAR %%%%%%%%%%%%%%%%%%%%%%%%

\def\hb{\bar{h}}

\def\vb{\bar{v}}

\def\xb{\bar{x}}
\def\yb{\bar{y}}
\def\zb{\bar{z}} 

%%%%%%%%%%%%%%%%%%%%%%%%%%%%%%%% UPPERCASE BAR %%%%%%%%%%%%%%

%%%%%%%%%%%%%%%%%%%%%%%%%%%%%%%% TILDE %%%%%%%%%%%%%%%%%%%%%%%%

%%%%%%%%%%%%%%%%%%%%%%%%%%%%%%%% UPPERCASE TILDE %%%%%%%%%%%%%%%%

%%%%%%%%%%%%%%%%%%%%%%%%%%%%%%%% BOLDFACE %%%%%%%%%%%%%%%%%%%%%%%

%%%%%%%%%%%%%%%%%%%%%%%%%%%%%%%% CAL %%%%%%%%%%%%%%%%%%%%%%%%%%%%

%%%%%%%%%%%%%%%%%%%%%%%%%%%% OTHER CAL %%%%%%%%%%%%%%%%%%%%%%%%%%

%%%%%%%%%%%%%%%%%%%%%%%%%%%%%%%% MATHCAL %%%%%%%%%%%%%%%%%%%%%%%%

\def\C{\mathcal{C}}

\def\LL{\mathcal{L}}
\def\M{\mathcal{M}}

\def\P{\mathcal{P}}

\def\Z{\mathcal{Z}}
%%%%%%%%%%%%%%%%%%%%%%%%%%%% AUTRES MATHCAL %%%%%%%%%%%%%%%%%%

%%%%%%%%%%%%%%%%%%%%%%% SPECIAL   %%%%%%%%%%%%%%%%%%%%%%%%
% NOTATION HAMILTONIEN

%%%%%%%%%%%%%%%%%%%%%%% UNDERLINE %%%%%%%%%%%%%%%%%%%%%%%%

%

%%%%%%%%%%%%%%%%%%%%%%% GREEKS %%%%%%%%%%%%%%%%%%%%%%%%
\def\eps{\varepsilon}

\def\Om{{\Omega}}

%%%%%%%%%%%%%%%%%%%%%%%% MATH OPERATORS %%%%%%%%%%%%%%%%
\def\1B{{\bf  1}}

\def\dist{\mathop{\rm dist}}

\def\intt{\mathop{\rm int}}

\def\supp{\mathop{\rm supp}}

\def\Ker{\mathop{\rm Ker}}

\def\Max{\mathop{\rm Max}}

%%%%%%%%%%%%%%%% UNDERLINED AND OVERLINED LIMINF AND LIMSUP %%%

%%%%%%%%%%%%%%%%%%%%%%%%%%%%%%%% NUMBERS %%%%%%%%%%%%%%%%%%
\def\half{\mbox{$\frac{1}{2}$}}

\def\1B{{\bf  1}}

%%%%%%%%%%%%%%%%%%%%%%%%%%%%%%%%%  NUMBER SPACES  %%%%

\newcommand{\RR}{\mathbb{R}}

\def\cN{\mathbb{N}}

\def\cR{\mathbb{R}}

% \newcommand{\cR}{I\!\! R} \newcommand{\cN}{I\!\!N}
%\newcommand{\cRbar}{\bar {I\!\! R}}

%%%%%%%%%%%%%%%%%%%%%%%%%%%%%%%%  ENVIRONMENTS %%%%%%%%%%%%%%%%%%
\newcommand\be{\begin{equation}}
\newcommand\ee{\end{equation}}
\newcommand\ba{\begin{array}}
\newcommand\ea{\end{array}}
\newcommand{\bea}{\begin{eqnarray}}
\newcommand{\eea}{\end{eqnarray}}
\newcommand{\bean}{\begin{eqnarray*}}
\newcommand{\eean}{\end{eqnarray*}}

%%%%%%%%%%%%%%%%%%%%%%%%%%%%%%%%  SPACING %%%%%%%%%%%%%%%%%%

%%%%%%%%%%%%%%%%%%%%%%%%%%%%%%%%  MISC %%%%%%%%%%%%%%%%%%

%\newcommand{\eqref}[1]{(\ref{#1})}

\def\rar{\rightarrow}

\def\ds{\displaystyle}
\def\disp{\displaystyle}

\def\la{\langle}
\def\ra{\rangle}
%%%%%%%%%%%%%%%%%%%%%%%%%% FIGURES %%%%%%%%%%%%%%%%%%%%%%%%%%%%%%%%%%

%%% EOF

%------- NEW COMMANDS TO BE ADDED TO THE MACRO
\newcommand{\umin}{ u_{\rm min}}
\newcommand{\umax}{ u_{\rm max}}
\def\benl{\begin{equation*}}
\def\eenl{\end{equation*}}

\newcommand{\dtt}{ \mathrm{d}t}
\newcommand{\xib}{ \bar\xi}

\DeclareMathAlphabet{\mathpzc}{OT1}{pzc}{m}{it}

%----- END OF NEW COMMANDS 

\begin{document}

\title[Control-affine problems with state constraints]{Second order analysis of control-affine problems with scalar state constraint}
\author[M.S. Aronna and F. Bonnans and B.S. Goh]
{M. Soledad Aronna\address{EMAp/FGV, 22250-900 Rio de Janeiro, Brazil}\email{soledad.aronna@fgv.br}
and
J. Fr\'{e}d\'{e}ric Bonnans\address{INRIA-Saclay and Centre de 
Math\'ematiques Appliqu\'ees, Ecole Polytechnique, 91128 Palaiseau, France}
\email{Frederic.Bonnans@inria.fr}  
and 
Bean San Goh\address{Curtin Sarawak Research Institute, 98009 Miri, Sarawak, Malaysia}\email {bsgoh@curtin.edu.my}
}

\thanks{This article will appear in {\em Mathematical Programming - Series A}}

\maketitle

\begin{abstract}
In this article we establish new second order necessary and sufficient
optimality conditions for a class of control-affine problems with a
scalar control and a scalar state constraint. These optimality
conditions extend to the constrained state framework the Goh
transform, which is the classical tool for obtaining an extension of
the Legendre condition. 
\if{
We propose a shooting algorithm to solve numerically this class of
problems and we provide a sufficient condition for its local
convergence. We provide examples to illustrate the theory.
} \fi
\end{abstract}

\section{Introduction}
%----------------------

%%% introduction

Control-affine problems have been intensively studied since the 1960s and there is a wide literature on this subject. In what respect to second order conditions, the main feature of these systems that are affine in the control variable is that the second derivative of the pre-Hamiltonian function with respect to the control vanishes, and hence the classical {\em Legendre-Clebsch conditions} hold trivially and do not provide any useful information. Second and higher order necessary conditions for problems affine in the control, without control nor state constraints were first established in \cite{Kel64,Goh66,GabKir72,JacSpe71,AgSachkov}. \if{ AgrGam76,Goh66aKelKopMoy67,} }\fi  The case with control constraints and purely bang-bang solutions was investigated by \cite{MilOsm98,AgrSteZez02,PogSpa11,MauOsm03b,Fel04,MR3012263} 
\if{Osm04,Fel03, ,Fel05 MauOsm03,}\fi
among many others, while the class of bang-singular solutions was analyzed in e.g. \cite{PogSte11,ABDL11,FraTon13}.

This article is devoted to the study of Mayer-type optimal control problems governed by the dynamics
\benl
\dot x_t = f_0(x_t) + u_t f_1(x_t),\quad \text{for a.a. } t\in [0,T],
\eenl
subject to endpoint constraints
$$
\Phi(x_0,x_T) \in K_\Phi,
$$
control constraints
$$
u_{\rm min} \leq u_t \leq u_{\rm max},
$$
and a scalar state constraint of the form
$$
g(x_t) \leq 0.
$$
For this class of problems,  we show necessary optimality conditions
involving the regularity of the control and the state constraint
multiplier at the junction points. Some of these necessary conditions
which hold at the junction points were proved in
\cite{Maurer77}. Moreover, we provide second order necessary and
sufficient optimality conditions in integral form obtained through the
{\em Goh transformation} \cite{Goh66}. 
\if{
We also propose a shooting-like numerical scheme and we show a
sufficient condition for its local quadratic convergence, that is also
a second order sufficient condition for optimality (in some sense to
be specified later on). Finally, we solve numerically an example of
practical interest.
} \fi

% BIBLIO FOR APPLICATIONS:
This investigation is strongly motivated by applications since it allows to deal with both control and state constraints, which appear naturally in realistic models. Many practical examples can be found in the existing literature, a non exhaustive list includes the prey-predator model \cite{GLV74}, the Goddard problem in presence of a dynamic pressure limit \cite{SeyCli93,GraPet08}, an optimal production and maintenance system studied in \cite{MauKimVos05}, and a recent optimization problem of running strategies \cite{AftBon14}. 
We refer also to \cite{dePinho2005}, \cite{Bonnard2003},  \cite{Schaettler2006} and references therein.

% BIBLIO FOR SOC WITH STATE CONSTRAINTS:
About second order analysis in the state constrained case,
we quote the early work by Russak
\cite{MR0397507,MR0390876},
and more recently,
 Malanow\-ski and Maurer \cite{MalMau96}, Bonnans and Hermant \cite{BonHer09} provided second order necessary and sufficient optimality conditions and related the sufficient conditions with the convergence of the shooting algorithm in the case where the strengthened Legendre-Clebsch condition holds.
Second order necessary conditions for the general nonlinear case with phase constraints were also proved in \cite{MR780283,MR2376656}.
For control-affine problems with bounded scalar control variable, a scalar state constraint (as is the case in this article) and solutions (possibly) containing singular, bang-bang and constrained arcs, Maurer \cite{Maurer77} proved necessary conditions (similar to those developed by McDanell and Powers \cite{MacPow71}) that hold at the junction points of optimal solutions.
In Maurer et al. \cite{MauKimVos05} they extended to the
state-constrained framework, a second order sufficient test for
optimality given in Agrachev et al. \cite{AgrSteZez02} and
Maurer-Osmolovskii \cite{MauOsm05} for optimal bang-bang solutions.
Also in \cite{Schaettler2006}, the author provided a synthesis-like method to prove optimality for a class of control-affine problems with scalar control, vector state constraint and bang-constrained solutions. Some remarks about how the method in \cite{Schaettler2006} could be extended to general bang-singular-constrained solutions are given in \cite{Schaettler2005}, but no proof of the validity of this extension is provided.

% DEGENERACY
There is a literature dealing with the important case when the
standard statement of Pontryagin's principle is degenerate in the
sense that the costate is equal to zero, so that the principle is trivially verified and no information of the optimal solution is provided. In some cases, a nontrivial version of 
Pontryagin's principle can be obtained, see in particular 
Arutyunov \cite{MR1845332},
Rampazzo and Vinter \cite{MR1814264}. 
Also second order conditions for the general nonlinear case were
stated in 
strongly non-degenerate form, see e.g. \cite{MR2376656,MR1845332}.

\if{
% BIBLIO FOR NUMERICAL RESOLUTION, AND THEN SHOOTING ALGORITHM (cover the nonlinear case, then the affine case, cite AroBonMar and Mau79 at least):
As it is commonly known nowadays, the application of the necessary conditions provided by Pontryagin's Maximum Principle leads to an associated two-point boundary-value problem (TPBVP) for the optimal trajectory and its corresponding multiplier. A natural way for solving numerically TPBVPs is the application of {\em shooting-like algorithms.} This type of algorithms has been used extensively to solve optimal control problems (see e.g. \cite{Bul71,Pes94} and references therein) and, in particular, it has been applied to control-affine problems both with and without state constraints. Maurer \cite{Mau76} proposed a shooting scheme for solving a problem  with bang-singular solutions, which was generalized quite recently by Aronna, Bonnans and Martinon in \cite{AroBonMar11}, where they provided a sufficient condition for its local convergence.
Both these articles \cite{Mau76} and \cite{AroBonMar11} analyze the case when no state constraints are present.
 Practical control-affine problems with state constraints were solved
 numerically in several articles, a non extensive list includes Maurer
 and Gillessen \cite{MauGil75}, Oberle \cite{Obe79} and Fraser-Andrews
 \cite{Fra89}.
} \fi

Up to our knowledge, there is no result in the existing literature
about second order necessary and sufficient conditions in integral
(quadratic) form for control-affine problems with state constraints
and solutions containing singular arcs.
\if{
(ii) sufficient conditions for the convergence of shooting algorithms
in this context.
} \fi

The paper is organized as follows. In Section \ref{SecFramework} we give the basic definitions and show necessary optimality conditions concerning the regularity of the optimal control and associated multipliers. Section \ref{SecSOC} is devoted to second order necessary optimality conditions in integral form and to the Goh transformation, while second order sufficient conditions are provided in Section \ref{SecSC}.
 \if{
A shooting-like method and a sufficient condition for its local
quadratic convergence are given in Section \ref{SecShooting}. This
algorithm is implemented in Section \ref{SecExamples} to solve
numerically a variation of the regulator problem. 
} \fi
The Appendix is consecrated to the presentation of abstract results on second order necessary conditions.
 
 \vspace{4pt}

\noindent {\bf Notations.} 
Let $\cR^k$ denote the $k-$dimensional real space, i.e. the space of column real vectors of dimension $k,$ and by  $\cR^{k*}$ its corresponding dual space, which consists of $k-$dimensional row real vectors. With $\cR^k_+$ and $\cR^k_-$ we refer to the subsets of $\cR^k$ consisting of vectors with nonnegative, respectively nonpositive, components.
We write $h_t$ for the value of function $h$ at time $t$ if $h$ is a function that depends only on $t,$ and by $h_{i,t}$ the $i$th component of $h$ evaluated at $t.$
Let $h(t+)$ and $h(t-)$ be, respectively, the right and left limits of $h$ at $t,$ if they exist.
Partial derivatives of a function $h$ of $(t,x)$ are referred as $D_th$ or $\dot{h}$ for the derivative in time, and $D_xh,$ $h_x$ or $h'$ for the differentiations with respect to space variables. The same convention  is extended to higher order derivatives.
By $L^p(0,T)^k$ we mean the Lebesgue space with domain equal to the interval $[0,T]\subset \cR$ and with values in $\cR^k.$ The notations $W^{q,s}(0,T)^k$ and $H^1(0,T)^k$ refer to the Sobolev spaces (see Adams \cite{Ada75} for further details on Sobolev spaces).
We let $BV(0,T)$ be the set of functions with bounded total variation. 
In general, when there is no place for confusion, we omit the argument $(0,T)$ when referring to a space of functions. For instance, we write $L^\infty$ for $L^\infty(0,T),$ or $(W^{1,\infty})^{k*}$ for the space of $W^{1,\infty}-$functions from $[0,T]$ to $\cR^{k*}.$
We say that a function $h: \cR^k \to \cR^d$ is of class $C^\ell$ if it is $\ell-$times continuously differentiable in its domain.

\if{

\bigskip

%%%% notes on the introduction, check that we include all the references %%%

{\bf References to be included}. 

\begin{itemize}
\item Second order necessary conditions: abstract results \cite{PAOP}
(Kawasaki, Cominetti).
%\item {\em Application to state constrained problems, in the context of nonlinear control:} Malanowski-Maurer \cite{MalMau96} and Bonnans-Hermant \cite{BonHer09}, Karamzin \cite{Kar07},  dealt with a problem having mixed control-state and pure state running constraints and satisfying the strong Legendre-Clebsch condition (not verified in our affine-input case).
%\item {\em Application to singular control (and bang bang) control problems:} Maurer, Maurer and Vossen \cite{Maurer77,MauVos09} and Maurer-Wiegand \cite{MauWie92}?? 
%\item {\em Study of shooting algorithms:}  General references:  Goodman-Lance \cite{GooLan56}  and Morrison et al. \cite{MorRilZan62} with application to optimal control problems in  Bulirsch \cite{Bul71}. \item {\em Shooting for control-affine systems:} Maurer-Gillessen \cite{MauGil75}, Maurer \cite{Mau76}, Oberle \cite{Obe79}, Fraser-Andrews \cite{Fra89b,Fra89},Martinon and Gergaud \cite{MR2316808}. 
%\item {\em Numerics in control-affine systems with state constraints:} See Goddard problem in Seywald and Cliff \cite{SeyCli93} and Graichen and Petit \cite{GraPet08} (the control is discontinuous). See Aftalion-Bonnans \cite{AftBon14}. See Maurer-Kim-Vossen \cite{MauKimVos05} for an example with state-constraints, and bang-bang solution.
\item {\em Geometric approach:} Bonnard and Kupka \cite{MR1441391}:
optimality for the minimal time problem in the absence of 
conjugate point.
\item {\em Others to look at:} Hoehener \cite{Hoe12},  Fraser-Andrews \cite{Fra96}, ...
\end{itemize}

\bigskip

}\fi

\section{Framework}\label{SecFramework}

\subsection{The problem}
%-----------------------------
Consider the control and state spaces $L^\infty$ and $(W^{1,\infty})^n,$ respectively. We say that a control-state pair $(u,x)\in L^\infty\times(W^{1,\infty})^n$
is a  {\em trajectory} if it satisfies both the {\em state equation}
\be
\label{bsbstateeq}
\dot x_t = f_0(x_t) +u_{t} f_1(x_t), \quad \text{for a.a. } t\in [0,T],
\ee
and the finitely many  {\em endpoint constraints}
of equality and inequality type
\be
\Phi(x_0,x_T) \in K_\Phi.
\ee
Here $f_0$ and $f_1$ are twice continuously differentiable and Lipschitz continuous vector fields over $\RR^n$,
$\Phi$ is of class $C^2$ from $\RR^n\times\RR^n$ to $\RR^{n_1+n_2},$ 
and
\be
K_\Phi := \{ 0 \}_{\RR^{n_1}} \times \RR^{n_2}_-,
\ee
where $\{ 0 \}_{\RR^{n_1}}$ is the singleton consisting of the zero vector of $\RR^{n_1}$ and $\RR^{n_2}_- := \{y\in \RR^{n_2}: y_i\leq 0,\, \text{for all } i=1,\dots,n_2\}.$
Given $(u,x_0)\in L^\infty\times\RR^n$, 
\eqref{bsbstateeq} has a unique solution. In addition, we consider
the {\em cost functional} $\phi:\cR^n \times \cR^n \to \cR,$ then
 {\em bound control constraints} 
\be
\label{bsbcc} 
\umin\leq u_{t} \leq \umax, \quad \text{for a.a. } t\in [0,T],
\ee
and a {\em scalar state constraint} 
\be
\label{stateconstraint1}
g(x_t) \leq 0,\quad \text{for all } t\in [0,T],
\ee
with $\phi$ and $g:\RR^n\rar \RR$ of class $C^2.$ 
Here we allow $u_{\min}$ and $u_{\max}$ to be either finite real numbers, or to take the values  $-\infty$ or $+\infty,$ respectively,  in the sense that problems with control constraints of the form $u_t \leq u_{\max}$ or $u_{\min} \leq u_t$ are also considered in our investigation, as well as problems in the absence of control constraints.
We say that the trajectory $(u,x)$
is {\em feasible} if it satisfies 
\eqref{bsbcc}-\eqref{stateconstraint1}.
Let us then consider the optimal control problem in the Mayer form
\be
\label{P}\tag{P}
\min \phi(x_0,x_T);
\qquad \text{subject to \eqref{bsbstateeq}-\eqref{stateconstraint1}.}
\ee

\subsection{Regular extremals}

%-----------------------------------
We set $f(u,x) := f_0(x)+u f_1(x)$,
and define the {\em pre-Hamiltonian function}
and the {\em endpoint Lagrangian,} respectively, by 
\be
\label{defpreham}
\left\{ 
\begin{split}
H(u,x,p) &:= p f(u,x) = p \big(f_0(x)+u f_1(x) \big),
\\
\ell^{\beta,\Psi}(x_0,x_T) &:= \beta \phi(x_0,x_T) + \Psi \Phi(x_0,x_T),
\end{split}
 \right.
\ee
where $p\in \RR^{n*}$, $\beta\in\RR$ and $\Psi \in \RR^{(n_1+n_2)*}.$

Any function $\mu \in BV(0,T)$ (shortly, $BV$) has left limit on $(0,T]$ and right limits on
$[0,T)$ and, therefore, the values $\mu_{0+}$ and $\mu_T$ are well-defined. Moreover, $\mu$ has a distributional derivative that belongs to the space $\M(0,T)$ (shortly, $\M$) of finite Radon measures. Conversely, any measure $\dd\mu\in \M$ can be identified with the derivative of a function $\mu$ of bounded variation such that $\mu_T \in BV_0,$ i.e., $\mu$ belongs to the {\em space of bounded variation functions that vanish at time $T.$}  

Let $(u,x)$ be a feasible trajectory. 
We say that $\Psi \in \RR^{(n_1+n_2)*}$ is 
{\em complementary to the endpoint constraint} if
\be
\label{lmscac1}
\Psi \in N_\Phi \big( \Phi(x_0,x_T) \big),
\ee
where $N_\Phi \big( \Phi(x_0,x_T) \big)$ denotes the {\em normal cone to $K_\Phi$} at the point $\Phi(x_0,x_T),$
i.e.
\be
\label{NPhi}
N_\Phi \big( \Phi(x_0,x_T) \big)
:= \{ \Psi \in \RR^{(n_1+n_2)*} :  \Psi_i \geq 0, \; 
\Psi_i \Phi_i(x_0,x_T)=0, \, i=n_1+1,\ldots,n_1+n_2\}.
\ee
A bounded variation function $\mu$ is 
{\em complementary to the state constraint} if and only if
\be
\label{mucomsc}
\dd  \mu \geq 0, \quad \text{and} \quad 
\int_0^T g(x_t) \dd \mu_{t} =0.
\ee 
For $\beta\in \RR,$  $\Psi\in \RR^{(n_1+n_2)*}$ and 
$\mu\in BV_0,$
the {\em costate equation} associated with $(\beta,\Psi,\dd\mu)$ is given by
\be
\label{costateeq}
- \dd p_t =
%  H_x \dd t 
p_t f_x(u_t,x_t) \dd t + g'(x_t) \, \dd \mu_{t},
\quad \text{for a.a. } t\in [0,T],
\ee
with endpoint conditions
\be
\label{lmscac7}
(- p_{0},p_{T}) = D\ell^{\beta,\Psi} (x_0,x_T).
\ee
Given $(\beta,\Psi, \dd\mu) \in\RR\times \RR^{(n_1+n_2)*} \times \M,$ the boundary value problem \eqref{costateeq}-\eqref{lmscac7} has at most one solution.
In addition, the {\em condition of minimization} of 
the pre-Hamiltonian $H$  implied by the Pontryagin's Maximum Principle
 can be expressed as follows,
for a.a. $t \in [0,T],$
\be
\label{lmscac4a}
\left\{ \ba{lll}
u_t = \umin, & \text{if $\umin > -\infty$ and $p_t f_1(x_t) >0$,}
\\
u_t = \umax, & \text{if $\umax < +\infty$ and $p_t f_1(x_t) <0,$}
\\
p_tf_1(x_t) = 0, &\text{if } \umin < u_t < \umax.
\ea\right.
\ee
Denote the quadruple of dual variables by 
$\lambda := (\beta,\Psi,p,\dd\mu)$,
element of the space
\be
E^\Lambda:=\RR_+\times\RR^{(n_1+n_2)*}\times BV^{n*} \times
\M.
\ee
The {\em Lagrangian} of the problem is
\be
\label{lmscac5}
\LL(u,x,\lambda):=\ell^{\beta,\Psi}(x_0,x_T) 
+ \int_0^T p_t \big(f(u_t,x_t) - \dot x_t \big) \dd t 
+ \int_0^T g(x_t) \dd \mu_{t}.
\ee
Note that the costate equation \eqref{costateeq} expresses the stationarity
of $\LL$ with respect to the state. 
For a feasible trajectory $(u,x) \in L^\infty \times (W^{1,\infty})^n,$ define the {\em set of Lagrange multipliers}  as
\be
\label{Lambda}
\Lambda(u,x):= \left\{ \ba{lll}
\lambda = (\beta,\Psi,p,\dd\mu) \in E^\Lambda: \;
(\beta,\Psi,\dd\mu)\neq 0; 
\\
\text{\eqref{bsbstateeq}-\eqref{stateconstraint1} and
  \eqref{lmscac1}-\eqref{lmscac4a} hold} 
\ea\right\}.
\ee
\begin{remark}
\label{remPL.r}
We could as well call $\Lambda(u,x)$ the set of Pontryagin multipliers
since they satisfy  condition of minimization of the Hamiltonian. 
But since the dynamics is an affine function of the control variable,
the condition of minimization of the Hamiltonian is equivalent to the 
stationarity condition 
\eqref{lmscac4a} which gives the classical definition of
Lagrange multipliers in this setting. 
\end{remark}
In addition we set, for $\beta\in \RR_+,$
\be
\label{LambdaBeta}
\Lambda_\beta(u,x) := \{ (\beta,\Psi,p,\dd\mu)\in \Lambda(u,x)
\}. 
\ee
Since $\Lambda(u,x)$ is a cone we have that 
\be
\Lambda(u,x) = 
\left\{ \ba{lll}
\RR_+ \Lambda_1(u,x), &
\text{ if $\Lambda_0(u,x)=\emptyset$,}
\\ 
\Lambda_0(u,x)  + \RR_+ \Lambda_1(u,x),
&\text{ otherwise.}
\ea\right.
\ee

Consider a nominal feasible trajectory $(\uh,\xh) \in L^\infty \times (W^{1,\infty})^n.$ Set
\be
\label{AB}
A_t := D_x f(\uh_t,\xh_t),\quad \text{for }t\in [0,T].
\ee
From now on, when the argument of a function is omitted, we mean that
it is evaluated 
at the nominal pair $(\uh,\xh).$ In particular, we write $\Lambda$ to refer to $\Lambda(\uh,\xh).$

For $(v,z^0) \in L^\infty\times \RR^n$, let $z[v,z^0] \in (W^{1,\infty})^n$ denote the solution 
of the {\em linearized state equation}
\be
\label{lineq}
\dot z_t = A_t z_t + v_t f_{1,t}, \quad \text{for a.a. }t\in [0,T],
\ee
with initial condition
\be
\label{lineq0}
z_0 = z^0.
\ee

Let $\bar\Phi_E$ denote the function from $L^\infty \times \cR^n$ to $\cR^{n_1}$ that,
to each 
$(u,x_0) \in L^\infty \times \cR^n,$  assigns the value $\big(\Phi_1(x_0,x_T),\dots,\Phi_{n_1}(x_0,x_T) \big),$ where $x$ is the solution of \eqref{bsbstateeq} associated with $(u,x_0).$
For some results obtained in this article, we shall consider the following {\em qualification condition,} which corresponds to {\em Robinson condition \cite{Rob76a}} (see also \eqref{qualif} and Remark \ref{tool_ocfo.r3} in the Appendix):
 \be
\label{tool_ocfo.qualifoc}
\left\{ \ba{ll}
{\rm (i)} & D \bar\Phi_E(\uh,\xh_0)\text{ is onto }  \text{from } L^\infty \times \cR^n \text{ to } \RR^{n_1},
\\
{\rm (ii)} & \text{there exists } (\vb,\zb^0)\in L^\infty \times \cR^n \text{ in the kernel of 
$D \bar\Phi_E(\uh,\xh_0)$,} 
\\ & \text{such that, for some } \eps>0, \text{ setting } \zb=z[\vb,\zb^0], \text{ one has:}
\\ &     \uh_t + \vb_t \in  [\umin+\eps,\umax-\eps], \, 
            \text{ for  a.a. $t \in [0,T],$}
           %\text{$v_t \geq \eps$ over $B_-$ arcs,                   $v_t \leq \eps$ over $B_+$ arcs},
\\ & 
           g(\xh_t) + g'(\xh_t)\zb_t < 0, \text{ for all } t\in [0,T], %  \text{over C arcs,}
\\ & 
            \Phi'_i(\xh_0,\xh_T)(\zb^0,\zb_T) <0, \text{ if }
            \Phi_i(\xh_0,\xh_T) =0, \text{ for } n_1+1\leq i \leq n_1+n_2.
\ea\right.
\ee 
Here the notation $\uh_t+\vb_t \in  [\umin+\eps,\umax-\eps]$ means
that, a.e., 
$\uh_t+\vb_t \in  [\umin+\eps,+\infty)$ if $\umax=+\infty$ and $\umin$ is finite; $\uh_t+\vb_t \in  (-\infty,\umax-\eps)$ if $\umin=-\infty$ and $\umax$ is finite; and $\uh_t+\vb_t \in \cR$ if neither $\umin$ nor $\umax$ is finite.

\begin{definition}
A {\em weak minimum} for \eqref{P} is a feasible trajectory $(u,x)$ such that $\phi(x_0,x_T) \leq \phi(\tilde x_0,\tilde x_T)$ for  any feasible $( \tilde u,\tilde x)$ for which $\|(\tilde u,\tilde x)-(u,x)\|_{\infty}$ is small enough.
\end{definition}

\begin{theorem}
Assume that $(\uh,\xh) $ is a weak minimum for \eqref{P}.
Then 
{\rm (i)} 
the set $\Lambda$
of Lagrange multipliers is nonempty,
{\rm (ii)} 
if the qualification condition 
\eqref{tool_ocfo.qualifoc} holds, then 
$\Lambda_0$ is empty, and 
$\Lambda_1$ is bounded and weakly* compact.
\end{theorem}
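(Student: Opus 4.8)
The plan is to recast \eqref{P} as an abstract optimization problem in Banach spaces and to apply the first-order necessary conditions collected in the Appendix. I take as decision variable $w=(u,x_0)\in \mathbb{W}:=L^\infty\times\cR^n$, and let $x=x[w]$ be the solution of the state equation \eqref{bsbstateeq}. Since $f_0,f_1$ are $C^2$ and Lipschitz, the control-to-state map $w\mapsto x[w]$ is of class $C^2$ from $\mathbb{W}$ into $(W^{1,\infty})^n$, its first derivative being given by the linearized system \eqref{lineq}--\eqref{lineq0}. Define the cost $J(w):=\phi(x_0,x_T)$, the endpoint map $w\mapsto\Phi(x_0,x_T)\in\cR^{n_1+n_2}$, and the state-constraint map $\mathcal{G}(w):=g(x[w]_\cdot)\in C([0,T])$. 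Problem \eqref{P} then reads: minimize $J(w)$ over $w$ subject to $u$ in the convex set $U_{\mathrm{ad}}:=\{u\in L^\infty:\umin\le u_t\le\umax\}$, to $\Phi(x_0,x_T)\in K_\Phi$, and to $\mathcal{G}(w)\in C_-:=\{\eta\in C([0,T]):\eta\le 0\}$.

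First I would invoke the abstract Fritz--John-type first-order necessary condition of the Appendix, applied at $(\uh,\xh_0)$. It delivers a nonzero triple of multipliers: a scalar $\beta\ge 0$ for the cost, a vector $\Psi$ lying in the normal cone $N_\Phi(\Phi(\xh_0,\xh_T))$ of \eqref{NPhi} for the endpoint constraint, and a continuous linear functional on $C([0,T])$ for the state constraint. By the Riesz representation theorem this last functional is a finite Radon measure, and its membership in the normal cone to $C_-$ forces it to be a nonnegative measure $\dd\mu$ supported on the contact set $\{t:g(\xh_t)=0\}$, which is exactly the complementarity \eqref{mucomsc}. I would then define the costate $p\in BV^{n*}$ as the unique solution of \eqref{costateeq}--\eqref{lmscac7} associated with $(\beta,\Psi,\dd\mu)$, whose uniqueness is recorded after \eqref{lmscac7}. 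Recognising that the costate equation encodes the stationarity of the Lagrangian \eqref{lmscac5} in the state, the remaining stationarity in the control reduces to the pointwise condition \eqref{lmscac4a}, equivalent to Hamiltonian minimization by Remark \ref{remPL.r}. The resulting $\lambda=(\beta,\Psi,p,\dd\mu)$ satisfies all defining relations of $\Lambda$, proving (i).

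For (ii) I would check that \eqref{tool_ocfo.qualifoc} is precisely Robinson's constraint qualification for the above abstract problem. The surjectivity in \eqref{tool_ocfo.qualifoc}(i) handles the equality block $\bar\Phi_E$, while the strictly feasible direction $(\vb,\zb^0)$ of \eqref{tool_ocfo.qualifoc}(ii) — lying in $\ker D\bar\Phi_E(\uh,\xh_0)$, keeping $u$ strictly inside the bounds, and rendering the linearizations of both the state constraint and the active endpoint inequalities strictly negative — is the interior-point ingredient that, together with surjectivity, yields Robinson's condition (see \eqref{qualif} and Remark \ref{tool_ocfo.r3}). Under this qualification the abstract theorem guarantees that the set of multipliers normalized by $\beta=1$ is nonempty and bounded. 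To obtain $\Lambda_0=\emptyset$, I would argue by contradiction: if a nonzero multiplier had $\beta=0$, then testing the first-order stationarity against $(\vb,\zb^0)$ and exploiting the strict signs in \eqref{tool_ocfo.qualifoc}(ii) forces $\Psi=0$ and $\dd\mu=0$, hence $(\beta,\Psi,\dd\mu)=0$, a contradiction. Thus $\Lambda_0$ is empty, $\Lambda=\cR_+\Lambda_1$, and $\Lambda_1$ is bounded; weak* closedness of its defining relations (the sign and complementarity of $\dd\mu$, the linear adjoint relations, the normal-cone membership of $\Psi$) combined with the Banach--Alaoglu theorem then yields that $\Lambda_1$ is weakly* compact.

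The main obstacle is the functional-analytic treatment of the state constraint: justifying the Riesz identification of the state-constraint multiplier as a Radon measure, establishing that the adjoint equation driven by this measure admits a unique $BV$ costate, and verifying that the abstract stationarity in $u$ disintegrates into the a.e.\ pointwise condition \eqref{lmscac4a}. A secondary delicate point is the correct definition of $U_{\mathrm{ad}}$ and of its normal cone when $\umin$ or $\umax$ is infinite, as flagged after \eqref{tool_ocfo.qualifoc}; the strictly-interior requirement on $\uh+\vb$ must be read in the relaxed sense described there so that Robinson's condition still applies and the contradiction argument ruling out $\beta=0$ remains valid.
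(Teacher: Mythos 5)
Your part (ii) is essentially sound and follows the same route as the paper: you identify \eqref{tool_ocfo.qualifoc} with Robinson's condition \eqref{qualif}, rule out singular multipliers by testing stationarity against the strictly feasible direction $(\vb,\zb^0)$, and get boundedness plus weak* compactness of $\Lambda_1$ from the abstract machinery and Banach--Alaoglu. That is consistent with how the paper deduces (ii) from Proposition \ref{propderiv.p} and Theorem \ref{tool_ocfo.t2}.

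The genuine gap is in part (i). You invoke ``the abstract Fritz--John-type first-order necessary condition of the Appendix,'' but no such unqualified theorem exists in the paper: the only first-order existence result there, Theorem \ref{tool_ocfo.t2} (used through the bijection of Proposition \ref{propderiv.p}), \emph{requires} that $DG_E(\xh)$ --- in the control setting, $D\bar\Phi_E(\uh,\xh_0)$ --- be surjective, which is precisely condition \eqref{tool_ocfo.qualifoc}(i). So as written, your proof of nonemptiness tacitly assumes a qualification that part (i) of the theorem does not allow you to assume, and it produces nothing in the degenerate case where $D\bar\Phi_E(\uh,\xh_0)$ fails to be onto. The paper handles exactly this case by a separate, elementary construction: if the image of $D\bar\Phi_E(\uh,\xh_0)$ is a proper subspace of $\cR^{n_1}$, pick a nonzero $\Psi_E$ annihilating it, set $\beta=0$, $\Psi_i=0$ for $n_1<i\leq n_1+n_2$, $\dd\mu=0$, and take the corresponding costate; this yields a (singular) element of $\Lambda$. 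This two-case structure is the missing idea --- and if you instead wanted to import a genuine Fritz--John theorem from outside the paper, you would have to state it and verify its hypotheses, and its proof in this setting is exactly the above case distinction, so it cannot be bypassed. A secondary inaccuracy: Theorem \ref{tool_ocfo.t2} bounds $\hat\Lambda_1$, which is normalized by $\beta+\la \lambda_I, G_I(\xh)-y_I\ra=1$, not the set normalized by $\beta=1$; transferring boundedness to $\Lambda_1$ requires the bijection \eqref{maplambda} together with the absence of singular multipliers, so your deduction of boundedness must come after, not before, the argument that $\Lambda_0=\emptyset$.
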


\begin{proof}
% Point (ii) follows from \cite[Theorem 3.9]{PAOP}. 
If \eqref{tool_ocfo.qualifoc}(i) holds,
we deduce the result from proposition
\eqref{propderiv.p}.
Otherwise, 
there exists some nonzero $\Psi_E$ in the image of 
normal space to 
$D \bar\Phi_E(\uh,\xh_0)$. Setting $\Psi_i=0$
for all $n_1<i\leq n_1+n_2$, and $p=0$,
we obtain a (singular) Lagrange multiplier. 
This ends the proof.
\end{proof}

\subsection{Jump conditions}
%-----------------------------------------------

Given a function of time $h:[0,T] \to \cR^d$ for $d\in \cN,$ 
we define its {\em jump at time $t\in [0,T]$} by 
\be
[h_t] := h(t+) - h(t-),
\ee
when the left and right limits,
$h(t-)$ and $h(t+),$ respectively, exist and are finite. Here we adopt  the convention 
$h(0-) :=h_0$  and $h(T+) :=h_T$. 
For any function of bounded variation the associated jump function is well-defined.
For a function defined almost everywhere with respect to the Lebesgue measure, we will accord that its jump at time $t$ is the jump at $t$ of a representative of this function for which the left and right limit exist, provided that such a representative exists.
By the costate equation \eqref{costateeq},
we have that, for any $(\beta,\Psi,p,\dd \mu)\in \Lambda,$ 
\be
\label{stateconstraint4}
\left\{ \ba{ll} 
{\rm (i) } & \disp
[ p_t ] = - [ \mu_t ] g'(\xh_t),
\vspace{1mm}
\\
{\rm (ii) } & \disp
 \big[ H_{u}(t) \big] = [ p_t]  f_1(\xh_t) = - [ \mu_t ] g'(\xh_t) f_1(\xh_t).
\ea\right.
\ee 
In addition, if 
$[\uh_t]$ makes sense, 
then the jump in the derivative of the state constraint exists and
satisfies
\be
\label{uhuk.l1_1}
 \left[ \ddt g(\xh_t) \right] =  [ \uh_t ] g'(\xh_t) f_1(\xh_t).
 \ee

\begin{lemma}
\label{uhuk.l3vh}
Let $(\beta,\Psi,p,\dd \mu)\in \Lambda.$ Then, if $t\in [0,T]$ is such that $[H_u(t)]=0$ and $g'(\xh_t) f_1(\xh_t) \neq 0,$ then $\mu$ is continuous at $t.$
\end{lemma}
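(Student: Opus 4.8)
The plan is to read the conclusion off directly from the jump relation \eqref{stateconstraint4}(ii), which was already derived from the costate equation \eqref{costateeq}. That identity expresses the jump of the switching function as a scalar multiple of the jump of $\mu$, namely
\[
[H_u(t)] = -\,[\mu_t]\, g'(\xh_t) f_1(\xh_t),
\]
with the scalar coefficient being exactly $-g'(\xh_t) f_1(\xh_t)$. Since that coefficient is assumed nonzero, the vanishing of $[H_u(t)]$ should force $[\mu_t]=0$, and the lemma reduces to observing that a vanishing jump of a $BV$ function is continuity.

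Concretely, I would proceed as follows. First I would note that the two jumps appearing in the statement are well-defined: $\mu\in BV_0$ has one-sided limits on $[0,T]$, so $[\mu_t]=\mu(t+)-\mu(t-)$ makes sense, and since $p\in BV^{n*}$ while $\xh\in(W^{1,\infty})^n\subset C$ makes $f_1(\xh_{\,\cdot})$ continuous, the function $H_u(t)=p_t f_1(\xh_t)$ is of bounded variation (a product of a $BV$ function with a continuous one), so its jump $[H_u(t)]$ is likewise well-defined. Then I would substitute the hypothesis $[H_u(t)]=0$ into \eqref{stateconstraint4}(ii) to obtain $[\mu_t]\,g'(\xh_t) f_1(\xh_t)=0$. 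Using the second hypothesis $g'(\xh_t) f_1(\xh_t)\neq 0$, dividing by this nonzero scalar yields $[\mu_t]=0$, i.e. $\mu(t+)=\mu(t-)$, which is precisely the continuity of $\mu$ at $t$ (equivalently, $\dd\mu$ carries no atom at $t$).

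There is essentially no hard step here: the entire content is already packaged in \eqref{stateconstraint4}(ii), so the argument is a one-line division by the nonzero scalar $g'(\xh_t) f_1(\xh_t)$. The only points that warrant a word of care are the well-definedness of the jumps (handled above via the $BV$/continuity structure) and the elementary fact that, for a function of bounded variation, continuity at $t$ is equivalent to the vanishing of its jump there.
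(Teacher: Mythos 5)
Your proposal is correct and matches the paper's own proof, which simply cites item (ii) of \eqref{stateconstraint4}: since $[H_u(t)] = -[\mu_t]\,g'(\xh_t) f_1(\xh_t)$, the hypotheses force $[\mu_t]=0$, i.e.\ continuity of $\mu$ at $t$. Your added remarks on well-definedness of the jumps are harmless elaboration of the same one-line argument.
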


\begin{proof}
It follows from item (ii) in equation \eqref{stateconstraint4}.
\end{proof}

\begin{lemma}
\label{uhuk.l1}
Let $t\in (0,T)$ be such that 
$[\uh_t]$ makes sense. Then the following conditions hold
\be
\label{uHu}
\left\{ 
\begin{split}
{\rm (i) } \quad &
[ \uh_t ]   [ H_u(t)] =0,
\\
{\rm (ii) } \quad &
[\mu_t] \left[ \ddt g(\xh_t) \right] =0.
\end{split}
\right. 
\ee
\end{lemma}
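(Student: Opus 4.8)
The plan is to reduce both identities to the vanishing of a single scalar quantity, and then to squeeze that quantity to zero from two sides using, respectively, the Pontryagin minimization condition and the state-constraint complementarity. Writing $a := g'(\xh_t) f_1(\xh_t)$ and combining \eqref{stateconstraint4}(ii) with \eqref{uhuk.l1_1}, one has
\[
[\uh_t]\,[H_u(t)] = -\,[\uh_t]\,[\mu_t]\,a, \qquad [\mu_t]\left[\ddt g(\xh_t)\right] = [\uh_t]\,[\mu_t]\,a,
\]
so that $[\uh_t][H_u(t)] = -[\mu_t][\ddt g(\xh_t)]$; in particular (i) and (ii) are equivalent and both hold as soon as $[\uh_t][\mu_t]\,a = 0$. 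I would prove this last equality by establishing the two inequalities $[\uh_t][\mu_t]\,a \ge 0$ and $[\uh_t][\mu_t]\,a \le 0$.

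For the first inequality I would invoke \eqref{lmscac4a}. Since $p\in BV$ and $\xh,f_1$ are continuous, the switching function $H_u(\cdot)=p_\cdot f_1(\xh_\cdot)$ admits one-sided limits at $t$, and by hypothesis so does $\uh$. Condition \eqref{lmscac4a} is exactly the optimality (variational-inequality) condition $(v-\uh_s)\,H_u(s)\ge 0$ for all admissible $v$ and a.a.\ $s$. Passing to the limit $s\to t^-$ and $s\to t^+$ along points where it holds, its one-sided limits give that $\uh(t-)$ and $\uh(t+)$ minimize $v\mapsto v\,H_u(t-)$ and $v\mapsto v\,H_u(t+)$ over the (possibly unbounded) control interval. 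The elementary monotonicity of the minimizer of a linear function of $v$ with respect to its slope then yields $[\uh_t]\,[H_u(t)]\le 0$, that is $[\uh_t][\mu_t]\,a\ge 0$.

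For the second inequality I would use the state constraint together with \eqref{mucomsc}. If $[\mu_t]=0$ there is nothing to prove; otherwise $\dd\mu\ge 0$ forces $[\mu_t]>0$, so $\dd\mu$ has an atom at $t$. Since $s\mapsto g(\xh_s)$ is continuous and nonpositive and $\int_0^T g(\xh_s)\,\dd\mu_s=0$, the nonpositive contribution $g(\xh_t)[\mu_t]$ of this atom must vanish, whence $g(\xh_t)=0$. Thus $t$ is a (global, hence local) maximum of the Lipschitz function $g(\xh_\cdot)$, so its left derivative $g'(\xh_t)\big(f_0(\xh_t)+\uh(t-)f_1(\xh_t)\big)$ is $\ge 0$ and its right derivative is $\le 0$; subtracting gives $[\ddt g(\xh_t)]\le 0$, and therefore $[\mu_t][\ddt g(\xh_t)]\le 0$, i.e.\ $[\uh_t][\mu_t]\,a\le 0$. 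Combining the two inequalities yields $[\uh_t][\mu_t]\,a=0$, which proves both (i) and (ii).

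The step requiring the most care is the passage of \eqref{lmscac4a} to the one-sided limits of $\uh$ at $t$, and in particular treating uniformly the cases where $\umin$ and/or $\umax$ are infinite (when both are infinite one has $H_u\equiv 0$ and the statement is immediate). The remaining points are routine: the identification of the one-sided limits $g'(\xh_t)\big(f_0(\xh_t)+\uh(t\mp)f_1(\xh_t)\big)$ with the one-sided derivatives of $g(\xh_\cdot)$ at $t$, which follows from the continuity of $\xh$, $f_0$, $f_1$ and the existence of one-sided limits of $\uh$.
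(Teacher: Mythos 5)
Your proposal is correct and follows essentially the same route as the paper: the same key identity $[\uh_t][H_u(t)] = -[\mu_t]\left[\ddt g(\xh_t)\right] $ obtained from \eqref{stateconstraint4} and \eqref{uhuk.l1_1}, the complementarity-plus-maximum argument giving one sign, and the minimization condition \eqref{lmscac4a} giving the other. The only difference is presentational (you derive two inequalities for a single scalar rather than splitting on $[\mu_t]=0$ first, and you spell out the one-sided limit argument behind \eqref{lmscac4a} that the paper only cites), which does not change the substance.
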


\begin{proof}
Note that 
\be
\label{mudotg}
[\mu_t] \left[ \ddt g(\xh_t) \right] = [\mu_t] g'(\xh_t) f_1(\xh_t) [\uh_t]
= -[H_u(t)] [\uh_t],
\ee
where the last equality follows from \eqref{stateconstraint4}. This
implies that (i) is equivalent to (ii),
and so we only need to prove (i), which holds trivially when $[\mu_t] = 0$.
Hence, let us assume that $[\mu_t] \neq 0$.
Then $g(\xh_t)=0$ in view of \eqref{mucomsc} and, necessarily, 
$t\mapsto g(\xh_t)$ attains its maximum at $t$, so that $\left[ \ddt g(\xh_t) \right] \leq 0.$ 
Since $\dd\mu \geq0,$ it follows that $[\mu_t] \geq 0$,
and therefore by \eqref{mudotg}, $[H_u(t)] [\uh_t] \geq 0.$ 
However, the converse inequality holds in view of \eqref{lmscac4a}.
The conclusion follows.
\end{proof}

We say that the {\em state constraint is of first order} if
\be
\label{1order}
g'(\xh_t)f_1(\xh_t) \neq 0,\quad \text{when } g(\xh_t)=0.
\ee

\begin{corollary}
\label{CorJump}
Assume that the state constraint is of first order.
Then if the control has a jump at time $\tau \in (0,T)$ for which $g(\xh_\tau)=0,$ then $\mu$ is continuous at $\tau$ for any associated multiplier $(\beta,\Psi,p,\dd \mu) \in \Lambda.$
\end{corollary}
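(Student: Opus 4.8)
The plan is to chain the two preceding lemmas, using the first-order hypothesis to feed the nondegeneracy $g'(\xh_\tau)f_1(\xh_\tau)\neq 0$ into both of them. The only data I would extract from the statement is that ``the control has a jump at $\tau$'' means precisely that $[\uh_\tau]$ makes sense \emph{and} is nonzero, and that $\tau$ is interior, so that Lemma \ref{uhuk.l1} (which requires $t\in(0,T)$) is applicable.

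First I would record the nondegeneracy of the switching surface at $\tau$. Since $g(\xh_\tau)=0$ by hypothesis and the state constraint is of first order, the defining property \eqref{1order} yields immediately
\be
g'(\xh_\tau) f_1(\xh_\tau) \neq 0.
\ee
Next, because $[\uh_\tau]$ makes sense, Lemma \ref{uhuk.l1}(i) gives $[\uh_\tau]\,[H_u(\tau)]=0$. As the control genuinely jumps at $\tau$, we have $[\uh_\tau]\neq 0$, so this forces
\be
[H_u(\tau)]=0.
\ee

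Finally I would invoke Lemma \ref{uhuk.l3vh}: its two hypotheses, namely $[H_u(\tau)]=0$ and $g'(\xh_\tau)f_1(\xh_\tau)\neq 0$, are exactly the two facts just established, and its conclusion is that $\mu$ is continuous at $\tau$. Since the argument used only the membership $(\beta,\Psi,p,\dd\mu)\in\Lambda$ through the jump relation \eqref{stateconstraint4}(ii) that underlies Lemma \ref{uhuk.l3vh}, it holds for every associated multiplier, which is what the corollary asserts.

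There is essentially no genuine obstacle here: the corollary is a direct specialization of Lemmas \ref{uhuk.l1} and \ref{uhuk.l3vh} to the case $g(\xh_\tau)=0$ under the first-order assumption. The only point requiring a moment's care is the cancellation step $[\uh_\tau]\,[H_u(\tau)]=0 \Rightarrow [H_u(\tau)]=0$, which is legitimate precisely because the word ``jump'' encodes $[\uh_\tau]\neq 0$; without an actual (nonzero) jump the conclusion could fail, so I would be careful to state that dependence explicitly.
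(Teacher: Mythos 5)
Your proof is correct and takes essentially the same route as the paper's: the paper also applies Lemma \ref{uhuk.l1}(i) to deduce $[H_u(\tau)]=0$ from $[\uh_\tau]\neq 0$, and then concludes $[\mu_\tau]=0$ from the jump relation \eqref{stateconstraint4}(ii) together with the first-order nondegeneracy $g'(\xh_\tau)f_1(\xh_\tau)\neq 0$. The only cosmetic difference is that you route this last step through Lemma \ref{uhuk.l3vh}, which is precisely the packaged form of that same relation, so the arguments coincide.
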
 

\begin{proof}
From the identity \eqref{uHu} in Lemma \ref{uhuk.l1}, if $[\uh_\tau]\neq 0,$ then $[H_u(t)] = 0.$ The latter implies that $[\mu_\tau]=0,$ in view of the second equality in \eqref{mudotg} and since the state constraint is of first order, i.e. \eqref{1order} holds.
\end{proof}

We refer to Remark \ref{RemJump2} regarding the link between previous Corollary \ref{CorJump} and Maurer \cite{Maurer77}.

\begin{remark}
\label{RemJump}
Let us illustrate by means of this example that the associated $\mu$ might have jumps at the initial and or at final times. In fact, consider the problem 
\benl
\begin{split}
\min\,\,& x_{1,T}+x_{2,T}; \\
 &\dot x_{1,t}=u_t\in [-1,1];\quad   \dot x_{2,t} = x_{1,t};\\
&g(x_{1,t})=-x_{1,t} \leq 0; \quad x_{1,0} =1, \; x_{2,0} =0.
\end{split}
\eenl
with $T=2,$ and note that
\benl
\uh_t :=
\left\{
\ba{rl}
-1 & \text{ on } [0,1],\\
0 & \text{ on } (1,2]
\ea
\right.
\eenl
is optimal for it.
Over $[0,1)$ the state constraint is not active, and on $[1,2]$ it is.
The Hamiltonian here is 
$H = p_1u + p_2 x_1,$ the costate being $p=(p_1,p_2).$
We obtain
$\dot p_2=0$, so that $p_{2,t}\equiv p_{2,T}=1$,  and
$-\dd p_1 = \dd t - \dd \mu$ with boundary condition $p_1(1)=1.$ Furthermore, on the constraint arc $(1,2)$ we necessarily have $H_u=0,$ leading to $p_1=0.$
We conclude that  $[p_T] = 1$ and therefore, due to \eqref{stateconstraint4}, $[\mu_T]=1,$ this is, $\mu$ jumps at the final time.
\end{remark}

\subsection{Arcs and junction points}\label{SecArcs}
%-------------------------------------
The {\em contact} set associated with the state constraint is defined as
\be
C := \{ t\in [0,T]:\; g(\xh_t)=0 \}.
\ee
% \eqref{uhuk.l3vh_0}. 
For $0 \leq a < b \leq T$, we say that $(a,b)$ is a 
{\em (maximal) active arc} for the state constraint,
or a {\em $C$ arc,} 
if $C$ contains $(a,b)$, but no open interval in which 
$(a,b)$ is strictly contained. Note that, since $t \mapsto g(\xh_t)$ is continuous, 
the set $C$ consists of a countable union of arcs, which can be ordered by size.
We say that 
$\tau\in (0,T)$ is a {\em junction point of the state constraint} 
if it is the extreme point of a $C$ arc.

We give similar definitions for the control constraint, paying
attention to the fact that the control variable is not
continuous, and is defined only almost everywhere.
So, we define the contact and
interior sets for the control bounds as
\be
\left\{ 
\begin{split}
&B_- := \{ t\in [0,T]:\; \uh_t = \umin  \},
\\
&B_+ := \{ t\in [0,T]:\; \uh_t = \umax  \},
\end{split}
\right.
\ee
and set $B:=B_- \cup B_+.$
We shall make clear that if $\umin = - \infty$ then $B_- = \emptyset$ and, analogously, if $\umax=+\infty$ then $B_+ = \emptyset.$
These sets are defined up to a null measure set and they can be identified with their characteristic functions.
We define arcs in a similar way, using representatives 
of the characteristic functions. That is, we say that
$(a,b)$ is a {\em $B_-$, $B_+$ arc} (or simply $B$ arc if we do not want to precise in which bound $\uh$ lies) if $(a,b)$  is included, up to a null measure set, in  $B_-$, $B_+,$ (in $B_-$ or $B_+$) respectively,
but no open interval strictly containing $(a,b)$ is.
% We call {\em junctions times of the control constraint} the 
% boundary of $B_-,B_+$ arcs in $(0,T)$. 
Finally, let $S$ denote the {\em singular set} 
\be
S:=\{ t\in [0,T]: u_{\min} < \uh_t < u_{\max} \text{ and } g(\xh_t) <0 \}.
\ee
{\em Junction times} are in general points $\tau \in (0,T)$ at which the trajectory $(\xh,\uh)$ switches from one type of arc ($B_-,B_+,C$ or $S$) to another type. We may have, for instance, {\em $CS$ junctions, $BC$ junctions, etc. } With {\em $BC$ control} we refer to  a control that is a concatenation of a bang arc and a constrained arc, and we extend this notation for any finite sequence, i.e. $SC,$ $SBC,$ etc.

\begin{remark}
\label{RemJump2}
The result in Corollary \ref{CorJump} above was proved by Maurer
\cite{Maurer77} at times $\tau \in (0,T)$ being junction points of the
state constraint, and 
extended to state constraints of any order.
\end{remark}

Consider the following {\em geometric hypotheses} on the control structure:
\be
\label{chypradn}
\left\{ \ba{cl}
 {\rm (i)} &\text{the interval $[0,T]$ is (up to a zero measure set)
the disjoint}
\\ & \text{union of finitely   many arcs of type $B$, $C$ and $S,$}
\\
{\rm (ii)} &\text{the control $\uh$ is at uniformly positive distance
  of the bounds } 
\\& u_{\min} \text{ and } u_{\max}, \text{ over $C$ and $S$ arcs,}
\\ 
{\rm (iii)} &
\text{the control $\uh$ is discontinuous at CS and SC junctions.}
\ea \right.
\ee
\begin{remark}
Condition \eqref{chypradn}(ii) implies the discontinuity of the
control at
 $BS$ and $SB$ junctions.
Therefore, \eqref{chypradn} yields the discontinuity of the
control at any junction. 
\end{remark}

Observe that the example in Remark \ref{RemJump} above 
verifies the structural hypothesis \eqref{chypradn}.

Let us note that we can rewrite the condition \eqref{1order} of first order state constraint as
\be
\label{1order'}
g'(\xh_t) f_1(\xh_t) \neq 0,\quad \text{on } C.
\ee
From $g(\xh_t)=0$ on $C,$ we get
\be
 \label{dtg0}
 0 = \ddt g(\xh_t) = g'(\xh_t) \big( f_0(\xh_t) + \uh_t f_1(\xh_t) \big),\quad \text{on } C,
 \ee
 and, whenever \eqref{1order'} holds,   we obtain that 
\be
 \label{uinC}
 \uh_t=  -\frac{g'(\xh_t)f_0(\xh_t)}{g'(\xh_t)f_1(\xh_t)},\quad \text{on } C.
 \ee
 
 {\em In the remainder of the article we assume that $(\xh,\uh)$ satisfies the geometric hypotheses \eqref{chypradn} and that the state constraint is of first order, that is,  \eqref{1order'} holds true.}

\subsection{About $\dd \mu$}\label{diffopcosec}
%---------------------------------------------------

Observe that, along a constrained arc $(a,b)$, over which 
$u_{\min} < \uh_t < u_{\max}$ a.e., in view of the minimum condition
\eqref{lmscac4a}, we have that 
$H_u=p f_1=0,$ for any $(\beta,\Psi,p,\dd\mu) \in \Lambda.$ Differentiating this equation with respect to time and using \eqref{costateeq}, we get
\be
\label{dHu0}
\begin{split}
0 & = \dd H_u = 
p[f_1,f_0] \dd t- g' f_1\dd\mu.
\end{split}
\ee
Thus, since the state constraint is of first order, $\dd\mu$ has a density $\nu\geq 0$ over $C$ given by the absolutely continuous function
\be
\label{nu}
\nu:[0,T] \to \cR,\quad
t \mapsto \nu_t := \frac{p_t[f_1,f_0](\xh_t)}{g'(\xh_t)f_1(\xh_t)},
\ee
where $[X,Y]:= X'  Y - Y'
X$ denotes the {\em Lie bracket} associated to a pair of vector fields $X,Y : \cR^n \to \cR^n.$

\section{Second order necessary conditions}\label{SecSOC}
%-------------------------------------------

In this section we state second order necessary conditions for weak optimality of problem \eqref{P}. We start by defining the cones of critical directions and giving necessary conditions, obtained by applying the abstract result in Theorem \ref{appt2} in the Appendix. Afterwards, we present the Goh transformation \cite{Goh66} and we state second order conditions in the transformed variables.

\subsection{Critical directions}
%------
 Let us extend the use of  $z[v,z^0]$ to
denote the solution of the linearized state equation \eqref{lineq}-\eqref{lineq0} for $(v,z^0) \in L^2 \times\cR^n ,$ and let us write $T_\Phi$  to refer to the {\em tangent cone} to $\{ 0 \}_{\RR^{n_1}} \times \RR^{n_2}_-$ at the point $(\xh_0,\xh_T)$,
 given by 
\benl
T_\Phi :=  \{0\}_{\RR^{n_1}} \times \{ \eta\in\RR^{n_2}; \; 
\eta_i\leq 0 : \text{if $\Phi_{n_1+i}(\xh_0,\xh_T)=0$,
$i=1,\ldots,n_2$} \}.
\eenl
For $v \in L^2$ and $z_0 \in \cR^n,$ consider the {\em linearization of the cost and endpoint constraints}
\be
\label{linPhi0}
\left\{
\ba{l}
\phi'(\xh_0,\xh_T)(z_0,z_T) \leq0,\\
\Phi'(\xh_0,\xh_T)(z_0,z_T) \in T_\Phi,
\ea 
\right.
\ee
where $z:=z[v,z_0].$
Define the {\em critical cone} as the set
\be
\C := 
\left\{ 
\begin{split}
&(v,z)\in L^\infty \times (W^{1,\infty})^n : (v,z) \text{ satisfies \eqref{lineq} and \eqref{linPhi0}},\\
&v_t\geq 0 \,\text{ a.e. on } B_-,\, v_t\leq0\, \text{ a.e. on } B_+,\quad g'(\xh_t) z_t \leq 0\ \text{ on } C
\end{split}
\right\}. 
\ee      
Define the {\em strict critical cone,} 
 \be
 \label{CS}
\C_S := 
\left\{ 
(v,z)\in \C: 
v_t = 0 \, \text{ a.e. on } B_\pm,\ g'(\xh_t) z_t =0\ \text{ on } C
\right\}. 
\ee    
Note that the strict critical cone is a polyhedron of a closed subspace of $L^\infty\times (W^{1,\infty})^n.$  
Consider 
the following  {\em weak complementarity condition}:  
there exists a Lagrange multiplier $(\beta,\Psi,p ,\dd\mu) \in
\Lambda$ (defined in \eqref{Lambda}) such that 
\be
\label{weakcomp}
\left\{
\ba{l}
\text{(i)}\ H_{u}(\uh,\xh,p) \neq 0,\quad \text{a.e.  over }\,B, \\
\text{(ii) the support of $\dd\mu$ is $C.$}
\ea
\right.
\ee 
We have the following identity:

\begin{proposition}
Assume that the weak complementarity condition \eqref{weakcomp} holds, then the critical and strict critical cones coincide, this is $\C = \C_S.$
\end{proposition}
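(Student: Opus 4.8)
The inclusion $\C_S \subseteq \C$ is immediate from the definitions, so the entire content is the reverse inclusion $\C \subseteq \C_S$: I must show that for every critical direction $(v,z) \in \C$ the sign constraints defining $\C$ are in fact saturated, i.e. $v_t = 0$ a.e. on $B_\pm$ and $g'(\xh_t) z_t = 0$ on $C$. The plan is to exploit the complementarity between a fixed multiplier $(\beta,\Psi,p,\dd\mu) \in \Lambda$ satisfying \eqref{weakcomp} and the critical direction, through a single duality identity. First I would fix such a multiplier and compute $\dd(p_t z_t)$ from the costate equation \eqref{costateeq} (with $f_x = A_t$ by \eqref{AB}) and the linearized state equation \eqref{lineq}. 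Since $z$ is Lipschitz and $p$ is $BV$, integration by parts is legitimate and the $A_t z_t$ terms cancel, leaving
\[
\dd(p_t z_t) = H_u(t)\, v_t \,\dd t - g'(\xh_t) z_t\, \dd \mu_t,
\]
where $H_u(t) = p_t f_1(\xh_t)$. Integrating over $[0,T]$ and using the transversality condition \eqref{lmscac7}, the endpoint term becomes $p_T z_T - p_0 z_0 = D\ell^{\beta,\Psi}(\xh_0,\xh_T)(z_0,z_T) = \beta\, \phi'(\xh_0,\xh_T)(z_0,z_T) + \Psi\, \Phi'(\xh_0,\xh_T)(z_0,z_T)$.

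Next I would verify that this endpoint term is nonpositive: $\beta \geq 0$ and $\phi'(z_0,z_T) \leq 0$ by \eqref{linPhi0}, while $\Phi'(z_0,z_T) \in T_\Phi$ by \eqref{linPhi0} and $\Psi \in N_\Phi$ is polar to $T_\Phi$ (from \eqref{NPhi} and the product structure of $K_\Phi$), so $\Psi\, \Phi'(z_0,z_T) \leq 0$. Hence the identity yields
\[
\int_0^T H_u(t)\, v_t\, \dd t \;-\; \int_0^T g'(\xh_t) z_t\, \dd\mu_t \;\leq\; 0.
\]
The crux is then to show that each of the two terms on the left is separately nonnegative, which forces both to vanish. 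For the first term, the control is interior on $C$ (hypothesis \eqref{chypradn}(ii)) and on $S$ (by definition of $S$), so $H_u(t) = 0$ there by \eqref{lmscac4a} and only $B_\pm$ contribute; on $B_-$ the minimum condition \eqref{lmscac4a} gives $H_u \geq 0$, strict a.e. by \eqref{weakcomp}(i), while $v \geq 0$ by criticality, and symmetrically on $B_+$, so $H_u(t) v_t \geq 0$ a.e. For the second term, $\dd\mu$ is supported on $C$ by \eqref{weakcomp}(ii), where $g'(\xh_t)z_t \leq 0$ by criticality and $\dd\mu \geq 0$, so $-\int_0^T g'(\xh_t) z_t\, \dd\mu_t \geq 0$.

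From the vanishing of the first integral together with $H_u(t) \neq 0$ a.e. on $B$ I conclude $v_t = 0$ a.e. on $B_\pm$. The main obstacle is the second conclusion: the vanishing of $\int_C g'(\xh_t) z_t\, \dd\mu_t$ with a nonpositive integrand only gives $g'(\xh_t) z_t = 0$ for $\dd\mu$-almost every $t$, and I must upgrade this to $g'(\xh_t)z_t = 0$ at \emph{every} point of $C$. Here I would use that the support of $\dd\mu$ is exactly $C$ together with the continuity of $t \mapsto g'(\xh_t) z_t$: if this function were strictly negative at some $t_0 \in C$, it would stay bounded away from $0$ on a neighborhood $U$ of $t_0$, and since $t_0$ lies in the support of $\dd\mu$ one has $\dd\mu(U) > 0$, whence $\int_C g'(\xh_t) z_t\, \dd\mu_t < 0$, a contradiction. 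This gives $g'(\xh_t) z_t = 0$ on all of $C$, so $(v,z) \in \C_S$ and the two cones coincide.
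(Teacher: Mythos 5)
Your proof is correct. One thing you could not have known: the paper states this proposition without any proof, so there is no argument of the authors to compare yours against --- what you have written fills a gap the paper leaves to the reader. Your route is the natural one and is fully consistent with the paper's own toolkit: the identity you obtain by integrating $\dd(p_t z_t)$ is precisely equation \eqref{eqintg'}, which the paper proves later as Proposition \ref{Propintg'} for its sufficiency analysis, and the rest of your argument is the expected complementarity reasoning (the endpoint term is nonpositive because $\Psi$ is polar to $T_\Phi$ and $\beta\,\phi'(\xh_0,\xh_T)(z_0,z_T)\leq 0$). The two places where real care is needed are both handled properly. First, the reduction of $\int_0^T H_u(t)v_t\,\dd t$ to the $B$ arcs relies on the control being interior on $C$ and $S$ arcs, i.e.\ on the standing structural hypothesis \eqref{chypradn}(ii), and the strict sign a.e.\ on $B$ comes from \eqref{weakcomp}(i); together with the sign constraints on $v$ in $\C$ this forces $v=0$ a.e.\ on $B_\pm$. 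Second --- and this is the step most easily botched --- the vanishing of $\int_0^T g'(\xh_t)z_t\,\dd\mu_t$ only yields $g'(\xh_t)z_t=0$ for $\dd\mu$-almost every $t$; your upgrade to every $t\in C$, using the continuity of $t\mapsto g'(\xh_t)z_t$ together with the fact that \eqref{weakcomp}(ii) makes every point of $C$ a support point of $\dd\mu$ (so every relative neighborhood of such a point has positive $\dd\mu$-measure), is exactly the argument for which the ``support equals $C$'' half of weak complementarity is indispensable, and it is correct.
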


\subsection{Radiality of critical directions}
\label{radialtc}

In order to prove the necessary condition of Theorem \ref{SONCQ} below, we make use of the abstract result in Theorem \ref{appt2} of the Appendix and of its related concepts. We consider in particular {\em radial critical directions} as given in Definition \ref{tool_ocso.d2} of
Section \ref{tool_ocso}. For our optimal control problem, an element $(v,z)$ of  $ \C$ is said to be {\em radial} 
if and only if, for some $\sigma>0,$ the following conditions are satisfied:
\be
\label{prrad.p_1}
\umin \leq \uh_t +\sigma v_t \leq \umax, \quad \text{a.e. on } [0,T],
\ee
\be
\label{prrad.p_2}
g(\xh_t)+\sigma g'(\xh_t) z_t \leq 0,\quad \text{for all } t\in [0,T].
\ee

Recall hypotheses \eqref{chypradn} and \eqref{1order'} on the control structure and the order of the state constraint, respectively.
\begin{proposition} 
\label{PropRadial}
Any critical direction (in $\C$) is radial. 
\end{proposition}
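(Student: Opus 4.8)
The plan is to establish the two radiality inequalities \eqref{prrad.p_1} and \eqref{prrad.p_2} separately, producing for each a uniform upper threshold on $\sigma$. Since by \eqref{chypradn}(i) there are only finitely many arcs, and hence finitely many junctions, it will suffice to obtain finitely many positive thresholds and take their minimum. Note that radiality is a purely primal property of $(v,z)\in\C$ relative to the nominal trajectory, so no multiplier is involved.

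For the control inequality \eqref{prrad.p_1} I would argue arc by arc, using that $v\in L^\infty$. On $C$ and $S$ arcs, hypothesis \eqref{chypradn}(ii) yields $\delta>0$ with $\umin+\delta\le\uh_t\le\umax-\delta$, so any $\sigma\le\delta/\|v\|_\infty$ gives $\umin\le\uh_t+\sigma v_t\le\umax$. On a $B_-$ arc one has $\uh_t=\umin$ and, by definition of $\C$, $v_t\ge0$, so the lower bound holds for every $\sigma$, while (when $\umax$ is finite) the upper bound $\umin+\sigma v_t\le\umax$ holds for $\sigma\le(\umax-\umin)/\|v\|_\infty$; the $B_+$ case is symmetric. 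Hence \eqref{prrad.p_1} holds for all sufficiently small $\sigma$.

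The substantive part is \eqref{prrad.p_2}. The key reduction is that the nominal trajectory is feasible, so $g(\xh_t)\le0$ everywhere, and therefore $g(\xh_t)+\sigma g'(\xh_t)z_t\le0$ holds automatically, for every $\sigma>0$, wherever $\zeta_t:=g'(\xh_t)z_t\le0$. Since $\zeta_t\le0$ on $C$ by the definition of $\C$, the inequality is trivial on $C$ arcs, and it remains only to control the open set $Z^+:=\{t:\zeta_t>0\}$, which is contained in the (finite) union of non-$C$ arcs and on which $g(\xh_t)<0$ strictly. There we need $\sigma\le|g(\xh_t)|/\zeta_t$, and the task is to bound this ratio below. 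The function $\zeta$ is Lipschitz (because $z\in(W^{1,\infty})^n$ and $g'$ is $C^1$ along the bounded curve $\xh$) and hence bounded, say $\zeta\le M$. On any compact subset of $Z^+$ disjoint from $C$, $g(\xh_\cdot)$ attains a negative maximum, so the ratio is bounded below there; the whole difficulty is concentrated near the junction points, where $g(\xh_t)\to0$.

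The crux is the behaviour at a junction $\tau$ adjacent to a $C$ arc; let the ``inactive side'' be the side on which $g(\xh_\cdot)<0$ near $\tau$. By \eqref{dtg0} we have $\ddt g(\xh_t)\equiv0$ on the $C$ side, so the jump relation \eqref{uhuk.l1_1}, $\left[\ddt g(\xh_\tau)\right]=[\uh_\tau]\,g'(\xh_\tau)f_1(\xh_\tau)$, shows that the one-sided derivative of $g(\xh_\cdot)$ on the inactive side equals $\pm[\uh_\tau]\,g'(\xh_\tau)f_1(\xh_\tau)$. This is nonzero: $[\uh_\tau]\neq0$ since the control is discontinuous at every junction (by \eqref{chypradn} and the remark following it), and $g'(\xh_\tau)f_1(\xh_\tau)\neq0$ by the first-order condition \eqref{1order'}. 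Feasibility $g(\xh_\cdot)\le0$ forces the sign so that $g(\xh_\cdot)$ strictly decreases away from the constraint; using the continuity of $t\mapsto\ddt g(\xh_t)$ within the arc (the control being constant on $B$ arcs, given by the feedback \eqref{uinC} on $C$ arcs, and by the singular feedback law on $S$ arcs), the one-sided limit upgrades to an a.e.\ bound on a one-sided neighborhood, giving linear decay $g(\xh_t)\le-c\,|t-\tau|$ for some $c>0$. On the other hand $\zeta$ is continuous with $\zeta_\tau\le0$ (limit from the $C$ side); if $\zeta_\tau<0$ then $Z^+$ is empty near $\tau$ and there is nothing to check, while if $\zeta_\tau=0$ the Lipschitz bound gives $\zeta_t\le L\,|t-\tau|$. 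In either case $|g(\xh_t)|/\zeta_t\ge c/L$ on $Z^+$ near $\tau$. Finally, patching the finitely many junction neighborhoods (each with its threshold $c/L$) together with the uniform bound $\delta/M$ coming from the complementary compact set $\overline{Z^+}\setminus\bigcup_\tau N_\tau$, which is disjoint from $C$ and hence where $g(\xh_\cdot)$ has a negative maximum, and intersecting with the control threshold of the second paragraph, produces a single $\sigma>0$. The main obstacle is precisely this junction analysis: matching the linear vanishing of the constraint margin $-g(\xh_t)$ against the at-most-linear growth of $\zeta_t$, which is exactly where the first-order assumption \eqref{1order'} and the control discontinuity at junctions enter.
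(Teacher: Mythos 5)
Your proof is correct and takes essentially the same approach as the paper's: the control bound \eqref{prrad.p_1} is dispatched arc by arc using \eqref{chypradn}(ii), and the substantive state-constraint bound \eqref{prrad.p_2} is reduced to the junction analysis, where the nonzero jump of $\ddt g(\xh_t)$ (coming from the control discontinuity at junctions together with \eqref{1order'}) yields the linear decay $g(\xh_t)\le -c_1\delta_t$ near $C$, while Lipschitz continuity and nonpositivity of $g'(\xh_t)z_t$ on $C$ yield $g'(\xh_t)z_t\le c_2\delta_t$, so that any $\sigma<c_1/c_2$ works. Your reduction to the set where $g'(\xh_t)z_t>0$ and the explicit case distinction at junctions are just more detailed bookkeeping of the same argument.
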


\begin{proof} 
Let $(v,z)\in \C$.
Relation \eqref{prrad.p_1} holds for $\sigma>0$ small enough over $B$ arcs, and on $S$ and $C$ arcs by  \eqref{chypradn}(ii).
Relation \eqref{prrad.p_2} trivially holds over $C$ arcs and, since
the state constraint is not active over $B$ and $S$ arcs, it does also
hold over these arcs, except perhaps in the vicinity of entry or exit points to $C$ arcs.  
For $t\in [0,T]$, let $\delta_t$ denote the distance between
$t$ and $C.$  
By \eqref{chypradn}(ii)-(iii) and \eqref{1order'},
$\ddt g(\xh_t)$ has a jump at the entry and exit points of any $C$ arc.
Let us check that, for $\eps>0$ and 
$\sigma>0$ small enough, we have 
$g(\xh_t)  + \sigma g'(\xh_t) z_t \leq 0$ for all 
$t\in [0,T]$ such that $\delta_t \in (0, \eps).$ 
For such $t$, 
reducing $\eps>0$ if necessary,  we have that 
$g(\xh_t) \leq - c_1 \delta_t$.
On the other hand, since 
$g'(\xh_t) z_t$ is Lipschitz continuous and nonpositive over C arcs,
we have that
$g'(\xh_t) z_t \leq c_2 \delta_t$, 
where $c_2>0$ depends on $v.$
Therefore,
$g(\xh_t)  + \sigma g'(\xh_t) z_t \leq (\sigma c_2-c_1) \delta_t< 0$
as soon as $\sigma < c_1/c_2$.
The conclusion follows.
\end{proof}

We next give an example of a problem in which the  optimal control
is not discontinuous at the junction points of C arcs,
but whose associated critical cone 
is nevertheless radial since it reduces to $\{0\}.$

\begin{remark}
\label{CBcont}
Let us consider the problem
\be
\begin{split}
& \min \int_0^2 x_{1,t}\dd t, \\
& \dot{x}_{1,t} =u_t \in [-1,1],
\;\; 
 \dot{x}_{2,t} =1, \; \text{a.e. on $[0,T],$}
\\
& x_{1,0} = 1,\, x_{2,0} =0; \;\;
-(x_{2,t}-1)^2 - x_{1,t} \leq 0 \; \text{over $[0,T]$}.
\end{split}
\ee
Notice that $x_{2,t} = t$ and that the state
constraint is of first order since
$g(x) := -(x_2-1)^2 - x_1$ satisfies 
$
\ddt g(x_t) = -2(x_{2,t}-1) -u+t.
$
Thus,  $u_t = -2(t-1)$ on a constraint arc.
It is easy to see that the optimal control is of type $B_-CB_-.$ More precisely, 
\be
\uh_t =
\left\{
\ba{cl}
-1, & \quad t\in[0,1], \\
-2(t-1), & \quad t\in(1,3/2],\\
-1,  & \quad t\in (3/2,2]. \
\ea
\right.
\ee
Thus, $\uh$ is continuous at the junction time $t=3/2.$ 
Yet, since no singular arc occurs, the critical cone reduces to $\{0\}$ and, therefore, 
 any critical direction is radial.
\end{remark}

\subsection{Statement of second order necessary conditions} \label{lmsonc-h}
%-------------------------------------------
Next we state a second order necessary condition in terms of the Hessian of the Lagrangian $\LL$ (which respect to $(u,x)$), which is given by the quadratic form
\be
\label{Q}
Q := Q^0+ Q^E  + Q^g,
\ee
where 
\be
\label{quad}
\left\{ \begin{split}
Q^0(v,z,\lambda) &:= \int_0^T \big( z_t^\top\, H_{xx}\,z_t + 2 v_t H_{ux}\,z_t \big)
\dd t,
\\
Q^E(v,z,\lambda)&:= D^2 \ell^{\beta,\Psi}(z_0,z_T)^2,
% + [\mu(0)] g''(\xb_0)(z_0)^2 + [\mu(T)] g''(\xb_T)(z_T)^2, 
\\
Q^g(v,z,\lambda) &:= \int_0^T z_t^\top\, g''\,z_t  \dd \mu_t,
\end{split}
 \right.
\ee
We recall that the Lagrangian  $\LL$ was defined in \eqref{lmscac5}.

\begin{proposition} For every multiplier $\lambda \in \Lambda,$ we have
\be
D^2_{(u,x)^2} \LL(\uh,\xh,\lambda)(v,z)^2 = Q(v,z,\lambda),\quad \text{ for all } (v,z) \in \C.
\ee
\end{proposition}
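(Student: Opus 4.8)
The plan is to compute the second-order Fréchet derivative of $\LL(\cdot,\cdot,\lambda)$ at $(\uh,\xh)$ directly, treating $u$ and $x$ as \emph{independent} variables with $\lambda=(\beta,\Psi,p,\dd\mu)$ held fixed, and to match the result term by term against the three pieces of $Q$ in \eqref{Q}--\eqref{quad}. Writing $\LL$ as in \eqref{lmscac5}, it is the sum of the endpoint term $\ell^{\beta,\Psi}(x_0,x_T)$, the integral term $\int_0^T p_t(f(u_t,x_t)-\dot x_t)\dd t$, and the state-constraint term $\int_0^T g(x_t)\dd\mu_t$; since $\lambda$ is fixed, each is a smooth functional of $(u,x)$, so I would differentiate the three separately and add up.

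For the endpoint term, the chain rule gives immediately $D^2_{(u,x)^2}\ell^{\beta,\Psi}(\uh,\xh)(v,z)^2 = D^2\ell^{\beta,\Psi}(\xh_0,\xh_T)(z_0,z_T)^2 = Q^E(v,z,\lambda)$, because this term depends on $(u,x)$ only through $(x_0,x_T)$, whose variation in the direction $(v,z)$ is $(z_0,z_T)$. For the integral term, I would first split $\int_0^T p_t(f(u_t,x_t)-\dot x_t)\dd t = \int_0^T H(u_t,x_t,p_t)\dd t - \int_0^T p_t\dot x_t\dd t$ using the definition \eqref{defpreham} of $H$. The second piece is affine in $x$ and independent of $u$, so its second derivative vanishes. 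For the first piece, the crucial structural fact is that $f(u,x)=f_0(x)+uf_1(x)$ is affine in $u$, whence $H_{uu}=p\,f_{uu}=0$; differentiating twice under the integral sign therefore leaves only $\int_0^T\big(z_t^\top H_{xx}z_t + 2v_t H_{ux}z_t\big)\dd t = Q^0(v,z,\lambda)$, with $H_{xx}=p(f_0''+uf_1'')$ and $H_{ux}=p f_1'$ evaluated at the nominal pair. This is exactly where the control-affine hypothesis is used, and it is the reason no $v^2$ term survives in $Q^0$.

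For the state-constraint term, since $z\in(W^{1,\infty})^n$ is continuous, $\dd\mu\in\M$ is a finite Radon measure and $g\in C^2$, one may differentiate twice under the Stieltjes integral to obtain $\int_0^T z_t^\top g''(\xh_t) z_t\,\dd\mu_t = Q^g(v,z,\lambda)$. Adding the three contributions yields $Q^0+Q^E+Q^g=Q$ by \eqref{Q}, which proves the identity; note that it in fact holds for every $(v,z)\in L^\infty\times(W^{1,\infty})^n$ and, in particular, on $\C$.

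The only point requiring care is the justification of differentiation under the two integral signs. For the Lebesgue integral this poses no difficulty thanks to the $C^2$ smoothness and the Lipschitz bounds on $f_0,f_1$, which provide the dominating bounds needed to pass the derivatives inside. For the measure term one invokes the continuity of $z$ together with the finiteness of $\dd\mu$, so that $t\mapsto z_t^\top g''(\xh_t)z_t$ is bounded and continuous and hence $\dd\mu$-integrable; the boundedness of $\dd\mu$ for $\lambda\in\Lambda$ (in particular $\Lambda_1$) guarantees that the resulting quadratic form is well-defined. Thus the main obstacle is essentially bookkeeping rather than a genuine difficulty: it is the systematic identification of each second variation with $H_{xx}$, $H_{ux}$ and $g''$, and the use of $H_{uu}=0$.
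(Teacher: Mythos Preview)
Your proposal is correct; the paper states this proposition without proof, treating the identity as an immediate consequence of the definitions \eqref{lmscac5} and \eqref{Q}--\eqref{quad}, and your direct term-by-term computation of the Hessian (using $H_{uu}=0$ from the control-affine structure) is exactly the expected verification.
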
 

As a consequence of Proposition \ref{PropRadial} and Theorem \ref{appt2} in the Appendix,
the following result holds.
\begin{theorem}[Second order necessary condition]
\label{SONCQ}
Assume that $(\uh,\xh)$ is a weak minimum of problem \eqref{P}.
Then 
\be
\max_{\lambda \in \Lambda} Q(v,z,\lambda) \geq 0,\quad \text{ for all } (v,z) \in \C.
\ee
\end{theorem}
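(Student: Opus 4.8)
The plan is to recognize \eqref{P} as an instance of the abstract constrained minimization problem treated in the Appendix, and to read off the conclusion from Theorem \ref{appt2} once the dictionary between the concrete and abstract objects has been established. Concretely, I would take the decision variable to be $(u,x_0) \in L^\infty \times \cR^n$ (equivalently $(u,x)$, with $x$ determined by \eqref{bsbstateeq}), the cost to be $\phi(x_0,x_T)$, the equality constraints to be the $n_1$ endpoint relations $\bar\Phi_E = 0$, and the inequality constraints to be the control bounds \eqref{bsbcc}, the endpoint inequalities $\Phi_i \leq 0$ (for $n_1 < i \leq n_1+n_2$), and the state constraint \eqref{stateconstraint1}, the last viewed as a constraint valued in the cone of nonpositive continuous functions on $[0,T]$. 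The Lagrange multipliers of this abstract problem are exactly the elements of $\Lambda$ (with $\dd\mu$ the multiplier of the state constraint and $\Psi$ that of the endpoint constraints), and its Lagrangian is $\LL$ of \eqref{lmscac5}.

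Next I would verify that the abstract critical cone associated with \eqref{P} at $(\uh,\xh)$ coincides with $\C$: the linearized cost and endpoint conditions give \eqref{linPhi0}, the linearization of the control bounds gives the sign conditions $v_t \geq 0$ on $B_-$ and $v_t \leq 0$ on $B_+$, and the linearization of the state constraint, together with the complementarity built into $\Lambda$, gives $g'(\xh_t) z_t \leq 0$ on $C$; these are precisely the defining relations of $\C$. I would also check that the abstract notion of a radial critical direction from Definition \ref{tool_ocso.d2} unwinds, for this problem, into exactly the two feasibility-along-a-ray conditions \eqref{prrad.p_1}--\eqref{prrad.p_2}. With this identification in hand, Proposition \ref{PropRadial} asserts that \emph{every} element of $\C$ is radial, so the abstract theorem will apply to all of $\C$ rather than to a proper subset.

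With the framework matched, Theorem \ref{appt2} yields, for each radial critical direction, the inequality $\max_{\lambda \in \Lambda} D^2_{(u,x)^2}\LL(\uh,\xh,\lambda)(v,z)^2 \geq 0$. Invoking Proposition \ref{PropRadial} to cover all of $\C$, and then the identity $D^2_{(u,x)^2}\LL(\uh,\xh,\lambda)(v,z)^2 = Q(v,z,\lambda)$ established in the Proposition preceding the theorem, I would rewrite this as $\max_{\lambda \in \Lambda} Q(v,z,\lambda) \geq 0$ for all $(v,z) \in \C$, which is the claim.

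The main obstacle I anticipate is not the Hessian bookkeeping but the careful matching of the abstract hypotheses to the concrete problem, and in particular the treatment of the state constraint as an inequality in $C([0,T])$ (or $L^\infty$), whose nonpositive cone has empty interior, so that a naive second-order analysis would produce an extra curvature (sigma-) term arising from the second-order tangent set of the constraint set. The role of radiality, and hence of Proposition \ref{PropRadial} (which rests on the structural hypotheses \eqref{chypradn} and the first-order assumption \eqref{1order'}), is precisely to guarantee that this curvature term contributes nonpositively along every critical direction, so that the clean inequality $\max_{\lambda} Q \geq 0$ survives. Verifying that the data of \eqref{P} meet the regularity and qualification requirements under which Theorem \ref{appt2} is stated, and that the maximization over the full cone $\Lambda$ (allowing singular multipliers with $\beta = 0$) is the correct robust form when no constraint qualification is assumed in the hypotheses of the theorem, is the technical heart of the argument.
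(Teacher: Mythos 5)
Your plan is exactly the paper's proof: the paper obtains Theorem \ref{SONCQ} precisely by combining the Hessian identity $D^2_{(u,x)^2}\LL(\uh,\xh,\lambda)(v,z)^2 = Q(v,z,\lambda)$ on $\C$, Proposition \ref{PropRadial} (every critical direction is radial, hence quasi-radial), and the abstract dual condition of Theorem \ref{appt2}, with the unqualified case --- which Theorem \ref{appt2} does not cover --- disposed of, as the paper does explicitly in the proof of Theorem \ref{SONCOm}, by noting that failure of \eqref{tool_ocfo.qualifoc}(i) produces a singular multiplier $\lambda$ (with $\beta$, $p$, $\dd\mu$ zero) for which $-\lambda\in\Lambda$ as well, so the maximum is trivially nonnegative. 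One correction to your closing remark: the cone of nonpositive functions in $C([0,T])$ has nonempty interior (the strictly negative functions), as the abstract framework in fact requires of $K_I$; the Kawasaki curvature term you worry about stems from the non-polyhedrality of that cone, not from empty interior, and it is exactly this term that radiality (Proposition \ref{PropRadial}, resting on \eqref{chypradn} and \eqref{1order'}) eliminates.
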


\subsection{Goh transformation and primitives of critical directions} \label{lmpcd}
%-------------------------------------------
        
For $(v,z^0) \in L^\infty \times \cR^n,$ and $z:=z[v,z^0]$ being the solution of the linearized state equation \eqref{lineq}-\eqref{lineq0}, let us set, 
\be
\label{Goh}
y_t := \int_0^t v_s\dd s; \quad 
\xi_t := z_t - y_t f_1(\xh_t),\quad \text{ for } t\in[0,T]. 
\ee
This change of variables is called  {\em Goh transformation} \cite{Goh66}. Defining $E_t:= A_t f_1(\xh_t)- \ddt f_1(\xh_t)$ (where $A$ was defined in \eqref{AB}),
observe that $\xi$ is solution of 
\be
\label{eqxitab}
\dot \xi_t = A_t \xi_t + y_t E_t, 
\ee
on the interval $[0,T],$ with initial condition
$$ \xi_0 = z^0.$$
 
 Consider the set of {\em strict primitive critical directions}  
$$
\P_S := 
\left\{
(y,h,\xi)\in  W^{1,\infty}\times \cR \times
(W^{1,\infty})^n  : y_0=0,\
y_T=h,\ (\dot y, \xi + y f_1 )\in \C_S
\right\},
$$
and let $\overline{\P_S}$ denote its closure with respect to the $L^2 \times \cR \times (H^1)^n-$topology.
The final value $y_T$ of $y$ is involved in the definition of $\P_S$ since it becomes an independent variable when we consider its closure.
We provide a characterization of $\overline{\P}_S$ in Theorem \ref{CharP2} below.

Let us note that if $(v,z) \in \C_S$ is a strict critical direction, then $(y,\xi)$ 
given by Goh transform \eqref{Goh} satisfies the following conditions:
\be
\label{intcond}
\left\{ \ba{ll}
 {\rm (i)} &
 g'(\xh_t) ( \xi_t + y_t f_1(\xh_t) ) =0\ \text{on } C,
\\ {\rm (ii)} &
\text{$y$ is constant on each $B$ arc,} 
\ea\right.\ee 
and if $(v,z)$ satisfies the linearized endpoint 
relations \eqref{linPhi0},
%$\Phi'(\xh_0,\xh_T)(z_0,z_T) \in T_\Phi,$ 
then 
\be
\label{transend}
\left\{
\begin{split}
& \phi'(\xh_0,\xh_T) \big( \xi_0,\xi_T+h f_{1}(\xh_T) \big) \leq 0,\\
& \Phi'(\xh_0,\xh_T)\big( \xi_0,\xi_T+h f_{1}(\xh_T) \big) \in T_\Phi,
\end{split}
\right.
\ee
where we set $h:= y_T.$
In the sequel we let 
 $0=: \hat\tau_0 < \hat\tau_1 < \dots < \hat\tau_N :=T$ 
denote the union of the set of junction times with $\{0,T\}.$ 

A $B$ arc starting at time 0 
(respectively ending at time $T$) is called 
a $B_{0\pm}$ (respectively $B_{T\pm}$) {\em arc.}

\begin{proposition}
\label{LimCond}
Any $(y,h,\xi) \in \overline{\P_S}$
  verifies \eqref{intcond}-\eqref{transend} and
\be
\label{limitcond}
\left\{
\ba{ll}
 {\rm (i)} & \text{ $y$ is continuous at the $BC,$ $CB$ and $BB$ junctions,} \\
 {\rm (ii)} & \text{ $y_t=0,$ on $B_{0\pm}$ if a $B_{0\pm}$ arc exists,}\\
 {\rm (iii)} & \text{ $y_t=h,$ on $B_{T\pm}$ if a $B_{T\pm}$ arc exists,}\\
 {\rm (iv)} & \lim_{t \uparrow T} y_t = h,\ \text{if } T\in C.
 \ea
 \right.
 \ee
 \end{proposition}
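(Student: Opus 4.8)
The plan is to use the sequential description of the closure: since $\overline{\P_S}$ is taken in the metrizable space $L^2\times\RR\times(H^1)^n$, any $(y,h,\xi)\in\overline{\P_S}$ is a limit of a sequence $(y^k,h^k,\xi^k)\in\P_S$ with $y^k\to y$ in $L^2$, $h^k\to h$ in $\RR$ and $\xi^k\to\xi$ in $(H^1)^n$. The first thing I would record is that the one-dimensional embedding $H^1(0,T)\hookrightarrow C^0([0,T])$ makes $\xi^k\to\xi$ \emph{uniform}, so $\xi$ is continuous and $\xi^k_t\to\xi_t$ for every $t$; passing \eqref{eqxitab} to the limit in $L^2$ (using $A,E\in L^\infty$) moreover shows that $\xi$ solves the transformed linearized equation with the limiting $y$, so the limit stays in the right class. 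I would also invoke from the outset that, by \eqref{chypradn}(i), there are only finitely many arcs and hence finitely many junctions $0=\hat\tau_0<\dots<\hat\tau_N=T$, so each junction may be treated separately.

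The heart of the argument is to extract on each arc a \emph{rigid representative} of $y$ that is stable under $L^2$ limits. On a $C$ arc the first order condition \eqref{1order'} gives $g'(\xh_t)f_1(\xh_t)\neq0$, so \eqref{intcond}(i), valid for every $(y^k,h^k,\xi^k)\in\P_S$, can be solved as $y^k_t=\psi^k_t:=-g'(\xh_t)\xi^k_t/\big(g'(\xh_t)f_1(\xh_t)\big)$; since $\xi^k\to\xi$ uniformly and the denominator is continuous and bounded away from zero on the compact closure of the arc, $\psi^k\to\psi:=-g'(\xh)\xi/\big(g'(\xh)f_1(\xh)\big)$ uniformly, and in the limit $y$ coincides a.e. on each $C$ arc with the continuous function $\psi$, which extends continuously to the junctions; this simultaneously yields \eqref{intcond}(i) for the limit. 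On a $B$ arc each $y^k$ is constant (because $(\dot y^k,\xi^k+y^k f_1)\in\C_S$ forces $\dot y^k=0$ a.e. there), and the $L^2$ limit of constants on a fixed interval is again a constant a.e., giving \eqref{intcond}(ii). The endpoint relations \eqref{transend} pass to the limit directly, the values $\xi^k_0,\xi^k_T$ converging by uniform convergence and $h^k\to h$, so the cost inequality survives and membership in the closed cone $T_\Phi$ is preserved.

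For the junction relations \eqref{limitcond} I would exploit that each $y^k\in W^{1,\infty}$ is continuous on $[0,T]$, whereas $y$ is known only through the rigid representatives above. At a $BB$ junction the two adjacent constant values of $y^k$ agree by continuity of $y^k$, and the limit gives equal constants for $y$, i.e. continuity; at a $BC$ (or $CB$) junction $\tau$, writing $c^k$ for the constant value of $y^k$ on the $B$ arc, continuity of $y^k$ forces $c^k=\psi^k(\tau)$, and letting $k\to\infty$ (with $c^k\to c$ and $\psi^k(\tau)\to\psi(\tau)$) gives $c=\psi(\tau)$, the matching of the $B$-side constant with the one-sided limit from the $C$ arc; together these prove \eqref{limitcond}(i). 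For \eqref{limitcond}(ii) I use $y^k_0=0$, which forces the $B_{0\pm}$ constant $c^k=0$ and hence $y=0$ a.e. there in the limit; symmetrically $y^k_T=h^k$ gives \eqref{limitcond}(iii). Finally, if $T\in C$ then $T$ is the right endpoint of a $C$ arc, so $\lim_{t\uparrow T}y_t=\psi(T)$, while continuity of $y^k$ at $T$ gives $\psi^k(T)=y^k_T=h^k$; passing to the limit yields $\psi(T)=h$, which is \eqref{limitcond}(iv).

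The step I expect to be the main obstacle is precisely the tension between the weak ($L^2$) topology defining the closure, in which $y$ carries no pointwise information, and the pointwise/continuity conclusions of \eqref{limitcond}. The resolution, and the point demanding the most care, is that the structural constraints render $y$ rigid on each arc---an explicit continuous function of $\xi$ on $C$ arcs via the first order condition, and a constant on $B$ arcs---so that the relevant pointwise quantities at junctions ($\psi^k(\tau)$, the $B$-arc constants, and $h^k$) do converge, after which the continuity of the smooth approximants $y^k\in W^{1,\infty}$ transfers the matching conditions to the limit. Establishing the uniform convergence $\psi^k\to\psi$ up to the junction points, which rests on \eqref{1order'} holding on the closed contact set, is the technical crux.
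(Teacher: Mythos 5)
Your proof is correct, and its skeleton---sequential description of the closure, arc-by-arc rigidity of $y^k$, and transfer of the matching conditions through the continuity of each approximant $y^k\in W^{1,\infty}$---is the same as the paper's. The difference is the technical engine. The paper invokes the Ascoli--Arzel\`a theorem: it observes that if $(y^k)$ is equi-bounded and equi-Lipschitz on a closed arc then it converges uniformly there to a Lipschitz limit, and deduces constancy on $B$ arcs and continuity at $BC$, $CB$, $BB$ junctions, dismissing the remaining items as ``analogous reasoning''. You instead bypass compactness entirely: on $C$ arcs you solve \eqref{intcond}(i) for the approximants, $y^k_t=-g'(\xh_t)\xi^k_t/\big(g'(\xh_t)f_1(\xh_t)\big)$, and obtain uniform convergence of these rigid representatives from the embedding $H^1(0,T)\hookrightarrow C^0([0,T])$ together with \eqref{1order'}, while on $B$ arcs you use that $L^2$ limits of constants are constants. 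This buys two things: it supplies exactly the control (the explicit formula plus constancy) that one would need in order to verify the equi-Lipschitz hypothesis the paper's proof leaves implicit on $C$ arcs, and, since $(\xi^k)$ actually converges rather than being merely bounded, no subsequence extraction is needed. You also write out \eqref{limitcond}(ii)--(iv) and the passage to the limit in \eqref{eqxitab} and \eqref{transend} in full. One interpretive point worth recording: you read ``$T\in C$'' in \eqref{limitcond}(iv) as ``$T$ is the right endpoint of a $C$ arc''; this is the reading under which the claim is provable (if $T$ were only a contact point terminating an $S$ arc, $\lim_{t\uparrow T}y_t$ need not exist for elements of the closure, since $y$ carries no pointwise information on $S$ arcs), and it is the reading consistent with the structural hypotheses \eqref{chypradn} and with how the paper later uses the condition.
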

 
 \begin{proof}
Let $(y,h,\xi) \in\overline{\P_S}$ be the limit of a sequence 
$(y^k,y^k_T,\xi^k)_k \subset \P_S$ in the
$L^2\times \cR \times (H^1)^n-$topology. 
By the Ascoli-Arzel\`a theorem, if $(y^k)$ is 
equi-bounded and equi-Lipschitz continuous over an interval $[a,b],$   then $y^k\rar y$ uniformly over $[a,b]$ and the limit function $y$ is Lipschitz continuous on $[a,b].$
Consequently, we obtain that
$y$ is constant over B
arcs, and continuous at $BC,$ $CB$ and $BB$ junctions.
We can prove the other statements by an analogous reasoning. 
 \end{proof}

Define the set
\be
\label{PS2}
\P^2_S := 
\left\{
(y,h,\xi) \in L^2 \times \cR \times (H^1)^n:\,\text{ \eqref{eqxitab}-\eqref{limitcond} hold} \right\}.
\ee
Then the following characterization holds.

\begin{theorem}
\label{CharP2}
We have that  $\P^2_S=\overline{\P_S}$.
 \end{theorem}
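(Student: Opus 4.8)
The plan is to prove the equality by the two inclusions $\overline{\P_S}\subseteq\P^2_S$ and $\P^2_S\subseteq\overline{\P_S}$, the first being soft and the second being the genuine density statement. For $\overline{\P_S}\subseteq\P^2_S$ I would first observe that every $(y,h,\xi)\in\P_S$ already satisfies the defining relations of $\P^2_S$: equation \eqref{eqxitab} holds because $\xi$ arises from the Goh transform \eqref{Goh} of a critical direction; the relations \eqref{intcond} and \eqref{transend} are exactly the transcription into the $(y,\xi)$ variables of the membership $(\dot y,\xi+yf_1)\in\C_S$ together with \eqref{linPhi0}; and \eqref{limitcond} holds trivially for $y\in W^{1,\infty}$, since such a $y$ is continuous and satisfies $y_0=0$, $y_T=h$. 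It then remains to pass these relations to the limit in the $L^2\times\RR\times(H^1)^n$-topology. Conditions \eqref{intcond}--\eqref{transend} and \eqref{limitcond} are supplied by Proposition \ref{LimCond}, and \eqref{eqxitab} is stable under this convergence because it is a linear ODE: if $y^k\to y$ in $L^2$ and $\xi^k\to\xi$ in $H^1$ with $\dot\xi^k=A\xi^k+y^kE$, then $\dot\xi=A\xi+yE$. This yields $\overline{\P_S}\subseteq\P^2_S$.

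For the reverse inclusion I would fix $(y,h,\xi)\in\P^2_S$ and build a sequence $(y^k,y^k_T,\xi^k)\in\P_S$ converging to it. The structural remark is that once $y^k\in W^{1,\infty}$ is chosen, its companion $\xi^k$ is forced to solve \eqref{eqxitab} with a prescribed initial value, and membership $(\dot y^k,\xi^k+y^kf_1)\in\C_S$ reduces to four requirements: the linear dynamics (automatic), $\dot y^k\equiv0$ on each $B$ arc, the pointwise tangency $g'(\xh)(\xi^k+y^kf_1)=0$ on $C$, and the endpoint relations \eqref{transend}. Since by \eqref{chypradn} there are finitely many arcs, I would propagate $\xi^k$ from left to right, prescribing $y^k$ arc by arc. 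On $S$ arcs I would mollify $y$ into a Lipschitz $y^k\to y$ in $L^2$; on $B$ arcs I would take $y^k$ equal to a constant converging to the constant value of $y$ there; on $C$ arcs I would not prescribe $y^k$ directly but instead exploit the first-order condition \eqref{1order'} to solve the tangency relation as $y^k=-g'(\xh)\xi^k/\bigl(g'(\xh)f_1(\xh)\bigr)$, so that substituting into \eqref{eqxitab} closes the $C$-arc dynamics into a linear ODE for $\xi^k$ alone; then $\xi^k$ is smooth, $y^k$ is automatically Lipschitz, and the tangency holds exactly. Defining $\xi^k$ globally with $\xi^k_0=\xi_0$, an inductive Gronwall argument over the arcs gives $\xi^k\to\xi$ uniformly, hence in $H^1$, and then $y^k\to y$ in $L^2$ on every arc (on $C$ arcs this follows from the algebraic formula and the uniform convergence of $\xi^k$).

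The delicate point is the coupling: perturbing $y^k$ on one arc propagates through \eqref{eqxitab} to $\xi^k$ on all later arcs, where it can spoil the requirements not preserved by mollification, namely the continuity of $y^k$ at the $BC$, $CB$ and $BB$ junctions demanded by \eqref{limitcond}(i), the boundary conditions \eqref{limitcond}(ii)--(iv), and the endpoint relations \eqref{transend}. I would enforce \eqref{limitcond}(ii)--(iv) directly by setting $y^k_0=0$ and $y^k_T=h^k\to h$. The junction-continuity relations and the active (equality) endpoint constraints are finitely many linear scalar conditions on the construction; at a $BC$ junction, for instance, matching the preceding $B$-arc constant with $-g'(\xh)\xi^k/\bigl(g'(\xh)f_1(\xh)\bigr)$ amounts to a scalar affine fixed-point equation for that constant, solvable precisely when the junction is nondegenerate. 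I would restore all of these exactly by a finite-dimensional correction of the free data (the mollified profiles on the $S$ arcs and the initial value $\xi_0$), the strict inequalities in \eqref{transend} being automatically preserved for $k$ large. Establishing the solvability of this finite-dimensional system, for which the first-order hypothesis \eqref{1order'} and the control discontinuity at junctions in \eqref{chypradn} are the natural nondegeneracy ingredients, and controlling the norm of the correction so that $y^k\to y$ in $L^2$, $y^k_T\to h$ and $\xi^k\to\xi$ in $H^1$ are preserved, is the main obstacle of the proof.
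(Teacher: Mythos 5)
Your first inclusion $\overline{\P_S}\subseteq\P^2_S$ is fine and matches the paper (Proposition \ref{LimCond} plus stability of the linear equation \eqref{eqxitab} under the stated convergence). The gap is in the converse inclusion, and it sits exactly where you say it does: the finite-dimensional correction restoring continuity of $y^k$ at the $BC$, $CB$ and $BB$ junctions and the endpoint relations \eqref{transend} is never established, and the route you sketch for it cannot work as stated. You propose to solve, junction by junction, scalar affine fixed-point equations (e.g.\ matching a $B$-arc constant with the value $-g'(\xh)\xi^k/\bigl(g'(\xh)f_1(\xh)\bigr)$ forced on the following $C$ arc), and you yourself note these are ``solvable precisely when the junction is nondegenerate''. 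But the theorem assumes only \eqref{chypradn} and \eqref{1order'}; no such junction nondegeneracy is available, and an affine equation $c=\alpha+\beta c$ with $\beta=1$ and $\alpha\neq0$ is not excluded by these hypotheses. Moreover, because you insist that $\xi^k$ be continuous and propagate it from left to right, a perturbation on one arc couples to all downstream constraints, so the correction problem is genuinely global rather than a concatenation of local scalar equations; you acknowledge this is ``the main obstacle'' and leave it open, so the proof is incomplete precisely at its heart.

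The paper closes this step with two ideas absent from your proposal. First, it does not re-propagate anything on $B\cup C$: it keeps $(y_\eps,\xi_\eps)=(y,\xi)$ exactly there (the limit element already satisfies constancy on $B$ arcs and tangency on $C$ arcs), mollifies $y$ only on $S$ arcs with matched boundary values (using \cite[Lemma 8.1]{ABDL11}), and, crucially, allows $\xi_\eps$ to be \emph{discontinuous} at the junctions, i.e.\ it works in the larger space $\Z$ of piecewise $W^{1,\infty}$ functions $\xi$. The resulting element of $\Z$ then violates only finitely many affine conditions --- the jump conditions $\xi(\hat\tau_j-)=\xi(\hat\tau_j+)$ and the relations \eqref{transend} --- each by $o(1)$ (estimates \eqref{xijump}--\eqref{endjump2}). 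Second, since $\P_S$ is exactly the subset of $\Z$ cut out by this consistent finite system of affine equalities and inequalities, Hoffman's lemma \cite{Hof52} produces an element of $\P_S$ at distance $o(1)$, with no nondegeneracy assumption whatsoever. Your construction could in principle be repaired along the same lines --- your data-to-violations map is affine and your system is consistent, since the zero data solves it --- but that polyhedral error-bound argument is exactly the missing ingredient, not a technical remainder.
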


\begin{proof}
That $\overline{\P_S}  \subset \P^2_S$ follows from Proposition
\ref{LimCond}.
Let us prove the converse inclusion. 
Define  the linear space
\be
\Z :=\left\{
\begin{split}
& (y,y_T,\xi)\in    W^{1,\infty}  \times \cR \times  
\left( \prod_{j=1}^N W^{1,\infty}(\hat\tau_{j-1},\hat\tau_j)^n\right)\\
&y_0=0,\ \text{\eqref{eqxitab} holds at each }
(\hat\tau_{j-1},\hat\tau_j),\,\text{and \eqref{intcond}  holds} 
\end{split}
\right\},
\ee
that is obtained from $\P_S$ by removing 
condition \eqref{transend} and allowing
$\xi$  to be discontinuous
at the junctions $\hat\tau_j$, $j=1,\ldots N-1$. 

Let $(y,h,\xi) \in \P^2_S.$ For any $\eps>0,$ 
we now construct $(y_\eps,y_{\eps,T},\xi_\eps) \in \Z $ such that $y_{\eps,T}=h$ and
\be
\label{esteps}
\|y-y_\eps\|_2+\|\xi-\xi_\eps\|_\infty = o(1).
\ee
First set 
\be
y_{\eps,t} := y_t,\quad 
\xi_{\eps,t} := \xi_t,\quad 
\text{on } B\cup C.
\ee
On $S,$ define $y_\eps$ in such a way that  $y_\eps\in W^{1,\infty}(0,T),$ the values 
at the junction times are fixed in the following way:
\be
\label{yepsjunction}
\left\{
\begin{split}
& y_\eps(\hat\tau_j+) = y(\hat\tau_j-),\quad \text{if } \hat\tau_j>0 \text{ is an entry point of an $S$ arc,}\\
& y_\eps(\hat\tau_j-) = y(\hat\tau_j+),\quad \text{if } \hat\tau_j < T \text{ is an exit point of an $S$ arc,}\\
& y_{\eps,0} = 0, \quad \text{if }0\in S;\quad
  y_{\eps,T} = h, \quad \text{if }T\in S,
\end{split}
\right.
\ee
and $\|y-y_\eps\|_2 < \eps.$
Such an $y_\eps$ exists, see \cite[Lemma 8.1]{ABDL11}. 
Define $\xi_\eps$ over each $S$ arc
by integrating \eqref{eqxitab} over the respective arc with $y=y_\eps$ and 
 the initial condition 
$\xi_{\eps,\tau} = \xi_{\tau}$,
where $\tau$ denotes the entry point of the arc.
Then  $(y_\eps,y_{\eps,T},\xi_\eps) \in \Z$ 
satisfies the estimate \eqref{esteps}.
In particular, we have
\begin{gather}
\label{xijump}
|\xi_\eps(\hat\tau_j-)-\xi_\eps(\hat\tau_j+)|=
|\xi_\eps(\hat\tau_j-)-\xi(\hat\tau_j)|=
o(1),\quad \text{for all } j=1,\dots,N-1,\\
\label{endjump1}
|\phi'(\xh_0,\xh_T)(\xi_{\eps,0},\xi_{\eps,T}+f_1(\xh_T)y_{\eps,T}) -
\phi'(\xh_0,\xh_T)(\xi_{0},\xi_{T}+f_1(\xh_T) h) | = o(1),
\\
\label{endjump2}
|\Phi'(\xh_0,\xh_T)(\xi_{\eps,0},\xi_{\eps,T}+f_1(\xh_T)y_{\eps,T}) -
\Phi'(\xh_0,\xh_T)(\xi_{0},\xi_{T}+f_1(\xh_T) h) | = o(1).
\end{gather}

Notice that the cone $\P_S$ is obtained from $\Z$ by adding the constraints in \eqref{transend} and the continuity conditions
\be
\xi(\hat\tau_j-)-\xi(\hat\tau_j+)=0,\quad \text{for all } j=1,\dots,N-1.
\ee

 In view of Hoffman's lemma \cite{Hof52} and estimates \eqref{xijump}-\eqref{endjump2},
 we get that there exists $(\tilde y_\eps, \tilde y_{\eps,T}, \tilde\xi_\eps) \in \P_S$ such that
\be
\label{esteps2}
\|\tilde y_\eps-y_\eps\|_2+\|\tilde \xi_\eps-\xi_\eps\|_\infty = o(1).
\ee
Finally, from  \eqref{esteps} and \eqref{esteps2} we have that
\be
\|\tilde y_\eps-y\|_2+\|\tilde \xi_\eps-\xi\|_\infty = o(1),
\ee
and hence, the density of $\P_S$ in $\P_S^2$ follows.
\end{proof}

\subsection{Goh transformation on the Hessian of Lagrangian} \label{lmsonc-hh}
%-------------------------------------------

Next, we want to express each of the quadratic functions
in \eqref{Q}-\eqref{quad} as functions of $(y,h,\xi)$.
For the terms that are quadratic in $z$, it suffices to replace $z$
by $\xi+f_1(\xh) y$ as given in Goh transform. With this aim, set for $(y,h,\xi) \in L^2 \times \cR \times (H^1)^n$ and $\lambda := (\beta,\Psi,p,\dd\mu) \in \Lambda,$
\be
\begin{split}
\Om_T(y,h,\xi,\lambda) :=  &
2 h\, H_{ux,T}   \xi_T +  h\, H_{ux,T}  f_1(\xh_T) h,
\\
\Om^0(y,h,\xi,\lambda) :=  &
\int_0^T \big( \xi_t^\top H_{xx}\xi_t + 2y_t M \xi_t + y_t R  y_t
\big) \dd t,
\\
\Om^E (y,h,\xi,\lambda) :=
\,&\,D^2\ell^{\beta,\Psi} (\xi_0,\xi_T+f_1(\xh_T)h)^2,
\\
\Om^g(y,h,\xi,\lambda) :=&\, 
\int_0^T (\xi_t + f_{1}(\xh_t)y)^\top g''(\xh_t)(\xi_t + f_{1}(\xh_t)y)  \dd \mu_t,\\
\Om:=\,& \, \Om_T + \Om^0 + \Om^E + \Om^g,
\end{split}
\ee
with 
\be
\label{M}
M:= f_1^\top H_{xx} - \dot{H}_{ux} - H_{ux}A,
\ee
 \be
 \label{R}
 R:= f_1^\top H_{xx} f_1-2H_{ux}E- \frac{\dd}{\dd t}(H_{ux}f_1).
 \ee
 Here, when we omit the argument of $M,$ $R,$  $H$ or its derivatives, we mean that they are evaluated at $(\xh,\uh,\lambda).$ 
\begin{remark}
Easy computations show that $R$ does not depend on $u.$ More precisely, $R$ is given by
\be
R(\xh_t,\lambda_t) = p_t \big[ [f_0,f_1],f_1 \big] (\xh_t) + g'(\xh_t) f_1'(\xh_t)f_1(\xh_t) \nu_t,
\ee
where $\nu$ is the density of $\dd\mu$ given in \eqref{nu}.
\end{remark}

\begin{proposition}
\label{QOm}
Let $(v,z) \in L^2 \times (H^1)^n$ be a  solution of \eqref{lineq} and
let $(y,\xi)$ 
be defined by the Goh transformation \eqref{Goh}. Then, for any $\lambda \in \Lambda,$ 
\be
Q(v,z,\lambda) = \Om(y,y_T,\xi,\lambda).
\ee
\end{proposition}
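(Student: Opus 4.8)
The plan is to establish the identity $Q = \Om$ by substituting the Goh decomposition $z_t = \xi_t + y_t f_1(\xh_t)$ into each of the three pieces $Q^0$, $Q^E$, $Q^g$ making up $Q$ in \eqref{Q}--\eqref{quad}, and then matching them against $\Om_T + \Om^0 + \Om^E + \Om^g$. The terms $Q^E$ and $Q^g$ are immediate: since they are purely quadratic in $z$ (no $v$ appears), I simply replace $z_0$ by $\xi_0$ and $z_T$ by $\xi_T + f_1(\xh_T)h$ (recall $y_T = h$ and $y_0=0$, so $z_0 = \xi_0$), which reproduces $\Om^E$ and $\Om^g$ verbatim. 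The work is therefore concentrated in $Q^0$, which contains the genuinely mixed term $2 v_t H_{ux} z_t$ where $v = \dot y$ still appears explicitly.

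First I would expand $Q^0$ after substitution. The term $z^\top H_{xx} z$ becomes $(\xi + y f_1)^\top H_{xx}(\xi + y f_1) = \xi^\top H_{xx}\xi + 2 y f_1^\top H_{xx}\xi + y^2 f_1^\top H_{xx} f_1$. The cross term $2 v H_{ux} z = 2\dot y\, H_{ux}(\xi + y f_1) = 2\dot y\, H_{ux}\xi + \dot y\, \frac{\dd}{\dd t}(y^2) H_{ux} f_1$. The central difficulty is that the integrand now involves $\dot y$, whereas the target $\Om^0$ is expressed purely in terms of $y$ and $\xi$ through the coefficients $M$ and $R$ defined in \eqref{M}--\eqref{R}. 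The key device is integration by parts in time to eliminate $\dot y$: for the term $2\dot y\, H_{ux}\xi$ I would write it as $2\frac{\dd}{\dd t}(y\, H_{ux}\xi) - 2y\frac{\dd}{\dd t}(H_{ux}\xi)$, and expand $\frac{\dd}{\dd t}(H_{ux}\xi) = \dot H_{ux}\xi + H_{ux}\dot\xi = \dot H_{ux}\xi + H_{ux}(A\xi + y E)$ using the $\xi$-dynamics \eqref{eqxitab}. This is exactly what produces the coefficient $M = f_1^\top H_{xx} - \dot H_{ux} - H_{ux}A$ on the $2 y M\xi$ term and contributes the $-2 H_{ux}E$ part of $R$. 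Similarly the term $\dot y\,\frac{\dd}{\dd t}(y^2)H_{ux}f_1$, after integration by parts, produces $-\frac{\dd}{\dd t}(H_{ux}f_1)\,y^2$, furnishing the remaining piece of $R = f_1^\top H_{xx} f_1 - 2 H_{ux}E - \frac{\dd}{\dd t}(H_{ux}f_1)$.

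The integration-by-parts steps generate boundary terms evaluated at $t=0$ and $t=T$; these must be collected and shown to equal $\Om_T = 2h\, H_{ux,T}\xi_T + h\, H_{ux,T} f_1(\xh_T) h$. At $t=0$ the boundary contributions vanish because $y_0 = 0$, and at $t=T$ the surviving terms, using $y_T = h$, are precisely $2h\, H_{ux,T}\xi_T$ from the first integration by parts and $h^2 H_{ux,T}f_1(\xh_T)$ from the second, which together are $\Om_T$. The main obstacle is purely bookkeeping rather than conceptual: one must handle the integration by parts carefully under the regularity at hand --- $y\in L^2$ (or $W^{1,\infty}$ for the dense subset) and $\xi\in (H^1)^n$, with $H_{ux}$ and $f_1(\xh)$ Lipschitz --- so that all the time-derivatives and boundary evaluations are legitimate, and ensure the density argument of Theorem \ref{CharP2} lets one pass from the smooth case to general $(y,h,\xi)$. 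Once the boundary terms are matched to $\Om_T$ and the interior coefficients to $M$ and $R$, assembling $Q^0 + Q^E + Q^g = \Om^0 + \Om_T + \Om^E + \Om^g = \Om$ completes the identity.
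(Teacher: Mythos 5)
Your proposal is correct and follows essentially the same route as the paper's proof: substitute $z=\xi+f_1(\xh)y$ into $Q^0$, integrate the cross terms $2\int v H_{ux}\xi\,\dd t$ and $2\int v H_{ux}f_1 y\,\dd t$ by parts using the $\xi$-dynamics \eqref{eqxitab} to produce the coefficients $M$ and $R$ and the boundary term $\Om_T$, while $Q^E=\Om^E$ and $Q^g=\Om^g$ are immediate substitutions. (Only a typographical slip: the cross term is $\frac{\dd}{\dd t}(y^2)H_{ux}f_1$, not $\dot y\,\frac{\dd}{\dd t}(y^2)H_{ux}f_1$, but your subsequent integration by parts treats it correctly, and the appeal to the density argument of Theorem \ref{CharP2} is unnecessary since $y\in H^1$ already makes all manipulations legitimate.)
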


\begin{proof}
Take $(v,z)$ and $(y,\xi)$ as in the statement. It is straightforward to prove that 
\be
Q^E(v,z,\lambda)=\Om^E(y,y_T,\xi,\lambda),\ \text{ and }\ 
Q^g(v,z,\lambda)=\Om^g(y,y_T,\xi,\lambda).
\ee
In order to prove the equality between $Q^0$ and $\Om^0,$ let us replace each occurrence of $z$ in $Q^0,$ by its expression in the Goh transformation \eqref{Goh}, i.e. change $z$ by $\xi+f_1(\xh)y.$ The first term in $Q^0$ can be written as
\be
\label{Q00}
\int_0^T z^\top H_{xx} z \dd t
= \int_0^T (\xi+f_1y)^\top H_{xx} (\xi+f_1y) \dd t.
\ee
Let us consider the second term in $Q^0:$
\be
\label{Q0}
\begin{split}
\int_0^T  v H_{ux} (\xi+f_1y)  \dd t = \int_0^T \big( v H_{ux} \xi + v H_{ux} f_1 y \big) \dd t.
\end{split}
\ee
Integrating by parts the first term in previous equation we get
\be
\label{Q01}
\begin{split}
\int_0^T v H_{ux} \xi \dd t =&\,
\left[ y H_{ux} \xi \right]_0^T - \int_0^T y \big( \dot{H}_{ux} \xi + H_{ux} \dot{\xi} \big) \dd t \\
=&\, y_T H_{ux,T} \xi_T - \int_0^T  y \big( \dot{H}_{ux} \xi +H_{ux}A\xi + H_{ux} Ey)\dd t.
\end{split}
\ee
For the second term in the right hand-side of \eqref{Q0} we obtain
\be
\begin{split}
\int_0^T v H_{ux} f_1 y \dd t = &\, 
\left[ y H_{ux} f_1 y\right]_0^T - \int_0^T y \left( \frac{\dd}{\dd t}(H_{ux}f_1) y+ H_{ux}f_1 v \right)\dd t.
\end{split}
\ee
This identity yields the following
\be
\label{Q02}
\int_0^T v H_{ux} f_1 y \dd t = \half y_T H_{ux,T} f_1 y_T
-\half \int_0^T y^2 \frac{\dd}{\dd t}(H_{ux}f_1)  \, \dd t.
\ee
From \eqref{Q00}, \eqref{Q01} and \eqref{Q02} we get the desired result.
\end{proof}

\subsection{Second order necessary condition in the new variables}

We can obtain the following new necessary condition in the variables after Goh's transformation.

\begin{theorem}
\label{SONCOm}
If  $(\uh,\xh)$ is a weak minimum, then
\be
\max_{\lambda \in \Lambda} \Om (y,h,\xi,\lambda) \geq 0,\quad \text{for all } (y,h,\xi) \in \P^2_S.
\ee
\end{theorem}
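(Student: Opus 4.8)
The plan is to derive the statement from the second order necessary condition already obtained in the original variables (Theorem \ref{SONCQ}), transporting it through the Goh transformation and then passing to the closure $\P^2_S=\overline{\P_S}$ via the characterization in Theorem \ref{CharP2}.

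First I would establish the inequality on the (non-closed) cone $\P_S$. The Goh transform \eqref{Goh} sets up a bijection between $\C_S$ and $\P_S$: given $(v,z)\in\C_S$, the triple $(y,h,\xi)$ with $y:=\int_0^\cdot v_s\,\dd s$, $\xi:=z-yf_1(\xh)$ and $h:=y_T$ lies in $\P_S$, and conversely every element of $\P_S$ arises this way with $(v,z)=(\dot y,\xi+yf_1(\xh))$. Since $\C_S\subseteq\C$, Theorem \ref{SONCQ} gives $\max_{\lambda\in\Lambda}Q(v,z,\lambda)\geq 0$ for each $(v,z)\in\C_S$, and Proposition \ref{QOm} identifies $Q(v,z,\lambda)=\Om(y,y_T,\xi,\lambda)$. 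Hence $\max_{\lambda\in\Lambda}\Om(y,h,\xi,\lambda)\geq 0$ for all $(y,h,\xi)\in\P_S$. As $\Om$ is linear in $\lambda$ and $\Lambda$ is a cone, this amounts to the existence, for each such triple, of $\lambda$ in the weakly-$*$ compact base $\Lambda_1$ with $\Om(y,h,\xi,\lambda)\geq 0$.

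Next I would extend the inequality to $\P^2_S=\overline{\P_S}$ by a density and compactness argument. Take $(y,h,\xi)\in\P^2_S$ and a sequence $(y^k,h^k,\xi^k)\in\P_S$ converging to it in $L^2\times\cR\times(H^1)^n$ (Theorem \ref{CharP2}). For each $k$ choose $\lambda^k\in\Lambda_1$ with $\Om(y^k,h^k,\xi^k,\lambda^k)\geq 0$; by weak-$*$ compactness of $\Lambda_1$ extract a subsequence with $\lambda^k\to\bar\lambda\in\Lambda_1$ weakly-$*$. It then remains to prove $\Om(y^k,h^k,\xi^k,\lambda^k)\to\Om(y,h,\xi,\bar\lambda)$, which yields $\Om(y,h,\xi,\bar\lambda)\geq 0$, and hence the claim.

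The main obstacle is exactly this joint continuity of $\Om$, the delicate terms being those carrying the state-constraint measure, namely $\Om^g$ and the $\nu$-dependent part of $\Om^0$ (recall $R$ contains $g'f_1'f_1\,\nu$). The Sobolev embedding $(H^1)^n\hookrightarrow C([0,T])^n$ gives $\xi^k\to\xi$ uniformly and $\xi^k_T\to\xi_T$, so $\Om_T$, $\Om^E$ and the $\xi$-quadratic parts of $\Om^0$ pass to the limit once one knows $p^k\to\bar p$ and $\dd\mu^k\to\dd\bar\mu$, which follow from a Helly-type selection (the $p^k$ are of uniformly bounded variation on $\Lambda_1$) together with the weak-$*$ convergence in $\Lambda_1$. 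The trouble is that $y^k\to y$ only in $L^2$, while $y$ enters $\Om^g$ and the $\nu$-term pointwise against the measure. The resolution is that every $\dd\mu$ with $\lambda\in\Lambda$ is supported in $C$ (by complementarity \eqref{mucomsc}, since $g(\xh)\leq 0$ and $\dd\mu\geq0$), and that on $C$ the critical-cone relation \eqref{intcond}(i) together with the first order hypothesis \eqref{1order'} forces $y=-g'(\xh)\xi/(g'(\xh)f_1(\xh))$; thus on $C$ the function $y$ is continuous and $y^k\to y$ \emph{uniformly}. Consequently, on the support of the measures all integrands converge uniformly, which is precisely what is needed to pair with the weak-$*$ convergence $\dd\mu^k\to\dd\bar\mu$ (for $\Om^g$) and, via Helly plus dominated convergence with $\nu^k\to\bar\nu$ a.e., for the $\Om^0$ term; the limit conditions \eqref{limitcond} secure the correct boundary values of $y$ at the junctions and at $T$. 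Off $C$ the measures vanish and ordinary $L^2$-continuity of the remaining Lebesgue integrals closes the argument.
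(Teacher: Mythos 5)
Your overall route---prove the inequality on $\P_S$ via the Goh bijection together with Theorem \ref{SONCQ} and Proposition \ref{QOm}, then pass to the closure $\P^2_S=\overline{\P_S}$ (Theorem \ref{CharP2}) by extracting a weakly* convergent subsequence of multipliers chosen in a fixed normalized compact set---is the same as the paper's, and your discussion of the joint continuity of $\Om$ (the measures are supported in $C$, where \eqref{intcond}(i) and \eqref{1order'} force $y=-g'(\xh)\xi/(g'(\xh)f_1(\xh))$, hence uniform convergence of $y^k$ there) is actually more detailed than what the paper writes down. But there is a genuine gap at the pivot of your argument: you select the multipliers $\lambda^k$ in ``the weakly-$*$ compact base $\Lambda_1$''. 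Theorem \ref{SONCOm} assumes \emph{no} qualification condition, and without \eqref{tool_ocfo.qualifoc} the set $\Lambda_1$ need be neither a base of $\Lambda$ nor weakly* compact: the paper's first theorem in Section \ref{SecFramework} guarantees $\Lambda_0=\emptyset$ and compactness of $\Lambda_1$ \emph{only under} \eqref{tool_ocfo.qualifoc}. If the multiplier produced by Theorem \ref{SONCQ} for $(v^k,z^k)$ is singular ($\beta=0$), it cannot be rescaled into $\Lambda_1$ at all, so your choice of $\lambda^k\in\Lambda_1$ with $\Om(y^k,h^k,\xi^k,\lambda^k)\geq 0$ may simply not exist; and even when $\Lambda_0=\emptyset$, boundedness of $\Lambda_1$ still requires an argument.

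The paper closes this gap with a case distinction you are missing. If \eqref{tool_ocfo.qualifoc}(i) fails, i.e.\ $D\bar\Phi_E(\uh,\xh_0)$ is not onto, one takes a nonzero $\Psi_E\in[{\rm Im}\,D\bar\Phi_E(\uh,\xh_0)]^\perp$ and builds a singular multiplier $\lambda$ with $(\beta,\Psi_I,\dd\mu)=0$; since no sign-constrained components are used, $-\lambda$ is also a multiplier, and $\max\big(\Om(y,h,\xi,\lambda),\Om(y,h,\xi,-\lambda)\big)\geq 0$ settles the claim with no compactness needed. If \eqref{tool_ocfo.qualifoc}(i) holds, the paper does \emph{not} use $\Lambda_1$ but the auxiliary normalized set $\hat\Lambda_1$ from the Appendix, defined by the normalization $\beta+\la \lambda_I, G_I(\xh)-y_I\ra=1$ in \eqref{hatLambda}: Proposition \ref{propderiv.p} shows that the rescaling \eqref{maplambda} is a bijection between \emph{all} of $\Lambda$ and $\hat\Lambda_1$ (this normalization, unlike $\beta=1$, also captures multipliers with $\beta=0$), and Theorem \ref{tool_ocfo.t2} gives nonemptiness and boundedness of $\hat\Lambda_1$ under surjectivity of the equality-constraint derivative alone, whence weak* sequential compactness via Banach--Alaoglu and separability of $C([0,T])$. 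Your density and limit argument then goes through essentially verbatim once $\Lambda_1$ is replaced by $\hat\Lambda_1$ (noting that each $\bar\lambda_k$ with $(1,\bar\lambda_k)=\hat\lambda_k$ is a positive multiple of $\lambda^k$, so the sign of $Q$ is preserved) and is prefaced by the unqualified case.
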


\begin{proof}
Let us assume first that the qualification condition \eqref{tool_ocfo.qualifoc}(i) does not hold. Therefore,  there exists
a nonzero element $\Psi_E$ in $ [{\rm Im }\, D \bar\Phi_E(\uh,\xh_0)]^\perp.$ 
Hence, the multiplier $\lambda$ composed by such  $\Psi_E$, and having $(\beta,\Psi_I,\dd \mu):=0$ and the associated costate $p,$ is a Lagrange multiplier, as well as its opposite $-\lambda.$ Therefore, either $ \Om(y,h,\xi,\lambda)$ or $ \Om (y,h,\xi,-\lambda)$ is greater or equal zero, and the conclusion follows.

Let us now consider the case when \eqref{tool_ocfo.qualifoc}(i) holds true.
Then, the corresponding abstract problem \eqref{Pabs} (defined in the Appendix \ref{tool_ocfo}) verifies the qualification condition \eqref{qualif}(i). 
Recall the definition of $\hat\Lambda$ in \eqref{hatLambda}, and the corresponding set $\hat\Lambda_1$ (see \eqref{LambdaBeta}).
In view of Theorem \ref{tool_ocfo.t2}, $\hat\Lambda_1$ is non empty
and bounded. Furthermore, due to the
Banach-Alaoglu Theorem, and since the space of continuous functions in $[0,T]$ is separable, we get that $\hat\Lambda_1$ is weakly* sequentially compact. 

Consider now $(y,h,\xi) \in \P^2_S.$ 
By Theorem \ref{CharP2}, there exists a sequence $(y^k,y_T^k,\xi^k)$
in $\P_S$ such that
\be
(y^k,y_T^k,\xi^k) \to (y,h,\xi),\quad \text{in the } L^2\times \cR \times (H^1)^n\text{-topology}.
\ee
By Proposition \ref{QOm}, for all $\lambda\in\Lambda,$
\be
\Om(y^k,y_T^k,\xi^k,\lambda) = Q(v^k,z^k,\lambda),
\ee
where $v^k := \frac{\dd}{\dd t} y^k$ and $z^k := \xi^k + f_1 y^k.$ For each $(v^k,z^k),$ due to Theorem \ref{SONCQ}, there exists $\lambda^k \in \Lambda$ for which
\be
\label{QkNC}
Q(v^k,z^k,\lambda^k)\geq 0.
\ee
Let $\hat\lambda_k$ be the corresponding element of $\hat\Lambda_1$ given by the bijection  \eqref{maplambda}, and consider $\bar\lambda_k \in \Lambda$ such that $\hat\lambda_k = (1,\bar\lambda_k).$
Then
\be
Q(v^k,z^k,\bar\lambda^k)\geq 0,
\ee
in view of \eqref{QkNC} and since $\bar\lambda_k$ is obtained from $\lambda_k$ by multiplying by a positive scalar.
Given that $\hat\Lambda_1$ is weakly* sequentially compact, there exists a subsequence 
$(\hat\lambda^{k_j})_j$ weakly* convergent  to $\hat\lambda=(1,\bar\lambda) \in \hat\Lambda_1,$ where $\bar\lambda\in \Lambda.$
Thus, $\bar\lambda_k \overset{*}{\rightharpoonup} \bar\lambda$ in $\Lambda$ and we get
\be
 \Om(y,h,\xi,\bar\lambda) = \lim_{k \to \infty} \Om(y^k,y_T^k,\xi^k,\bar\lambda^k) = \lim_{k \to \infty}  Q(v^k,z^k,\bar\lambda^k)\geq 0.
\ee
This concludes the proof.

\end{proof}

\section{Second order sufficient conditions}\label{SecSC}

In this section we show a second order sufficient condition in terms of the uniform positivity of $\Omega$ and guaranteeing that the nominal solution $(\uh,\xh)$ is a strict Pontryagin minimum whenever this condition is satisfied.

To state the main result of this section (see Theorem \ref{SC}) we need to introduce the following concepts.

\begin{definition}
\label{defminpont}
We say that $(\uh,\xh)$ is a {\em Pontryagin minimum} of problem \eqref{P} if for any $M>0,$ there exists $\eps_M>0$ such that $(\uh,\xh)$ is a minimum in the set of feasible trajectories $(u,x)$ satisfying
\be
\label{defminpont1}
\|x-\xh\|_\infty +\|u-\uh \|_1 < \eps_M,\quad \|u-\uh\|_\infty < M.
\ee
A sequence $(v_k) \subset L^\infty$ is said to {\em converge to 0 in the Pontryagin sense} if $\|v_k\|_1 \to 0$ and there exists $M>0$ such that $\|v_k\|_\infty \leq M,$ for all $k\in \cN.$
\end{definition}

\begin{definition}
Let us define the function $\gamma:L^2 \times \cR \times \cR^n \to \cR,$ given by
\be
\gamma(y,h,x_0) := \int_0^T y_t^2 \dd t + h^2 + |x_0|^2.
\ee
We say that $(\uh,\xh)$ satisfies the {\em $\gamma-$growth condition in the Pontryagin sense} if there exists $\rho>0$ such that, for every sequence of feasible variations $(v_k,\delta x_k) $ having $(v_k)$ convergent to 0 in the Pontryagin sense and $\delta x_{k,0} \to 0,$ one has
\be
\label{gg} %gamma growth
\phi(\xh_0+\delta x_{k,T}\,,\xh_T+\delta x_{k,T}) - \phi(\xh_0,\xh_T) \geq \rho \gamma(y_k,y_{k,T},\delta x_{k,0}),
\ee
for $k$ large enough, where $y_{k,t}:=\int_0^tv_{k,s} {\rm d}s,$ for $t\in [0,T].$
 \end{definition}

\begin{definition}\label{sc}
We say that $(\uh,\xh)$ satisfies {\em strict complementarity condition for the control constraints} if the following conditions hold:
\begin{itemize}
\item[(i)]
$\ds\max_{\lambda \in \Lambda} H_u(\uh_t,\xh_t,p_t) > 0,$ for all $t$ in the interior of $B_-,$ 
\item[(ii)]
$\ds\min_{\lambda \in \Lambda} H_u(\uh_t,\xh_t,p_t) < 0,$ for all $t$ in the interior of $B_+,$
\item[(iii)]
$\ds\max_{\lambda \in \Lambda} H_u (\uh_0,\xh_0,p_0) > 0,$  if  $0\in B_- ,$ and
$\ds\max_{\lambda \in \Lambda} H_u (\uh_T,\xh_T,p_T) > 0,$ if $T\in B_-,$ \\
$\ds\min_{\lambda \in \Lambda} H_u (\uh_0,\xh_0,p_0) < 0,$  if  $0\in B_+ ,$ and
$\ds\min_{\lambda \in \Lambda} H_u (\uh_T,\xh_T,p_T) < 0,$ if $T\in B_+.$
\end{itemize}
\end{definition}

Consider the following {\em extended cones of critical directions}
(compare to $\P_S^2$ defined in \eqref{PS2}):

\be
\ba{lll}
\label{hatP2}
\hat{\P}^2&:= 
\{(y,h,\xi)\in L^2 \times \cR \times (H^1)^n: \text{\eqref{eqxitab}-\eqref{transend},\ \eqref{limitcond}(ii)-(iii) hold}\},\\
\hat{\hat{\P}}^2&:= 
\{(y,h,\xi)\in \hat\P^2 : \text{\eqref{limitcond}(iv) holds}\},
\ea\ee
and 
\be
\label{tildeP2}
\P^2_*:=
\left\{
\ba{ll}
\hat{\hat{\P}}^2, & \text{if } T\in C \text{ and } [\mu(T)]>0, \text{ for some } (\beta,\Psi,p,{\rm d}\mu)\in \Lambda,\\
\hat{\P}^2, & \text{otherwise.}
\ea
\right.
\ee

Let us recall the definition of Legendre form (see e.g. \cite{Hes51}):
\begin{definition}
 Let $W$ be a Hilbert space. We say that a quadratic mapping $\mathcal{Q}:W\to \cR$ is a {\em Legendre form} if it is sequentially weakly lower semi continuous such that, if $w_k \to w$ weakly in $W$ and $\mathcal{Q}(w_k) \to \mathcal{Q}(w),$ then $w_k \to w$ strongly.
\end{definition}

\begin{theorem}
\label{SC} 
Suppose that the following conditions hold true:
\begin{itemize}
\item[(i)] $(\uh,\xh)$ satisfies strict complementarity for the control constraint and the complementarity for the state constraint \eqref{weakcomp}(ii);
%{\item[(ii)] $\beta>0,$ for each $(\beta,\Psi,p,{\rm d}\mu)\in \Lambda,$}
\item[(ii)] for each $\lambda \in \Lambda,$ $\Omega(\cdot,\lambda)$ is a Legendre form in the linear space $\{(y,h,\xi)\in L^2 \times \cR \times (H^1)^n:\text{\eqref{eqxitab} holds}\};$ and 
\item[(iii)] there exists $\rho > 0$ such that
\be
\label{unifpos}
\max_{\lambda \in \Lambda} \Omega(y,h,\xi,\lambda) \geq \rho \gamma(y,h,\xi_0),\quad \text{for all } (y,h,\xi) \in \P^2_*.
\ee
\end{itemize}
Then $(\uh,\xh)$ is a Pontryagin minimum satisfying $\gamma-$growth.
\end{theorem}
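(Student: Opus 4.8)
The plan is to argue by contradiction, following the classical ``normalize and pass to the limit'' scheme for second order sufficient conditions, but carried out entirely in the Goh variables so that the degeneracy of the Legendre--Clebsch condition is absorbed. Since $\gamma\ge 0$, the $\gamma$-growth inequality \eqref{gg} already forces $\phi_k\ge\phi_0$, so it suffices to establish $\gamma$-growth; the Pontryagin minimum property follows. Suppose $(\uh,\xh)$ does not satisfy $\gamma$-growth. Then, by a diagonal argument over $\rho=1/m$, I would produce a single sequence of feasible trajectories $(u_k,x_k)$ with $v_k:=u_k-\uh\to 0$ in the Pontryagin sense, $\delta x_{k,0}\to 0$, and
\[
\phi(x_{k,0},x_{k,T})-\phi(\xh_0,\xh_T) < \rho_k\,\gamma_k,\qquad \rho_k\downarrow 0,
\]
where $\gamma_k:=\gamma(y_k,y_{k,T},\delta x_{k,0})$ and $(y_k,\xi_k)$ are the Goh variables \eqref{Goh} of $(v_k,\delta x_k)$. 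Dividing the Goh variables by $\sqrt{\gamma_k}$ produces normalized directions $(\bar y_k,\bar h_k,\bar\xi_k)$ with $\gamma(\bar y_k,\bar h_k,\bar\xi_{k,0})=1$; since the $\bar\xi_k$ solve the linear equation \eqref{eqxitab} driven by the $L^2$-bounded $\bar y_k$, the sequence is bounded in $L^2\times\cR\times(H^1)^n$ and, after extraction, converges weakly to some $(\bar y,\bar h,\bar\xi)$.

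Next I would expand the Lagrangian. Fix a multiplier $\lambda=(1,\Psi,p,\dd\mu)\in\Lambda$ (normalizing $\beta=1$). For a feasible trajectory, feasibility together with the complementarity relations \eqref{mucomsc} and $\Psi\in N_\Phi\big(\Phi(\xh_0,\xh_T)\big)$ give $\LL(u_k,x_k,\lambda)-\LL(\uh,\xh,\lambda)\le \phi_k-\phi_0$, while a second order Taylor expansion about $(\uh,\xh)$, combined with the stationarity of $\LL$ in $x$ encoded by the costate equation \eqref{costateeq}, yields
\[
\LL(u_k,x_k,\lambda)-\LL(\uh,\xh,\lambda)=\int_0^T H_u(\uh_t,\xh_t,p_t)\,v_{k,t}\,\dd t+\tfrac12\,Q(v_k,\delta x_k,\lambda)+o(\gamma_k).
\]
On $S$ and $C$ arcs one has $H_u=0$, while on $B$ arcs the sign of $H_u$ prescribed by \eqref{lmscac4a} matches the feasible sign of $v_k$; the strict complementarity hypothesis (i) then bounds the first order term below by $c\,\|v_k\|_{L^1(B)}$ with $c>0$. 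Replacing $Q(v_k,\delta x_k,\lambda)$ by $\Omega(y_k,y_{k,T},\xi_k,\lambda)$ via Proposition \ref{QOm} and dividing by $\gamma_k$, I obtain
\[
\tfrac12\,\Omega(\bar y_k,\bar h_k,\bar\xi_k,\lambda)+\frac{c\,\|v_k\|_{L^1(B)}}{\gamma_k}\le \rho_k+o(1).
\]
In particular $\|v_k\|_{L^1(B)}=O(\gamma_k)$, hence $\|v_k\|_{L^1(B)}/\sqrt{\gamma_k}\to 0$, which forces $\bar y_k$ to become asymptotically constant on each $B$ arc.

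I would then identify the weak limit and conclude. Passing to the limit in \eqref{eqxitab}, in the linearized endpoint relations \eqref{transend} (valid in the limit because the cost increment is $o(\gamma_k)$ and $\Phi(x_k)\in K_\Phi$), and in the active state-constraint condition \eqref{intcond}, and using the $B$-arc estimate together with the Ascoli--Arzel\`a reasoning of Proposition \ref{LimCond} to obtain the boundary conditions \eqref{limitcond}(ii)--(iii), shows $(\bar y,\bar h,\bar\xi)\in\hat\P^2$; when $T\in C$ and $[\mu(T)]>0$ for some multiplier, the active state constraint at $T$ additionally forces \eqref{limitcond}(iv), so in both cases of \eqref{tildeP2} one has $(\bar y,\bar h,\bar\xi)\in\P^2_*$. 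Using the weakly* compactness of $\Lambda_1$, choose $\lambda^\ast$ attaining $\max_{\lambda}\Omega(\bar y,\bar h,\bar\xi,\lambda)$ (near-maximizers suffice otherwise) and run the expansion with $\lambda=\lambda^\ast$. By the Legendre form hypothesis (ii), $\Omega(\cdot,\lambda^\ast)$ is weakly lower semicontinuous, so the last displayed inequality gives $\Omega(\bar y,\bar h,\bar\xi,\lambda^\ast)\le 0$. If $\gamma(\bar y,\bar h,\bar\xi_0)>0$, this contradicts the uniform positivity (iii), since $\Omega(\bar y,\bar h,\bar\xi,\lambda^\ast)=\max_{\lambda}\Omega\ge\rho\,\gamma(\bar y,\bar h,\bar\xi_0)>0$. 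If $\gamma(\bar y,\bar h,\bar\xi_0)=0$, then $(\bar y,\bar h,\bar\xi)=0$ and $\Omega(\bar y,\bar h,\bar\xi,\lambda^\ast)=0$, so the inequality sandwiches $\Omega(\bar y_k,\bar h_k,\bar\xi_k,\lambda^\ast)\to 0$; the strong convergence property of the Legendre form then yields $(\bar y_k,\bar h_k,\bar\xi_k)\to 0$ strongly, contradicting $\gamma(\bar y_k,\bar h_k,\bar\xi_{k,0})=1$. Either way a contradiction is reached, proving $\gamma$-growth.

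The main obstacle I expect is the remainder estimate $o(\gamma_k)$ in the Taylor expansion: because $v_k$ tends to zero only in $L^1$ (Pontryagin sense) and not in $L^2$ or $L^\infty$, the naive bound $o(\|(v_k,\delta x_k)\|^2)$ is useless, and one must exploit the Goh transform to re-express the second variation through $(y_k,\xi_k)$, which are the quantities genuinely controlled by $\gamma_k$, while carefully tracking the contributions of the $B$ arcs via strict complementarity. The second delicate point is verifying that the weak limit lands in $\P^2_*$, in particular the correct treatment of the final-time jump condition \eqref{limitcond}(iv) selected by \eqref{tildeP2}.
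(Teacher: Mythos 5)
Your overall scheme is the same as the paper's (contradiction, normalization of the Goh variables by $\sqrt{\gamma_k}$, weak limit, Lagrangian expansion via Proposition \ref{PropLexpansion} and Proposition \ref{QOm}, then Legendre form plus \eqref{unifpos}), and your ``Fact 2'' part is sound. The first genuine gap is your claim that strict complementarity bounds the first order term below by $c\,\|v_k\|_{L^1(B)}$ with $c>0$ for a \emph{fixed} multiplier. Definition \ref{sc} is a pointwise condition on $\max_{\lambda\in\Lambda}H_u$, and the maximizing multiplier may depend on $t$; worse, for \emph{every} fixed $\lambda\in\Lambda$ one has $H_u=0$ a.e.\ on $S$ and $C$ arcs (by \eqref{lmscac4a} and \eqref{chypradn}(ii)), and $H_u$ is continuous across $BS$, $SB$, $BC$, $CB$ junctions (the measure $\dd\mu$ has no atom there, cf.\ Corollary \ref{CorJump}), so $H_u^\lambda(t)\to 0$ as $t$ approaches any junction of a $B$ arc with an $S$ or $C$ arc. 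Hence no uniform positive lower bound on $B$ exists, the estimate $\|v_k\|_{L^1(B)}=O(\gamma_k)$ is unjustified, and with it your route to \eqref{intcond}(ii) and \eqref{limitcond}(ii)--(iii) collapses (note also that the Ascoli--Arzel\`a reasoning of Proposition \ref{LimCond} does not apply to the $\bar y_k$, which are not Lipschitz critical directions). The paper instead localizes: it tests $\int_0^T H_u\varphi\, v_k/\sqrt{\gamma_k}\,\dd t\to 0$ against nonnegative smooth $\varphi$ supported near an arbitrary interior point $t_0$ of a $B$ arc, integrates by parts to get $\int H_u\varphi\,\dd\bar y=0$ for every $\lambda\in\Lambda$, and then invokes, for each $t_0$ separately, a multiplier $\hat\lambda$ with $H_u^{\hat\lambda}(t_0)>0$; this yields $\dd\bar y=0$ on $B$ arcs, and a similar argument near $t=0$ and $t=T$ gives \eqref{limitcond}(ii)--(iii).

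The second gap is the phrase ``passing to the limit \dots in the active state-constraint condition \eqref{intcond}''. The approximating $(y_k,\xi_k)$ come from \emph{feasible trajectories}, not from critical directions, so they satisfy only $g'(\xh_t)z_{k,t}\le o(\sqrt{\gamma_k})+O(|\delta x_{k,t}|^2)$ on $C$; in the limit this gives the inequality $g'(\xh_t)(\bar\xi_t+f_1(\xh_t)\bar y_t)\le 0$ on $C$, \emph{not} the equality \eqref{intcond}(i) required for membership in $\hat\P^2$, nor \eqref{limitcond}(iv) in the case $T\in C$, $[\mu(T)]>0$. The paper closes this by a dual argument you never carry out: Proposition \ref{Propintg'} together with the cost bound \eqref{chap1qgrowth} yields $\int_{[0,T)}g'(\xh_t)(\bar\xi_t+f_1(\xh_t)\bar y_t)\,\dd\mu_t+g'(\xh_T)(\bar\xi_T+f_1(\xh_T)\bar h)[\mu(T)]\ge 0$, and combining this with the pointwise inequality forces equality $\dd\mu$-a.e.; one then concludes equality on all of $C$ precisely because the support of $\dd\mu$ is $C$, i.e.\ hypothesis \eqref{weakcomp}(ii), which your sketch lists but never uses. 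Without this step your contradiction is obtained only against a cone strictly larger than $\P^2_*$ (directions satisfying an inequality on $C$), so what you prove needs a strictly stronger positivity hypothesis than (iii); flagging the point as ``delicate'' at the end does not substitute for the argument.
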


\begin{remark}
We have that $\Om(\cdot,\lambda)$ 
is a Legendre form iff
 there exists $\alpha>0,$ such that,  for every $\lambda \in \Lambda,$
\be
R(\xh_t,\lambda_t)+ f_1(\xh_t)^\top g''(\xh_t) f_1(\xh_t) \nu_t > \alpha,\quad \text{on } [0,T],
\ee
where $\nu$ is the density of ${\rm d}\mu,$ which vanishes on $[0,T] \backslash C,$ and is given by the expression \eqref{nu} on the set $C.$ See \cite{Hes51} for more details.
\end{remark}

The remainder of this section is devoted to the proof of Theorem
\ref{SC}. We first show the technical results in Lemma
\ref{expansionL}, Propositions \ref{PropLexpansion} and
\ref{Propintg'}, and then give the proof of the Theorem \ref{SC}. 
\if{An example verifying the conditions of this theorem is provided
  later on in Section \ref{SecExamples}.
} \fi

For the lemma below recall the definition of the Lagrangian function $\LL$ which was given in \eqref{lmscac5}.
\begin{lemma}
\label{expansionL}
Let $(u,x) \in L^2 \times (H^1)^n$ be a solution of \eqref{bsbstateeq}, and set $(\delta x,\delta u):=(x,u)-(\xh,\uh).$ 
Then, the following expression for the Lagrangian function holds for every multiplier $\lambda \in \Lambda,$
\be
\LL(u,x,\lambda) = \LL(\uh,\xh,\lambda) + \tilde{Q}(\delta u,\delta x,\lambda) + \mathpzc{r}(\delta u,\delta x,\lambda),
\ee
where
\benl
\tilde{Q}(\delta u,\delta x,\lambda) := \int_0^T  H_u \delta u \dd t + Q(\delta u,\delta x,\lambda)
\eenl
and
\benl
\begin{split}
&\mathpzc{r}(\delta u,\delta x) := \mathcal{O} \big( |(\delta x_0,\delta x_T)|^3 \big)\\
&+ \int_0^T \big\{
\half H_{uxx}(\delta u,\delta x,\delta x) + p(\uh+\delta u)  \mathcal{O} \big( |\delta x|^3 \big) \big\} {\rm d} t+ \int_0^T  \mathcal{O} \big( |\delta x|^3 \big) {\rm d}\mu .
\end{split}
\eenl
%Here $L_\ell$ is the Lipschitz constant of $D^2\ell,$ $L_g$ for $g''$ and $L_F$ is a Lipschitz constant for $D^2 f_i$ for all $i.$
\end{lemma}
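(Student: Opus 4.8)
The plan is to expand each of the three constituents of the Lagrangian \eqref{lmscac5} about the nominal pair $(\uh,\xh)$ and then reassemble the outcome. Using $p\,f(u,x)=H(u,x,p)$ from \eqref{defpreham}, write $\LL(u,x,\lambda)=\ell^{\beta,\Psi}(x_0,x_T)+\int_0^T (H(u_t,x_t,p_t)-p_t\dot x_t)\,\dd t+\int_0^T g(x_t)\,\dd\mu_t$. I would Taylor-expand $H$ in $(\delta u,\delta x)$ about $(\uh,\xh)$; since $f$ is affine in $u$ we have $H_{uu}\equiv 0$, so the expansion consists only of the first-order part $H_u\,\delta u+H_x\,\delta x$, the second-order part $H_{ux}(\delta u,\delta x)+\tfrac12 H_{xx}(\delta x,\delta x)$, the single surviving cubic term $\tfrac12 H_{uxx}(\delta u,\delta x,\delta x)$, and the super-quadratic state remainders of $f_0$ and $u f_1$, which are $\mathcal{O}(|\delta x|^3)$ and carry the factor $p(\uh+\delta u)$. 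Likewise I would expand $\ell^{\beta,\Psi}$ and $g$ by the second-order Taylor formula, obtaining their first- and second-order parts together with the remainders $\mathcal{O}(|(\delta x_0,\delta x_T)|^3)$ and $\int_0^T \mathcal{O}(|\delta x|^3)\,\dd\mu$.

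The core of the argument is the vanishing of the first-order contribution in $\delta x$. Collecting these terms, I would show that $D\ell^{\beta,\Psi}(\delta x_0,\delta x_T)+\int_0^T H_x\,\delta x\,\dd t-\int_0^T p\,\delta\dot x\,\dd t+\int_0^T g'(\xh)\,\delta x\,\dd\mu=0$. To this end, use $H_x=p\,f_x(\uh,\xh)$ and the costate equation \eqref{costateeq}, namely $-\dd p_t=p_t f_x(\uh_t,\xh_t)\,\dd t+g'(\xh_t)\,\dd\mu_t$, to combine the two integrals against $\dd t$ and $\dd\mu$ into $\int_0^T (p f_x\,\dd t+g'\dd\mu)\,\delta x=-\int_0^T \delta x\,\dd p$. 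Integrating $-\int_0^T p\,\delta\dot x\,\dd t$ by parts — legitimate since $p\in BV$ and $\delta x\in(H^1)^n$ is continuous — and adding, the two bulk integrals telescope to the boundary term $-(p_T\delta x_T-p_0\delta x_0)$, which is cancelled by $D\ell^{\beta,\Psi}(\delta x_0,\delta x_T)$ by virtue of the transversality condition \eqref{lmscac7}, $(-p_0,p_T)=D\ell^{\beta,\Psi}(\xh_0,\xh_T)$. Hence only the control term $\int_0^T H_u\,\delta u\,\dd t$ survives at first order, which is exactly the linear part of $\tilde Q$.

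It then remains to assemble. The second-order contributions — the $H_{ux}$- and $H_{xx}$-terms from the pre-Hamiltonian, the second differential of $\ell^{\beta,\Psi}$, and the $g''$-form integrated against $\dd\mu$ — reconstitute $Q^0$, $Q^E$ and $Q^g$, i.e.\ the form $Q$ of \eqref{Q}--\eqref{quad}. Everything else is collected into $\mathpzc{r}$: the two cubic endpoint and state-constraint remainders, the super-quadratic remainders $p(\uh+\delta u)\mathcal{O}(|\delta x|^3)$ coming from $f_0$ and $uf_1$, and the explicit term $\int_0^T \tfrac12 H_{uxx}(\delta u,\delta x,\delta x)\,\dd t$. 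This last term is deliberately kept explicit rather than absorbed into a generic higher-order remainder: in the Pontryagin regime $\delta u$ is small only in $L^1$ while merely bounded in $L^\infty$, so a quantity linear in $\delta u$ and quadratic in $\delta x$ is not negligible of higher order in the relevant topology and must be displayed.

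The step demanding genuine care is the Stieltjes integration by parts and the cancellation of the second paragraph: one must treat the $BV$ costate $p$ and the measure $\dd\mu\in\M$ correctly at the endpoints $t=0$ and $t=T$ — recalling the convention $\mu\in BV_0$ and the endpoint jump relations \eqref{stateconstraint4} — so that the boundary term generated matches \eqref{lmscac7} exactly, with no residual jump contributions. The secondary technical points are to justify that the Taylor remainders are truly of third order in $\delta x$, which rests on local Lipschitz control of the second derivatives of $f_0,f_1,g,\phi,\Phi$ along the nominal trajectory, and that all integrals against $\dd\mu$ are finite, using $\delta x\in(H^1)^n\subset C([0,T])^n$ and the finiteness of the Radon measure.
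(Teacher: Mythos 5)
Your proposal is correct and follows essentially the same route as the paper: second-order Taylor expansions of the constituents of $\LL$ (your expansion of $H=pf$ in $(\delta u,\delta x)$ is equivalent to the paper's expansions of $f_0,f_1$ multiplied by $p(\uh+\delta u)$), combined with the key cancellation of the first-order terms in $\delta x$ via the costate equation \eqref{costateeq}, the transversality condition \eqref{lmscac7}, and Stieltjes integration by parts — which is exactly the paper's identity $D\ell^{\beta,\Psi}(\delta x_0,\delta x_T)=[p\,\delta x]_0^T=\int_0^T p(-f_x\delta x+\dot{\delta x})\,\dd t-\int_0^T g'(\xh)\delta x\,\dd\mu$. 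Your additional remarks on keeping $\tfrac12 H_{uxx}$ explicit and on endpoint conventions are sound elaborations of what the paper leaves implicit.
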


\begin{proof}
Let us consider the following second order Taylor expansions, written in compact form,
\be
\label{ellfg}
\begin{split}
\ell^{\beta,\Psi} (x_0,x_T) &= \ell^{\beta,\Psi} + D\ell^{\beta,\Psi} (\delta x_0,\delta x_T) + \half D^2 \ell^{\beta,\Psi} (\delta x_0,\delta x_T)^2+\mathcal{O} \big( |(\delta x_0,\delta x_T)|^3,\\
f_i(x_t) &= f_{i,t} + Df_i \delta x_T +\half D^2 f_i \delta x_t ^2 + \mathcal{O} \big( |\delta x|^3 \big),\quad i=0,1, \\
g(x_t) &= \half g + Dg \delta x_t + \half g'' \delta x_t^2 + \mathcal{O} \big( |\delta x|^3 \big).
\end{split}
\ee
Observe that
\be
\label{Dell}
D\ell^{\beta,\Psi} (\delta x_0,\delta x_T) = [ p \, \delta x]^T_0 =
\int _0^T p(-f_x\delta x + \dot{\delta x}) {\rm d} t - \int_0^T g'(\xh) \delta x {\rm d} \mu.
\ee
Using the identities  \eqref{ellfg} and \eqref{Dell} in the following equation
\be
\LL(u,x,\lambda) = \ell^{\beta,\Psi}(x_0,x_T) +\int_0^T p \big( f_0+uf_1- \dot{x} \big) {\rm d} t + \int_0^T g(x) {\rm d} \mu,
\ee
we obtain the desired expression for $\LL(u,x,\lambda).$ The result follows.

\end{proof}

In view of the previous Lemma \ref{expansionL} and \cite[Lemma 8.4]{ABDL11} we can prove the following:

\begin{proposition}
\label{PropLexpansion}
Let $(v_k)\subset L^\infty$ be a sequence converging to 0 in the Pontryagin sense and $(x_{k,0})$ a sequence in $\cR^n$ converging to $\xh_0.$ Set $u_k := \uh+v_k,$ and let $x_k$ be the corresponding solution of equation \eqref{bsbstateeq} with initial value equal to $x_{k,0}.$ Then, for every $\lambda \in \Lambda,$ one has
\be
\label{Lexpansion}
\LL(u_k,x_k,\lambda) = \LL(\uh,\xh,\lambda) + \int_0^T H_u(t)v_{k,t} dt + Q(v_k,z_k,\lambda) + o(\gamma_k),
\ee
where $z_k:=z[v_k,x_{k,0}-\xh_0],$ $y_{k,t}:=\int_0^tv_{k,s} {\rm d}s$ and
$\gamma_k:=\gamma(y_k,y_{k,T},x_{k,0}-\xh_0).$
\end{proposition}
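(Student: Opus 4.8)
The plan is to derive the expansion \eqref{Lexpansion} from the exact second-order expansion of the Lagrangian furnished by Lemma \ref{expansionL}, and then to control the remainder term $\mathpzc{r}(\delta u,\delta x,\lambda)$ by the quantity $\gamma_k$. Setting $\delta u:=v_k$ and $\delta x:=x_k-\xh$, Lemma \ref{expansionL} gives immediately
\[
\LL(u_k,x_k,\lambda)=\LL(\uh,\xh,\lambda)+\int_0^T H_u\,v_{k}\,\dd t+Q(v_k,\delta x,\lambda)+\mathpzc{r}(v_k,\delta x,\lambda),
\]
and since $z_k=z[v_k,x_{k,0}-\xh_0]$ is the linearization of the state equation, the quadratic term $Q(v_k,\delta x,\lambda)$ agrees with $Q(v_k,z_k,\lambda)$ up to a term that is $o(\gamma_k)$ (the difference between $\delta x$ and its linearization $z_k$ is of higher order in the variation). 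So the whole burden of the proof is the estimate $\mathpzc{r}(v_k,\delta x,\lambda)=o(\gamma_k)$, together with the replacement of $\delta x$ by $z_k$ inside $Q$.

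The key preliminary step is a size estimate on the state variation, which is exactly what \cite[Lemma 8.4]{ABDL11} supplies: since $(v_k)$ converges to $0$ in the Pontryagin sense (so $\|v_k\|_1\to0$ with $\|v_k\|_\infty$ uniformly bounded) and $x_{k,0}\to\xh_0$, one controls $\|\delta x\|_\infty$ and the relevant norms of $\delta x$ in terms of $\gamma_k=\gamma(y_k,y_{k,T},x_{k,0}-\xh_0)$. First I would record from that lemma the bounds $\|\delta x\|_\infty=\mathcal{O}(\gamma_k^{1/2})$ and, more crucially, $\|\delta x-z_k\|_\infty=o(\gamma_k^{1/2})$, which quantifies how well the linearized state $z_k$ approximates the true increment $\delta x$. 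These two facts are what convert the abstract cubic remainder of Lemma \ref{expansionL} into the clean $o(\gamma_k)$ error.

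Next I would treat the three pieces of $\mathpzc{r}$ in turn. The endpoint contribution $\mathcal{O}(|(\delta x_0,\delta x_T)|^3)$ is bounded by $\|\delta x\|_\infty\cdot|(\delta x_0,\delta x_T)|^2=\mathcal{O}(\gamma_k^{1/2})\cdot\mathcal{O}(\gamma_k)=o(\gamma_k)$. For the running integral, the term $\int_0^T\tfrac12 H_{uxx}(\delta u,\delta x,\delta x)\,\dd t$ is linear in $\delta u=v_k$ and quadratic in $\delta x$; bounding $|\delta x|^2$ pointwise by $\mathcal{O}(\gamma_k^{1/2})|\delta x|$ and using $\|v_k\|_1\to0$ with $\|v_k\|_\infty$ bounded lets one absorb it into $o(\gamma_k)$, and the cubic terms $p(\uh+\delta u)\mathcal{O}(|\delta x|^3)$ are handled the same way using boundedness of $p$ and $\uh+\delta u$. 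Finally the measure term $\int_0^T\mathcal{O}(|\delta x|^3)\,\dd\mu$ is $o(\gamma_k)$ because $\dd\mu$ is a fixed finite measure and $\|\delta x\|_\infty^3=\mathcal{O}(\gamma_k^{3/2})=o(\gamma_k)$.

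I expect the main obstacle to be the passage from $Q(v_k,\delta x,\lambda)$ to $Q(v_k,z_k,\lambda)$, since $Q$ involves the product $v_k H_{ux}\delta x$ which is only linear in $\delta x$; here the naive bound $\|\delta x-z_k\|_\infty\cdot\|v_k\|_1$ must be shown to be $o(\gamma_k)$ rather than merely $o(\gamma_k^{1/2})$, and this is precisely where the sharp approximation estimate from \cite[Lemma 8.4]{ABDL11} is essential rather than a crude Gronwall bound. Once that estimate is in hand, collecting the four remainder bounds gives the stated $o(\gamma_k)$ and completes the proof.
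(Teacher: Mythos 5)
Your high-level structure (apply Lemma \ref{expansionL}, then control the remainder via the estimates of \cite{ABDL11}) is the same as the paper's, but the way you execute the remainder analysis contains a genuine gap: the estimate $\|\delta x_k\|_\infty=\mathcal{O}(\gamma_k^{1/2})$, on which almost all of your bounds rest, is false under Pontryagin convergence, and \cite[Lemma 8.4]{ABDL11} cannot supply it. Take $v_k$ equal to $M$ on $[0,\eps_k]$, $-M$ on $[\eps_k,2\eps_k]$ and $0$ elsewhere, with $x_{k,0}=\xh_0$: then $\|y_k\|_\infty\sim M\eps_k$ so $\|\delta x_k\|_\infty\sim |y_{k,t}f_1(\xh_t)|\sim\eps_k$, while $\gamma_k=\|y_k\|_2^2+y_{k,T}^2\sim\eps_k^3$, so $\|\delta x_k\|_\infty/\gamma_k^{1/2}\to\infty$. (Only $\|\delta x_k\|_\infty=o(1)$ and $\|\delta x_k\|_2^2=\mathcal{O}(\gamma_k)$ hold, together with $|\delta x_{k,0}|+|\delta x_{k,T}|=\mathcal{O}(\sqrt{\gamma_k})$ at the two endpoints, the latter precisely because $y_{k,T}^2$ is a separate term of $\gamma$.) With the correct worst-case bounds, your treatment of $\int_0^T v_k H_{uxx}(\delta x_k,\delta x_k)\,\dd t$ gives only $\mathcal{O}(\gamma_k)$, not $o(\gamma_k)$, and the same happens in the spike example above.

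The deeper problem is that the step you yourself flag as the main obstacle, namely showing $\int_0^T v_k H_{ux}(\delta x_k-z_k)\,\dd t=o(\gamma_k)$, cannot be closed by \emph{any} product bound of the form $\|v_k\|_1\cdot\|\delta x_k-z_k\|_\infty$: under Pontryagin convergence $\|v_k\|_1$ is not controlled by any positive power of $\gamma_k$ (an oscillating bang-bang $v_k$ has $\|v_k\|_1$ bounded away from $0$ while $\gamma_k\to0$), so even the sharp estimate $\|\delta x_k-z_k\|_\infty=o(\sqrt{\gamma_k})$ yields only $o(\sqrt{\gamma_k})$. The terms that are linear in $v_k$ must instead be integrated by parts, writing $v_k=\dot y_k$, $v_ky_k=\tfrac12\ddt y_k^2$, $v_ky_k^2=\tfrac13\ddt y_k^3$, so that everything is expressed through $\|y_k\|_2^2\le\gamma_k$, $y_{k,T}^2\le\gamma_k$ and $\|y_k\|_\infty=o(1)$; this Goh-transform bookkeeping is exactly the content of the \emph{proof} of \cite[Lemma 8.4]{ABDL11}. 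The paper sidesteps all of this by citing that lemma as a black box whose conclusion \emph{is} the $o(\gamma_k)$ expansion (its extension to a variable initial state being the point of the remark following the proposition), whereas you cite it only for sup-norm estimates and then attempt the expansion by hand; as written, that attempt does not go through.
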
 

\begin{remark}
Notice that Lemma 8.4 in \cite{ABDL11} was proved for $\gamma$ depending only on $(y,h)$ since the initial value $x_0$ was fixed throughout the article, while here $\gamma$ depends also on the initial variation of the state. The extension of Lemmas 8.3 and 8.4 in \cite{ABDL11} for the case with variable initial state are immediate, and the proofs are given in detail in \cite{Aro12ArXiv}.
\end{remark}

\begin{proposition}
\label{Propintg'}
Let $(p,\dd\mu) \in (BV)^{n*} \times \M$ verify the costate equation \eqref{costateeq}-\eqref{lmscac7}, and let $(v,z)\in L^2 \times (H^1)^n$ satisfy the linearized state equation \eqref{lineq}. Then,
\be
\label{eqintg'}
\int_0^T g'(\xh_t) z_t \dd \mu_{t}
+D\ell^{\beta,\Psi}(z_0,z_T) = \int_0^T H_u(t) v_t\dd t.
\ee
\end{proposition}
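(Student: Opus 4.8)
The plan is to derive \eqref{eqintg'} by integrating by parts the scalar function $t\mapsto p_tz_t$ over $[0,T]$, using the costate equation \eqref{costateeq} to cancel the drift terms and the transversality condition \eqref{lmscac7} to identify the resulting boundary term with $D\ell^{\beta,\Psi}(z_0,z_T)$. First I would record the two ingredients that make the cancellation work: since $f(u,x)=f_0(x)+uf_1(x)$ one has $H_u=pf_1$, so that $H_u(t)=p_tf_{1,t}$ along the nominal trajectory, and $A_t=f_x(\uh_t,\xh_t)$ by \eqref{AB}. The relevant differential relations are then the linearized state equation \eqref{lineq}, namely $\dot z_t=A_tz_t+v_tf_{1,t}$, which is absolutely continuous, and the costate equation rewritten as $\dd p_t=-p_tA_t\,\dd t-g'(\xh_t)\,\dd\mu_t$, whose solution $p$ is only of bounded variation.

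The central step is the integration-by-parts formula for $t\mapsto p_tz_t$. Because $z$ is continuous (being $H^1$) while $p$ is merely $BV$, the jumps of $p$ cause no ambiguity, and one has
\[
p_Tz_T-p_0z_0=\int_0^T (\dd p_t)\,z_t+\int_0^T p_t\,\dot z_t\,\dd t,
\]
the first integral being taken against the measure $\dd p$ (including any atoms) and the second against Lebesgue measure, so that the value of $p$ on its null jump set is irrelevant. Substituting the two relations above, the terms $\mp\int_0^T p_tA_tz_t\,\dd t$ cancel, leaving
\[
p_Tz_T-p_0z_0=-\int_0^T g'(\xh_t)\,z_t\,\dd\mu_t+\int_0^T p_t\,f_{1,t}\,v_t\,\dd t.
\]

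Finally I would invoke the transversality condition \eqref{lmscac7}, $(-p_0,p_T)=D\ell^{\beta,\Psi}(\xh_0,\xh_T)$, to recognize the boundary term as $p_Tz_T-p_0z_0=D\ell^{\beta,\Psi}(z_0,z_T)$, and the identity $p_tf_{1,t}=H_u(t)$ to rewrite the last integral as $\int_0^T H_u(t)v_t\,\dd t$; rearranging the displayed equality then yields \eqref{eqintg'} exactly. The only delicate point is the justification of the $BV\times H^1$ integration-by-parts formula and, in particular, the correct treatment of possible atoms of $\dd\mu$ (hence of $\dd p$) at $t=0$ and $t=T$, as illustrated in Remark \ref{RemJump}; the continuity of $z$ is precisely what makes this bookkeeping unambiguous, so I expect no genuine difficulty beyond it.
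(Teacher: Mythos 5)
Your proof is correct and is essentially the paper's own argument: the paper carries out the same computation, using the costate equation to replace $g'(\xh_t)\,\dd\mu_t$, the linearized state equation to replace $A_tz_t$, and the $BV\times H^1$ integration by parts to produce the boundary term $[p_tz_t]_{0-}^{T+}$, which the transversality condition \eqref{lmscac7} identifies with $D\ell^{\beta,\Psi}(z_0,z_T)$. The only difference is cosmetic ordering (the paper eliminates $g'\,\dd\mu$ first and integrates by parts last, you integrate by parts first), and both treatments handle possible endpoint atoms of $\dd\mu$ via the conventions $p(0-)=p_0$, $p(T+)=p_T$.
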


\begin{proof}
Note that
\be
\begin{split}
\int_0^T g'(\xh_t) z_t \dd \mu_{t}
&= - \int_0^T \sum_{j=1}^n z_{j,t} \dd p_{j,t}  - \int_0^T p_tf_x(\uh_t,\xh_t)z_t \dd t\\
&= - \int_0^T \sum_{j=1}^n z_{j,t} \dd p_{j,t} - \int_0^T p_t(\dot{z}_t -f_1(\xh_t)v_t)\dd t\\
&= -[pz]_{0-}^{T+} + \int_0^T H_u(t) v_t\dd t.
\end{split}
\ee
The wanted result follows from latter equation and the boundary conditions \eqref{lmscac7}.
\end{proof}

\begin{proof}
{\em (of Theorem \ref{SC}).}
Let us suppose on the contrary that  the conclusion does not hold. Hence, there should exist
a sequence $(v_k,x_{k,0})  \subset L^\infty\times \cR^n$ of non identically zero functions having $(v_k)$ converging to 0 in the Pontryagin sense and $x_{k,0} \to \xh_0,$ and such that, setting $u_k:=\uh+v_k,$ the corresponding solutions $x_k$ of \eqref{bsbstateeq} are feasible and
\begin{equation}
\label{chap1qgrowth}
\phi(x_{k,0},x_{k,T}) \leq \phi(\xh_0,\xh_T)  +o(\gamma_k),
\end{equation}
where $y_{k,t}:=\int_0^tv_{k,s} {\rm d}s$ and
$\gamma_k:=\gamma(y_k,y_{k,T},x_{k,0}-\xh_0).$
 Set $z_k:=z[v_k,z_{k,0}]$ with $z_{k,0}:= x_{k,0}-\xh_0.$  Take any $\lambda = (\beta, \Psi,p,\dd\mu) \in \Lambda,$ and multiply
inequality \eqref{chap1qgrowth} by $\beta$ (which is nonnegative), afterwards add the nonpositive term
$\Psi  \cdotp \Phi(x_{k,0},x_{k,T})+\int_0^Tg(x_k){\rm } \dd\mu$
to the left hand-side of the resulting inequality, and obtain the following:
\begin{equation}
\label{quadlag}
\LL(u_k,x_k,\lambda)\leq\LL(\uh,\xh,\lambda)+o(\gamma_k).
\end{equation}
Set
$(\yb_k,\hb_k,\xib_{k,0}):=(y_k,y_{k,T},z_{k,0})/\sqrt{\gamma_k}.$ Note that the
elements of this sequence have unit norm in $L^2\times
\cR\times \cR^n.$ By the Banach-Alaoglu Theorem, extracting if
necessary a subsequence, we may assume that there exists
$(\yb,\hb,\xib_0)\in L^2\times \cR \times \cR^n$  such that
\begin{equation}
\label{chap1limityk}
\yb_k\rightharpoonup \yb,\ \text{and}\
(\hb_k,\xib_{k,0})\rightarrow (\hb,\xib_0),
\end{equation}
where the first limit is given in the weak topology of $L^2.$ Define $\xib$ as  the solution of \eqref{eqxitab} associated to $\yb$ and the initial condition $\xib_0.$

%%%%%%%%%%%%%%%%%%%%%%%%%%%%%%%%

The remainder of the proof is split in two parts:

{\em Fact 1:} The weak limit $(\yb,\hb,\xib)$ belongs to $\P^2_*.$

{\em Fact 2:} The inequality \eqref{quadlag} contradicts the hypothesis of uniform positivity \eqref{unifpos}.

 {\em Proof of Fact 1.} Recall the definition of the cone $\P^2_*$
 given in equation \eqref{tildeP2}. The proof of Fact 1 into four
 parts, in which we establish: 
{\em A)} condition $\eqref{intcond}(ii),
$ {\em B)}  \eqref{limitcond}(ii)-(iii), 
 {\em C)} \eqref{intcond}(i) and \eqref{limitcond}(iv), 
and {\em D)} that $(\yb,\hb,\xib)$ satisfies \eqref{transend}.

{\em A)} Let us show that $(\yb,\hb,\xib)$ verifies \eqref{intcond}(ii), i.e. $\yb$ is constant on each $B$ arc.
From \eqref{Lexpansion}, \eqref{quadlag} and the equivalence between $Q$ and $\Omega$ stated in Proposition \ref{QOm},
it follows that
\be
\label{boundHu}
-\Omega (\xi_k,y_k,h_k,\lambda)+o(\gamma_k) \geq  \int_0^TH_u(t)v_{k,t} \dtt \geq 0
,
\ee
where $\xi_k$ is solution of \eqref{eqxitab} corresponding to $y_k.$ The last inequality in \eqref{boundHu} holds in view of the minimum condition \eqref{lmscac4a} and since $\uh+v_k$ satisfies the control constraint \eqref{bsbcc}.
By the continuity of the mapping $\Omega$ over the space
$L^2\times \cR \times (H^1)^n $ and from \eqref{boundHu} we deduce that
\benl
0\leq \int_0^TH_u (t)v_{k,t}\dtt\leq O(\gamma_k).
\eenl
Hence, since the integrand in previous inequality is nonnegative for all $k\in \cN,$ we have that  
\be 
\label{cseq1}
\lim_{k\rightarrow \infty} \int_0^T H_u(t)\varphi_t \frac{v_{k,t}}{\sqrt{\gamma_k}}dt=0,
\ee
for any nonnegative $C^1$ function $\varphi:[0,T]\to \cR.$ 
Let us consider, in particular, such a function $\varphi$ having its support  included in a $B$ arc $(c,d).$  Integrating by parts in \eqref{cseq1} and in view of \eqref{chap1limityk}, we
obtain
\benl
\label{chap1cseq2}
0=\lim_{k\rightarrow \infty} \int_c^d
\ddt\left(H_u (t)\varphi_t\right)
\yb_{k,t}\dtt=\int_c^d
\ddt\left(H_u (t)\varphi_t \right)
\yb_t\dtt.
\eenl
Over $(c,d),$ $v_k$ has constant sign and therefore, $\yb$ is either nondecreasing or nonincreasing. Thus,  we can integrate by parts in the previous equation to get
\be
\label{chap1cseq3}
\int_c^dH_u(t)\varphi_t \mathrm{d} \yb_t=0.
\ee
Take now any $t_0\in(c,d).$ By the strict complementary condition for the control constraint assumed here (see Definition \ref{sc}), there exists a multiplier a multiplier $\hat\lambda=(\hat\beta,\hat\Psi,\hat p,\dd\hat{\mu})   \in \Lambda$ such that $H_u(\uh_{t_0},\xh_{t_0},\hat p_{t_0})>0.$ Hence, in view of the continuity of $H_u$ on $B,$\footnote{Actually $H_u$ is continuous on $B$ since $p$ does not jump on $B.$} there exists $\varepsilon>0$ such that $H_u(\uh_{t},\xh_{t},\hat p_{t})>0$  on $(t_0-2\varepsilon,t_0+2\varepsilon)\subset (c,d).$ Choose $\varphi$ such that $\supp\varphi\subset (t_0-2\varepsilon,t_0+2\varepsilon),$ and $H_u(\uh_{t},\xh_{t},\hat p_{t}) \varphi_t=1$ on $(t_0-\varepsilon,t_0+\varepsilon).$
Since $\mathrm{d}\yb\geq0,$ equation \eqref{chap1cseq3} yields
\be
\label{chap1cseq4}
\begin{split}
0
&=\int_c^d H_u(\uh_{t},\xh_{t},\hat p_{t}) \varphi_t \mathrm{d}\yb_t
\geq
\int_{t_0-\varepsilon}^{t_0+\varepsilon}
H_u(\uh_{t},\xh_{t},\hat p_{t}) \varphi_t \mathrm{d}\yb_t \\
&=
\int_{t_0-\varepsilon}^{t_0+\varepsilon} \mathrm{d}\yb_t
=\yb_{t_0+\varepsilon}-\yb_{t_0-\varepsilon}.
\end{split}
\ee
As both $\varepsilon$ and $t_0\in (c,d)$ are arbitrary we deduce that
\be
\label{chap1dybar0}
\mathrm{d}\yb_t=0,\quad \text{on } B.
\ee
Hence, $(\yb,\hb,\xib)$ satisfies \eqref{intcond}(ii).

{\em B)} Assume now that a $B_{0\pm}$ arc exists, and let us prove that $\yb = 0$ on $B_{0\pm}$ (see the definition of $B_{0\pm}$ in the paragraph preceding Proposition \ref{LimCond}). 
Let $B_{0\pm}$ be equal to $[0,t_1]$ for some $t_1>0.$ Assume without loss of generality that $\uh=u_{\min}$ on $[0,t_1].$
Notice that by the strict complementarity condition for the control constraint (condition (i) of the present theorem) there exists $\lambda' = (\beta',\Psi',p',\dd\mu') \in\Lambda$ and $\eps,\delta>0$ such that $H_u(\uh_{t},\xh_{t},p'_t)>\delta$ for all $t\in [0,\varepsilon],$ and thus, by considering in \eqref{cseq1} a nonnegative Lipschitz continuous function $\varphi:[0,T]\to \cR$ being equal to $1/\delta$ on $ [0,t],$ we obtain $\yb_{k,t}=\ds\int_0^{t} \frac{v_{k,s}}{\sqrt{\gamma_k}}\dd s\rightarrow 0,$ since $v_k\geq 0$ on $[0,t_1].$ 
Hence $\yb =0$ on $[0,\eps].$
This last assertion, together with \eqref{chap1dybar0}, imply that $\yb=0$ on $B_{0\pm}.$

Suppose that $T$ is in a boundary arc $B_{T\pm}.$ 
Let $B_{T\pm}=[t_N,T].$ Then, we can derive that for some $\eps>0,$ 
$\yb_{k,T}-\yb_{k,t} = \ds\int_{t}^T\frac{v_{k,s}}{\sqrt{\gamma_k}}\dd s\rightarrow 0$ for all $t\in [T-\eps,T],$  by an argument analogous to the one above. Thus, $\yb_t=\hb$ on $[T-\eps,T],$ and hence, by \eqref{chap1dybar0} we get that
\be
\label{chap1yihsc2}
\yb=\hb,\ \mathrm{on}\ (t_N,T],\ \mathrm{if}\ T\in B.
\ee
Therefore, $(\yb,\hb,\xib)$ verifies \eqref{limitcond}(ii)-(iii)

%%%%%%%
{\em C)} Let us prove that $(\yb,\hb,\xib)$ satisfies \eqref{intcond}(i) and \eqref{limitcond}(iv).
For all $t\in [0,T]$, a first order Taylor expansion gives
\be
0 \geq  g(x_{k,t}) =  g(\xh_t)+ g'(\xh_t)\delta x_{k,t} + O(|\delta x_{k,t}|^2).
\ee
From latter estimate and \cite[Lemma 8.12]{ABDL11}, we deduce that
\be
\label{gC}
g'(\xh_t) z_{k,t} \leq o(\sqrt\gamma_{k})+O(|\delta x_{k,t}|^2) ,\quad \text{for all } t\in C.
\ee
Let $\varphi \geq 0$ be some continuous function with support in $C$. 
From \eqref{gC}, we get that
\be
\begin{split}
\int_0^T \varphi_t  g'(\xh_t) (\xi_{k,t} + & f_1(\xh_t) y_{k,t}) \dd t
 = \int_0^T \varphi_t  g'(\xh_t) z_{k,t} \dd t \\
& \leq  \|\varphi\|_\infty \int_0^T \big( o(\sqrt\gamma_{k}) +O(|\delta x_{k,t}|^2)  \big) \dd t
\leq o(\sqrt\gamma_{k}),
\end{split}
\ee
where the last inequality follows from \cite[Lemma 8.4]{ABDL11}.
Therefore, dividing by  $\sqrt{\gamma_k}$ and
passing to the limit, we obtain
\be
\label{gnegaa}
\int_0^T \varphi_t 
g'(\xh_t)(\xib_t + f_1(\xh_t)\yb_t) \dd t \leq 0.
\ee
Since $\varphi$ is an arbitrary nonnegative continuous function with support
in $C$ (and $C$ is a finite union of intervals), we deduce that
\be
\label{gneg}
g'(\xh_t)(\xib_t + f_1(\xh_t)\yb_t)\leq 0,\quad \text{for a.a. } t \in C.
\ee
In particular, if $T\in C,$ we get from  \eqref{gC}:
\be
\label{gT1}
g'(\xh_T)(\xib_{T}+f_1(\xh_T)\hb) \leq 0\quad \text{if } T\in C.
\ee

Take  $(\beta,\Psi,p,\dd \mu) \in \Lambda.$ By Proposition \ref{Propintg'} and since $u_{\min} \leq \uh+v_k\leq u_{\max},$ we have
\be\label{Dell2}
D\ell^{\beta,\Psi} (\xh_0,\xh_T)(z_{k,0},z_{k,T}) + \int_0^T g'(\xh_t) z_{k,t} \dd \mu_t = \int_0^T H_u(t)v_{k,t} \dd t\geq 0.
\ee
On the other hand, a first order Taylor expansion of $\ell^{\beta,\Psi}$ and \cite[Lemma 8.12]{ABDL11} lead to
\be
D\ell^{\beta,\Psi} (\xh_0,\xh_T)(z_{k,0},z_{k,T}) = \ell^{\beta,\Psi} (x_{k,0},x_{k,T}) -\ell^{\beta,\Psi}(\xh_0,\xh_T) +o(\sqrt\gamma_k).
\ee
Hence, by \eqref{Dell2}, 
\be\label{deltaell}
\begin{split}
0 &\leq \ell^{\beta,\Psi}(x_{k,0},x_{k,T}) -\ell  ^{\beta,\Psi} (\xh_0,\xh_T) +o(\sqrt\gamma_k) + \int_0^T g'(\xh_t) z_{k,t} \dd \mu_t \\
%& = \beta\Big( \phi(x_{k,0},x_{k,T}) - \phi(\xh_0,\xh_T) \Big)+ \sum_{i=n_1+1}^{n_2} \Psi_i \Phi_i(x_{k,0},x_{k,T})+o(\sqrt\gamma_k) + \int_0^T g'(\xh_t) z_{k,t} \dd \mu_t \\
& \leq\beta \phi(x_{k,0},x_{k,T}) -\beta\phi(\xh_0,\xh_T)+o(\sqrt\gamma_k) + \int_0^T g'(\xh_t) z_{k,t} \dd \mu_t,
\end{split}
\ee
where the last inequality holds since $\sum_{i=n_1+1}^{n_1+n_2} \Psi_i \Phi_i(x_{k,0},x_{k,T}) \leq 0.$
Observe now that, due to \eqref{chap1qgrowth}, $\beta\phi(x_{k,0},x_{k,T}) -\beta\phi(\xh_0,\xh_T) \leq o(\gamma_k).$ Hence, by latter estimate and from \eqref{deltaell}, we deduce that
\be
\label{intg'}
\frac{1}{\sqrt{\gamma_k}}\int_0^T g'(\xh_t) z_{k,t} \dd \mu_t= \int_0^T g'(\xh_t)(\xib_{k,t} + f_1(\xh_t)\yb_{k,t})  \dd \mu_t \geq o(1).
\ee
Since $\dd\mu_t$ has an essentially bounded density over $[0,T),$ we have that
\be
\label{intg'leq0} 
\begin{split}
0&\leq \liminf_{k\to \infty} \int_0^T  g'(\xh_t)(\xib_{k,t} + f_1(\xh_t)\yb_{k,t})  \dd \mu_t \\
&=  \lim_{k\to \infty} \Big( \int_{[0,T)} g'(\xh_t)(\xib_{k,t} + f_1(\xh_t)\yb_{k,t}) \dd \mu_t  + g'(\xh_T)(\xib_{k,T} + f_1(\xh_T)\yb_{k,T}) [\mu(T)]\big)\\
&= \int_{[0,T)} g'(\xh_t)(\xib_{t} + f_1(\xh_t)\yb_{t})\dd \mu_t + g'(\xh_T)(\xib_{T} +  f_1(\xh_T)\hb) [\mu(T)].
  \end{split}
  \ee
  In view of \eqref{gneg}, \eqref{gT1} and the complementary condition for the state constraint \eqref{weakcomp}(ii), we get from \eqref{intg'leq0} that
\be
g'(\xh_t) (\xib_t+f_1(\xh_t)\yb_t)=0,\quad \text{for a.a. } t \in C,
\ee
and
\be
g'(\xh_T)(\xib_{T} +  f_1(\xh_T)\hb)=0,\quad \text{whenever } T\in C\text{ and  } [\mu(T)] >0.
\ee
Then, $(\yb,\hb,\xib)$ satisfies \eqref{intcond}(i). Furthermore, if $T\in $ and $[\mu(T)] >0$ for some $\lambda\in \Lambda,$ it holds
\be
\hb = -\frac{g'(\xh_T)\xib_{T}}{g'(\xh_T)f_1(\xh_T)} = \lim_{t \uparrow T} -\frac{g'(\xh_t)\xib_{t}}{g'(\xh_t)f_1(\xh_t)} =  \lim_{t \uparrow T} \yb(t),
\ee
so that \eqref{limitcond}(iv) holds.

{\em D)} We shall now prove \eqref{transend}.
Let $i=1,\dots,n_1+n_2,$ then
\be
\label{Phi'}
\begin{split}
\Phi_i' (\xh_0,\xh_T)(\xib_0,\xib_T+f_1(\xh_T)\hb)
&= \lim_{k\to\infty} \Phi_i' (\xh_0,\xh_T) \frac{(\xi_{k,0},\xi_{k,T} + f_1(\xh_T) y_{k,T})}{\sqrt{\gamma_k}} \\
&= \lim_{k\to\infty} \Phi_i' (\xh_0,\xh_T) \frac{(z_{k,0},z_{k,T}) }{\sqrt{\gamma_k}}.
\end{split}
\ee
A first order Taylor expansion of $\Phi_i$ at $(\xh_0,\xh_T)$ and \cite[Lemma 8.12]{ABDL11} yield
\be
\label{Phi'z}
\Phi_i' (\xh_0,\xh_T) \frac{(z_{k,0},z_{k,T}) }{\sqrt{\gamma_k}}
= \frac{\Phi_i(x_{k,0},x_{k,T}) - \Phi_i (\xh_0,\xh_T)}{\sqrt{\gamma_k}} + o(1).
\ee
Thus, from \eqref{Phi'}-\eqref{Phi'z} we
get
\begin{gather*}
\Phi_i' (\xh_0,\xh_T)(\xib_0,\xib_T+f_1(\xh_T)\hb) = 0,\quad \text{for } i=1,\dots,n_1,\\
\Phi_i' (\xh_0,\xh_T)(\xib_0,\xib_T+f_1(\xh_T)\hb) \leq 0,\,\, \text{for } i=n_1+1,\dots,n_1+n_2,\ \text{with } \Phi_i (\xh_0,\xh_T) =0.
\end{gather*}
For the endpoint cost, we can obtain analogous expressions to \eqref{Phi'}-\eqref{Phi'z}, and then, from \eqref{chap1qgrowth} we get
\be
\phi' (\xh_0,\xh_T)(\xib_0,\xib_T+f_1(\xh_T)\hb) \leq 0.
\ee
Hence, \eqref{transend} is verified.

We conclude that $(\yb,\hb,\xib) \in \P^2_*,$ this is, {\em Fact 1} follows.

 {\em Proof of Fact 2.}
From equation \eqref{Lexpansion} in Proposition \ref{PropLexpansion} we obtain that
\be
\Omega(y_k,h_k,\xi_k,\lambda) =
\LL (u_k,x_k,\lambda)- \LL(\uh,\xh,\lambda) - \int_0^T H_u(t) v_{k,t} \dtt+o(\gamma_k) \leq o(\gamma_k),
\ee
where the last inequality follows from \eqref{quadlag} and since $H_u(t)v_{k,t} \geq 0,$ a.e. on $[0,T].$ Hence,
\be
\label{liminfOm}
\liminf_{k\to \infty} \Omega (y_k,h_k,\xi_k,\lambda) \leq
\limsup_{k\to \infty} \Omega (y_k,h_k,\xi_k,\lambda) \leq 0.
\ee

Let us recall that, for each $\lambda \in \Lambda,$ the mapping $\Omega(\cdot,\lambda)$ is a Legendre form in the Hilbert space $\{(y,h,\xi)\in L^2 \times \cR \times (H^1)^n:\text{\eqref{eqxitab} holds}\}$ (in view of hypothesis (iii) of the current theorem). In particular, for $\bar\lambda\in \Lambda$ reaching the maximum in \eqref{unifpos} for the critical direction $(\bar y,\hb,\xib),$ one has
\be
\label{gamma0}
\rho \gamma(\yb,\hb,\xib_0) \leq \Omega (\yb,\hb,\xib,\bar\lambda) = 
\liminf \Omega (\yb_k,\hb_k,\xib_k,\bar\lambda)\leq 0,
\ee
where the equality holds since $\Omega$ is a Legendre form and the
last inequality follows from \eqref{liminfOm}.
In view of \eqref{gamma0}, we get that $(\yb,\hb,\xib_0)=0$ 
and $\lim  \Omega(\yb_k,\hb_k,\xib_k,\bar\lambda)=0.$ Consequently, 
$(\yb_k,\hb_k,\xib_{k,0})$ converges strongly to $(\yb,\hb,\xib_0)=0,$ which is a contradiction since $(\yb_k,\hb_k,\xib_{k,0})$ has unit norm in $L^2 \times \cR \times \cR^n.$ We conclude that $(\uh,\xh)$ is a Pontryagin minimum satisfying $\gamma-$growth in the Pontryagin sense.
\end{proof}

\section*{Acknowledgments}

We wish to thank the anonymous referees for their bibliographical advices. 

This work was partially supported by the European Union under the 7th Framework Programme FP7-PEOPLE-2010-ITNÐ Grant agreement number 264735-SADCO. The last stage of this research took place while the first author was holding a postdoctoral position at IMPA, Rio de Janeiro, with CAPES-Brazil funding.

\appendix

\section{On second-order necessary conditions}
%--------------------------

\subsection{General constraints}\label{tool_ocfo}
%--------------------------------------------------------
We will study an abstract optimization problem of the form 
\be
\label{P0}
\min f(x); \quad G_E(x)=0, \,\, G_I(x) \in K_I,
\ee
where $X$, $Y_E$, $Y_I$ are Banach spaces, $f:X\rar\cR$,
$G_E:X\rar Y_E$, and $G_I:X\rar Y_I$
are functions of class $C^2$, 
and $K_I$ is a closed convex subset of $Y_I$ with nonempty interior.
The subindex $E$ is used to refer to `equalities' and $I$ to `inequalities'.

Setting
\be
\label{tool_ocfo2}
Y:= Y_E\times Y_I, \quad G(x):=(G_E(x),G_I(x)),
\quad K:= \{0\}_{K_E} \times K_I, 
\ee
we can rewrite problem \eqref{P0} in the more compact form
\be
\label{Pabs}\tag{$P_A$}
\min f(x); \quad G(x) \in K.
\ee
We use $F(P_A)$ to denote the set of feasible solutions of
\eqref{Pabs}.

\begin{remark}
\label{tool_ocfo.r1}
We refer to \cite{PAOP} for a 
systematic study of problem $\eqref{Pabs}.$
Here we will take advantage of the product structure (that
one can find in essentially all practical applications) 
 to introduce a {\em non qualified version} of 
second order necessary conditions specialized to the case of quasi radial directions, that extends in some sense \cite[Theorem 3.50]{PAOP}.
See Kawasaki \cite{Kaw88} for non radial directions. 
\end{remark}

The {\em tangent cone} (in the sense of convex analysis) to $K_I$ at $y\in K_I$ is defined as
\be
T_{K_I}(y) := \{ z\in Y_I: \dist(y+ t z,K_I)=o(t), \;\; \text{with } t\geq 0\},
\ee
and the {\em normal cone} to $K_I$ at $y\in K_I$ is 
\be
N_{K_I}(y) := \{ z^* \in Y_I^*: \la z^*, y'-y \ra \leq 0,\,\, \text{for all } y'\in K_I \}.
\ee

In what follows, we shall study a nominal feasible solution $\xh \in F(P_A)$
that may satisfy or not the {\em qualification condition}
\be
\label{qualif}
\left\{ \ba{lll}
{\rm (i)} & \text{$DG_E(\xh)$ is onto,}
\\
{\rm (ii)} & \text{there exists $z \in \Ker DG_E(\xh)$ such that }
            \text{ $G_I(\xh) + DG_I(\xh)z \in \intt(K_I)$. }
\ea\right.
\ee 
The latter condition coincides with the qualification condition in \eqref{tool_ocfo.qualifoc} which was introduced for the optimal control problem \eqref{P}.

\begin{remark}
\label{tool_ocfo.r3}
Condition \eqref{qualif}
is equivalent to 
\if {the existence of $\eps>0$ such that 
\be
\label{tool_ocfo.qualifROB}
\eps B_Y \subset G(\xh) + DG(\xh)X - K.
\ee
This is, for the general format \eqref{tool_ocfo3},
} \fi
the Robinson qualification condition in \cite{Rob76a}.
See the discussion in \cite[Section 2.3.4]{PAOP}. 

\end{remark}

The {\em Lagrangian function} of problem \eqref{Pabs} is defined as
\be
L(x,\lambda) := \beta f(x) + \la \lambda_E, G_E(x) \ra
+ \la \lambda_I, G_I(x) \ra,
\ee
where we set $\lambda :=(\beta,\lambda_E,\lambda_I)
\in \RR_+ \times Y_E^* \times Y_I^*.$
Define the {\em set of Lagrange multipliers}
\index{Lagrange multipliers}
associated with $ x \in F(P_A)$ as 
\be
\Lambda(x) := \{  \lambda \in \RR_+ \times Y_E^* \times N_{K_I}(G_I(x)): \lambda\neq0, \,D_x L(x,\lambda) = 0 \}. 
\ee

Let $y_I \in \intt(K_I),$ $y_I \neq G_I(\xh).$ 
We consider the following auxiliary problem,
where  $(x,\gamma) \in X \times \RR$:
\be
\label{APabs}\tag{$AP_A$}
\ba{lll} \disp
\min_{x,\gamma} \gamma; &
f(x) - f(\xh) \leq \gamma, \quad 
G_E(x)=0, \quad \gamma\geq -1/2,
\\ &
y_I + (1+\gamma)^{-1} \big( G_I(x) - y_I \big) \in K_I. 
\ea
\ee
Note that we recognize the idea of a {\em gauge function} (see e.g. \cite{Roc70}) in the 
last constraint. 

\begin{lemma}
\label{tool_ocfo.l-ap}
Assume that $\xh$ is a local solution of \eqref{Pabs}.
Then $(\xh,0)$ is a local solution of \eqref{APabs}.
\end{lemma}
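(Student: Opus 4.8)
The plan is to argue by contradiction, exploiting the gauge-function structure of the last constraint in \eqref{APabs} together with the convexity of $K_I$.

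First I would verify that $(\xh,0)$ is feasible for \eqref{APabs}. Indeed $f(\xh)-f(\xh)=0\leq 0=\gamma$, the equality $G_E(\xh)=0$ holds because $\xh\in F(P_A)$, the bound $0\geq -1/2$ is trivial, and since $(1+0)^{-1}=1$ the last constraint reduces to $y_I+(G_I(\xh)-y_I)=G_I(\xh)\in K_I$, which again holds as $\xh$ is feasible for \eqref{Pabs}. Thus $(\xh,0)$ is admissible with objective value $0$.

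Next I would suppose, for contradiction, that $(\xh,0)$ is \emph{not} a local minimizer of \eqref{APabs}. Then there exist feasible pairs $(x,\gamma)$ arbitrarily close to $(\xh,0)$ with $\gamma<0$. For such pairs $\gamma\in(-1,0)$, so the gauge factor $t:=(1+\gamma)^{-1}$ satisfies $t>1$. The crucial observation is that the last constraint, written as $w:=y_I+t\,(G_I(x)-y_I)\in K_I$, can be inverted: from $G_I(x)-y_I=t^{-1}(w-y_I)$ one gets
\be
G_I(x)=(1-t^{-1})\,y_I+t^{-1}\,w .
\ee
Since $t>1$ we have $t^{-1}\in(0,1)$, and both $y_I\in K_I$ (indeed $y_I\in\intt(K_I)$) and $w\in K_I$; convexity of $K_I$ then yields $G_I(x)\in K_I$.

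Consequently $x$ satisfies $G_E(x)=0$ and $G_I(x)\in K_I$, hence $x\in F(P_A)$, while $f(x)-f(\xh)\leq\gamma<0$ gives $f(x)<f(\xh)$, with $x$ as close to $\xh$ as desired. This contradicts the local optimality of $\xh$ for \eqref{Pabs}, completing the proof. I expect the only delicate point to be the convexity step: one has to notice that for $\gamma<0$ the factor $t=(1+\gamma)^{-1}$ exceeds $1$, so that $G_I(x)$ lies \emph{on} the segment between $y_I$ and the feasible point $w$ rather than beyond it; the remaining steps are routine substitutions.
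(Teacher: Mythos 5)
Your proposal is correct and follows essentially the same route as the paper: check feasibility of $(\xh,0)$, then observe that any feasible pair $(x,\gamma)$ with $\gamma<0$ has $G_I(x)$ a convex combination of $y_I\in K_I$ and the point $y_I+(1+\gamma)^{-1}(G_I(x)-y_I)\in K_I$, so $x\in F(P_A)$ with $f(x)<f(\xh)$, contradicting local optimality of $\xh$. The paper phrases this contrapositively (such $x$ cannot lie near $\xh$) rather than by contradiction, but the argument is identical.
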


\begin{proof}
We easily check that 
$(\xh,0)\in F(AP_A)$.
Now take $(x,\gamma)\in F(AP_A).$
Let us prove that if $-1/2 \leq \gamma<0,$ then $x$ cannot be closed to $\xh$ (in the norm of the Banach space $X$).
Assuming that  $-1/2 \leq \gamma<0,$ we get $G_E(x)=0,$
$G_I(x) \in K_I + (-\gamma) y_I \subseteq K_I$,
and 
$f(x) < f(\xh)$. Since 
$\xh$ is a local solution of \eqref{Pabs},
the $x$ cannot be too closed to $\xh.$ The conclusion follows. 
\end{proof}

The Lagrangian function of \eqref{APabs}, in 
qualified form,  is 
\be
\gamma + 
\beta( f(x) - f(\xh) - \gamma) 
+ \la \lambda_E, G_E(x) \ra
+ \la \lambda_I, 
y_I + (1+\gamma)^{-1} (G_I(x) - y_I) \ra.
\ee
or equivalently
\be
L(x,\lambda) + (\beta_0 - \beta) \gamma
+ \big((1+\gamma)^{-1} -1 \big) \la \lambda_I,  G_I(x) - y_I\ra.
\ee 
Setting 
$\hat\lambda=(\beta_0,\beta,\lambda_E,\lambda_I)$,
we see that the set of Lagrange multipliers 
of the auxiliary problem  \eqref{APabs}
at $(\xh,0)$ is 
\be
\label{hatLambda}
\hat\Lambda := 
\left\{ \ba{lll}
 \hat\lambda 
 \in \RR_+ \times \RR_+ \times Y_E^* \times N_{K_I}(G_I(\xh)): \lambda \neq 0, 
\\
D_x L(\xh,\lambda) =0; \; 
\beta + \la \lambda_I,  G_I(\xh) - y_I \ra = 1
\ea \right\}.
\ee

\begin{proposition}
\label{propderiv.p}
Suppose that \eqref{qualif}(i) holds. Then, the mapping
\be
\label{maplambda}
(\beta,\lambda_E,\lambda_I) 
\mapsto
\frac{(\beta + \la \lambda_I,  G_I(x) - y_I \ra,
\beta,\lambda_E,\lambda_I) }{\beta + \la \lambda_I,  G_I(x) - y_I \ra}
\ee
is a bijection between 
$\Lambda(\xh)$
and 
$\hat\Lambda_1$ (recall the definition  in  \eqref{LambdaBeta}).
\end{proposition}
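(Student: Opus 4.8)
The plan is to exhibit an explicit inverse of the map \eqref{maplambda} and to check that it and \eqref{maplambda} respect the defining conditions of $\Lambda(\xh)$ and of $\hat\Lambda_1$. Write $c(\lambda):=\beta+\la \lambda_I, G_I(\xh)-y_I\ra$ for $\lambda=(\beta,\lambda_E,\lambda_I)$, so that \eqref{maplambda} reads $\lambda\mapsto c(\lambda)^{-1}(c(\lambda),\beta,\lambda_E,\lambda_I)=(1,\,c(\lambda)^{-1}\lambda)$. The candidate inverse is the coordinate projection $\pi(\beta_0,\beta,\lambda_E,\lambda_I):=(\beta,\lambda_E,\lambda_I)$ that discards the objective multiplier $\beta_0$. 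Everything then reduces to showing these two maps are well defined, land in the correct sets, and compose to the identity.

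The main step --- and the only place where \eqref{qualif}(i) enters --- is the \emph{positivity lemma}: $c(\lambda)>0$ for every $\lambda\in\Lambda(\xh)$, which is exactly what guarantees \eqref{maplambda} is well defined. Since $\lambda_I\in N_{K_I}(G_I(\xh))$, testing the normal-cone inequality against $y_I\in K_I$ gives $\la\lambda_I, y_I-G_I(\xh)\ra\le 0$, hence $\la\lambda_I, G_I(\xh)-y_I\ra\ge 0$; combined with $\beta\ge 0$ this yields $c(\lambda)\ge 0$. Suppose $c(\lambda)=0$; then $\beta=0$ and $\la\lambda_I, G_I(\xh)-y_I\ra=0$, so the linear functional $\la\lambda_I,\cdot\ra$ attains its maximum over $K_I$ at the point $y_I\in\intt(K_I)$; testing against $y_I\pm d$ for all small $d$ forces $\lambda_I=0$. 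The stationarity condition $D_xL(\xh,\lambda)=0$ then collapses to $DG_E(\xh)^*\lambda_E=0$, and since $DG_E(\xh)$ is onto by \eqref{qualif}(i) its adjoint is injective, whence $\lambda_E=0$; thus $\lambda=0$, contradicting $\lambda\neq 0$. Therefore $c(\lambda)>0$.

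With positivity established, the image $\Phi(\lambda)=(1,c(\lambda)^{-1}\lambda)$ lies in $\hat\Lambda_1$: its first coordinate is $1$; the normalization $\beta+\la\lambda_I, G_I(\xh)-y_I\ra=1$ required in \eqref{hatLambda} holds after division by $c(\lambda)$; the sign condition $\beta/c(\lambda)\ge 0$ and the membership $\lambda_I/c(\lambda)\in N_{K_I}(G_I(\xh))$ survive because $N_{K_I}$ is a cone and $c(\lambda)>0$; and $D_xL(\xh,\cdot)$ being linear in the multiplier gives $D_xL(\xh,c(\lambda)^{-1}\lambda)=c(\lambda)^{-1}D_xL(\xh,\lambda)=0$. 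Conversely, for $\hat\lambda=(1,\beta,\lambda_E,\lambda_I)\in\hat\Lambda_1$ the projection $\pi(\hat\lambda)=(\beta,\lambda_E,\lambda_I)$ inherits the stationarity and cone conditions verbatim, and is nonzero since $c(\pi(\hat\lambda))=1$ by the normalization built into \eqref{hatLambda}; hence $\pi(\hat\lambda)\in\Lambda(\xh)$.

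It remains to verify that the maps invert one another. For $\hat\lambda\in\hat\Lambda_1$ we have $c(\pi(\hat\lambda))=1$, so applying \eqref{maplambda} to $\pi(\hat\lambda)$ returns $(1,\pi(\hat\lambda))=\hat\lambda$. In the other direction, \eqref{maplambda} is invariant under positive rescaling of $\lambda$ and selects the unique representative of the ray through $\lambda$ with $c=1$; composing with $\pi$ returns precisely this normalized representative $c(\lambda)^{-1}\lambda$, which is the sense in which \eqref{maplambda} identifies $\Lambda(\xh)$ with $\hat\Lambda_1$ (a genuine bijection once the positively homogeneous variable is normalized, exactly as used later when one writes $\hat\lambda_k=(1,\bar\lambda_k)$). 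I expect all difficulty to be concentrated in the positivity lemma: once the denominator cannot vanish the remaining checks are routine homogeneity bookkeeping, and it is the interplay between the interiority of $y_I$ and the surjectivity in \eqref{qualif}(i) that rules out the degenerate singular multipliers for which $c$ would be zero.
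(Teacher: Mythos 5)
Your proof is correct and follows essentially the same route as the paper: the whole argument hinges on showing the denominator $\beta+\la\lambda_I,G_I(\xh)-y_I\ra$ is strictly positive on $\Lambda(\xh)$, which the paper also derives from the interiority of $y_I$ (forcing $\la\lambda_I,G_I(\xh)-y_I\ra>0$ when $\lambda_I\neq 0$) together with \eqref{qualif}(i) (ruling out $(\beta,\lambda_I)=0$ via injectivity of $DG_E(\xh)^*$). The only difference is presentational --- you argue by contradiction from $c(\lambda)=0$ and then spell out the inverse map and the composition checks, whereas the paper runs the dichotomy $\lambda_I=0$ versus $\lambda_I\neq 0$ and leaves the bijectivity bookkeeping implicit.
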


\begin{proof}
Since \eqref{qualif}(i) holds, then we necessarily have that $(\beta,\lambda_I) \neq 0$ for all $\lambda=(\beta,\lambda_E,\lambda_I) \in \Lambda(\xh).$ Therefore, if $\lambda_I = 0$ then $\beta>0$ and $\beta + \la \lambda_I,  G_I(x) - y_I \ra > 0.$ If by the contrary, $\lambda_I \neq 0,$ then $\la \lambda_I,  G_I(x) - y_I \ra > 0$ and again, $\beta + \la \lambda_I,  G_I(x) - y_I \ra > 0.$
Hence, the mapping in \eqref{maplambda} is well-defined and is a bijection from $\Lambda(\xh)$ to $\hat\Lambda_1,$ as we wanted to show.
\end{proof}

\begin{theorem}
\label{tool_ocfo.t2}
Let $\xh$ be a local solution of \eqref{Pabs},
such that $DG_E(\xh)$ is surjective.
Then $\hat\Lambda_1$ is non empty and bounded. 
\end{theorem}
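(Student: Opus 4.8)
The plan is to deduce the statement from the auxiliary problem \eqref{APabs}, whose qualification is automatic by construction, and then to invoke the classical first order theory. First I would apply Lemma \ref{tool_ocfo.l-ap}: since $\xh$ is a local solution of \eqref{Pabs}, the pair $(\xh,0)$ is a local solution of \eqref{APabs}. The set $\hat\Lambda$ of \eqref{hatLambda} is precisely the set of Lagrange multipliers of \eqref{APabs} at $(\xh,0)$: stationarity of the Lagrangian in $x$ gives $D_x L(\xh,\lambda)=0$, while stationarity in $\gamma$ at $\gamma=0$ gives $\beta_0=\beta+\la\lambda_I,G_I(\xh)-y_I\ra$, and $\hat\Lambda_1$ is the subset with objective multiplier $\beta_0=1$, i.e. $\beta+\la\lambda_I,G_I(\xh)-y_I\ra=1$. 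Hence it suffices to show that \eqref{APabs} satisfies Robinson's qualification at $(\xh,0)$ and then to apply the first order necessary conditions under that qualification, which assert exactly that the normalized multiplier set is nonempty, bounded and weakly* compact.

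The heart of the argument, and the step I expect to be the main obstacle, is the verification of Robinson's condition (the analogue of \eqref{qualif}) for \eqref{APabs}, written in the format of \eqref{Pabs} with variable $(x,\gamma)$. The equality map $(x,\gamma)\mapsto G_E(x)$ has derivative $(z,\delta\gamma)\mapsto DG_E(\xh)z$, which is onto $Y_E$ because $DG_E(\xh)$ is surjective by hypothesis; this handles part (i). For part (ii) I would exhibit a direction in the kernel of the equality derivative that strictly relaxes every inequality constraint, and here the gauge construction together with $y_I\in\intt(K_I)$ is decisive. Taking $(z,\delta\gamma)=(0,t)$ with $t\in(0,1)$, one has $DG_E(\xh)z=0$; the bound $f(x)-f(\xh)-\gamma\le 0$ linearizes to $-t<0$; the bound $\gamma\ge-1/2$ linearizes to $-1/2-t<0$; and the cone constraint linearizes to
\be
G_I(\xh) + (y_I - G_I(\xh))\,t = (1-t)\,G_I(\xh) + t\,y_I \in \intt(K_I),
\ee
the membership holding since $G_I(\xh)\in K_I$, $y_I\in\intt(K_I)$, $K_I$ is convex and $t\in(0,1)$. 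Since $K_I$ has nonempty interior, the inequality cone $\RR_-\times\RR_-\times K_I$ of \eqref{APabs} has nonempty interior, and the point above lies in it; thus all inequality constraints are strictly feasible along $(0,t)$, and Robinson's condition holds.

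With qualification established, the conclusion follows from the standard first order necessary conditions for \eqref{APabs} at the local solution $(\xh,0)$ (see \cite{PAOP} and \cite{Rob76a}, and Remark \ref{tool_ocfo.r3}). Robinson's condition guarantees the existence of a Lagrange multiplier with objective multiplier $\beta_0=1$, so the normalized multiplier set is nonempty; moreover Robinson's condition is equivalent to the boundedness and weak* compactness of that set. Identifying this set with $\hat\Lambda_1$ through the stationarity in $\gamma$ (which forces $\beta_0=\beta+\la\lambda_I,G_I(\xh)-y_I\ra$) and the normalization $\beta+\la\lambda_I,G_I(\xh)-y_I\ra=1$, we conclude that $\hat\Lambda_1$ is nonempty and bounded, as claimed.
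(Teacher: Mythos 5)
Your proposal is correct and follows essentially the same route as the paper: invoke Lemma \ref{tool_ocfo.l-ap} to pass to the auxiliary problem \eqref{APabs}, verify Robinson's qualification there by an explicit direction that moves only in the $\gamma$-variable (the paper takes $(z,\delta)=(0,1)$, giving $y_I\in\intt(K_I)$ directly; your $(0,t)$, $t\in(0,1)$, gives the same conclusion via convexity), and then conclude nonemptiness and boundedness of the normalized multiplier set $\hat\Lambda_1$ from Robinson's classical results. The identification of $\hat\Lambda$ with the multipliers of \eqref{APabs}, which you spell out via stationarity in $\gamma$, is exactly what the paper establishes in the discussion preceding the theorem.
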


\begin{proof}
By lemma \ref{tool_ocfo.l-ap},
$(\xh,0)$ is a local solution of 
\eqref{APabs}.  In addition the qualification condition for the latter problem
 at the point 
$(\xh,0)$ states as follows: there exists $(z,\delta) \in \Ker DG_E(\xh) \times \cR$ such that
\begin{gather*}
Df(\xh)z<\delta,\quad \delta>0,\\
G_I(\xh) +  D G_I(\xh)z -\delta(G_I(\xh) - y_I) \in \intt(K_I).
\end{gather*}
These conditions trivially hold for 
$(z,\delta)=(0,1).$
Hence, in view of classical results by e.g. Robinson \cite{Rob76a},
the conclusion follows. 
\end{proof}

\subsection{Second order necessary optimality conditions}\label{tool_ocso}
%------------------------------------------------------------

Let us introduce the notation $[a,b]$ to refer to the segment $\{\rho a+(1-\rho)b;\ \text{for }\rho \in [0,1]\},$ defined for any pair of points $a,b$ in an arbitrary vector space $Z.$

\begin{definition}
\label{tool_ocso.d1}
Let $y\in K$. We say that $z\in Y$ is a
{\em radial direction} to $K$ at $y$ if 
$[y,y+\eps z] \subset K$
for some $\eps >0$,
and a 
{\em quasi-radial direction} if 
$\dist(y+\sigma z,K) = o(\sigma^2)$ for $\sigma>0$. 
\end{definition}

Note that any radial direction is also quasi-radial, and both radial and quasi radial directions are tangent.
With $\xh\in F(P_A)$, we associate the {\em critical cone}\index{critical cone}
\be
\label{tool_ocso1}
C(\xh) := \{z\in X: Df(\xh)z \leq 0, \; DG_E(\xh)z = 0, \; DG_I(\xh)z \in T_K(G_I(\xh)) \}. 
\ee

\begin{definition}
\label{tool_ocso.d2}
We say that $z\in C(\xh)$ is a {\em radial (quasi radial)
critical direction} for problem \eqref{Pabs} if 
$D G_I(\xh) z$ is a radial (quasi radial) direction 
to $K_I$ at $G_I(\xh)$.
We write $C_{QR}(\xh)$ for the {\em set of quasi radial
critical directions.}
The critical cone $C(\xh)$ is 
{\em quasi radial} if $C_{QR}(\xh)$ is 
a dense subset of $C(\xh)$.
\end{definition}

It is immediate to check that  $C_{QR}(\xh)$ is a convex cone. 

We next state {\em primal second order necessary conditions} for the problem \eqref{Pabs}. 
Consider the following optimization problem,
where $z\in X$, $w\in X$ and $\theta\in\cR$:
\be
\label{Qz}
\tag{$Q_z$}
\left\{ 
\begin{split}
&\min_{w,\theta} \;\; \theta,
\\
& Df(\xh) w + D^2f(\xh)(z,z) \le \theta,
\\ & D G_E(\xh) w + D^2 G_E(\xh)(z,z) = 0,
\\
& D G_I(\xh) w + D^2 G_I(\xh)(z,z)  
- \theta (G(\xh)-y_I) \in T_K(G_I(\xh)).
\end{split}
\right.
\ee

\begin{theorem}
\label{tool_ocso.t1}
Let $(\xh,0)$ be a local solution of \eqref{APabs},
such that $DG_E(\xh)$ is surjective,
and let  
$h\in C_{QR}(\xh)$.
Then problem 
\eqref{Qz} is feasible, and has a nonnegative value. 
\end{theorem}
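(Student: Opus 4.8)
The plan is to argue by contradiction, exploiting that $(\xh,0)$ is assumed to be a local solution of \eqref{APabs} and that \eqref{APabs} was built to satisfy a constraint qualification at $(\xh,0)$. Throughout I set $z=h$ in \eqref{Qz} and write $(Q_h)$, and abbreviate $a:=G_I(\xh)-y_I$ (nonzero by hypothesis), $d_1:=DG_I(\xh)h$, and, for a candidate $(w,\theta)$, $d_2:=DG_I(\xh)w+D^2G_I(\xh)(h,h)$. Since $y_I\in\intt K_I$, choosing $\rho_0>0$ with $B(y_I,\rho_0)\subset K_I$ and using convexity shows that $-a=y_I-G_I(\xh)$ is an interior direction of the tangent cone $T_{K_I}(G_I(\xh))$; indeed $(y_I-G_I(\xh))+\rho_0 B\subset T_{K_I}(G_I(\xh))$.

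For feasibility of $(Q_h)$: since $DG_E(\xh)$ is onto, I would choose $w_0$ with $DG_E(\xh)w_0=-D^2G_E(\xh)(h,h)$, which meets the equality constraint of $(Q_h)$. With $w_0$ fixed, the other two constraints hold for all large $\theta$: the cost constraint needs only $\theta\ge Df(\xh)w_0+D^2f(\xh)(h,h)$, while for the cone constraint I write $d_2-\theta a=\theta\bigl[(y_I-G_I(\xh))+d_2/\theta\bigr]$, and for $\theta\ge\|d_2\|/\rho_0$ the bracket lies in $T_{K_I}(G_I(\xh))$ by the interiority just noted, so scaling by $\theta>0$ keeps it there. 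Hence $(Q_h)$ is feasible.

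For nonnegativity of the value, suppose the contrary, so that there is a feasible $(w,\theta)$ with $\theta<0$; the cone constraint then reads $d_2-\theta a\in T_{K_I}(G_I(\xh))$. I would test the local optimality of $(\xh,0)$ along the arc $\tilde x(\sigma):=\xh+\sigma h+\tfrac{\sigma^2}{2}w$, $\tilde\gamma(\sigma):=\tfrac{\sigma^2}{2}\theta$ as $\sigma\downarrow0$, and show it is feasible for \eqref{APabs} up to an $o(\sigma^2)$ defect in every constraint. For the equality, $DG_E(\xh)h=0$ together with $DG_E(\xh)w+D^2G_E(\xh)(h,h)=0$ give $G_E(\tilde x(\sigma))=o(\sigma^2)$. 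For the cost, a Taylor expansion with $Df(\xh)h\le0$ and $Df(\xh)w+D^2f(\xh)(h,h)\le\theta$ gives $f(\tilde x(\sigma))-f(\xh)-\tilde\gamma(\sigma)\le \sigma Df(\xh)h+o(\sigma^2)\le o(\sigma^2)$. For the gauge constraint, expanding $(1+\tilde\gamma(\sigma))^{-1}$ shows the relevant point equals $G_I(\xh)+\sigma d_1+\tfrac{\sigma^2}{2}(d_2-\theta a)+o(\sigma^2)$; since $h$ is quasi radial, $\dist(G_I(\xh)+\sigma d_1,K_I)=o(\sigma^2)$, and since $d_2-\theta a\in T_{K_I}(G_I(\xh))$ a triangle-inequality estimate (projecting $G_I(\xh)+\sigma d_1$ onto $K_I$ and using $\dist(G_I(\xh)+s(d_2-\theta a),K_I)=o(s)$ at $s=\sigma^2/2$) keeps the total distance $o(\sigma^2)$. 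Finally $\tilde\gamma(\sigma)\ge-1/2$ for small $\sigma$.

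Because \eqref{APabs} satisfies Robinson's qualification at $(\xh,0)$ — the admissible pair $(z,\delta)=(0,1)$ used in the proof of Theorem \ref{tool_ocfo.t2} yields $y_I\in\intt K_I$ — its constraint system is metrically regular there, so by \cite{Rob76a} the $o(\sigma^2)$-infeasible pair $(\tilde x(\sigma),\tilde\gamma(\sigma))$ can be projected onto an exactly feasible $(x(\sigma),\gamma(\sigma))$ with $\|(x(\sigma),\gamma(\sigma))-(\tilde x(\sigma),\tilde\gamma(\sigma))\|=o(\sigma^2)$; in particular $\gamma(\sigma)=\tfrac{\sigma^2}{2}\theta+o(\sigma^2)<0$ for small $\sigma>0$ while $(x(\sigma),\gamma(\sigma))\to(\xh,0)$, contradicting the local optimality of $(\xh,0)$. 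Thus the value of $(Q_h)$ is nonnegative. The step I expect to be most delicate is exactly the exact feasibility of the gauge (cone) constraint: it is reconciled by combining quasi-radiality, which pins the first-order defect at the second-order scale $o(\sigma^2)$, with the metric regularity of the auxiliary problem, which turns this small defect into an $o(\sigma^2)$ correction that does not destroy the sign $\gamma(\sigma)<0$.
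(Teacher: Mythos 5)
Your overall architecture is genuinely different from the paper's, and most of it is legitimate. For the nonnegativity part, the paper corrects only the equality constraint (via Lyusternik's theorem \cite{lyu34}, using surjectivity of $DG_E(\xh)$), replaces $\theta_0$ by $\half\theta_0$ to create an $\eps$-interiority margin, and then proves \emph{exact} membership $G_I(x'_\sigma)\in K_I$ by a convexity argument; you instead prove $o(\sigma^2)$-approximate feasibility of the whole auxiliary system and invoke metric regularity of \eqref{APabs} at $(\xh,0)$. That regularity is indeed available under the theorem's sole hypothesis that $DG_E(\xh)$ is onto, since Robinson's condition for \eqref{APabs} is verified by the direction $(z,\delta)=(0,1)$, exactly as in the paper's proof of Theorem \ref{tool_ocfo.t2}; your route even dispenses with the paper's halving of $\theta$, because the correction perturbs $\tilde\gamma(\sigma)=\half\sigma^2\theta$ only by $o(\sigma^2)$. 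The feasibility part of your argument coincides with the paper's.

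There is, however, a genuine gap at the step you yourself flag as delicate: the bound $\dist\big(G_I(\xh)+\sigma d_1+\half\sigma^2 v,\,K_I\big)=o(\sigma^2)$, with $v:=d_2-\theta a$, does not follow from the triangle-inequality argument you describe. After projecting $G_I(\xh)+\sigma d_1$ onto a point $k_\sigma\in K_I$ (error $o(\sigma^2)$, by quasi-radiality), you must estimate $\dist(k_\sigma+\half\sigma^2 v,\,K_I)$; but the tangency information $\dist(G_I(\xh)+s v,K_I)=o(s)$ is anchored at $G_I(\xh)$, while $k_\sigma$ lies at distance $O(\sigma)$ from $G_I(\xh)$, so it cannot be transported to $k_\sigma$ by a triangle inequality. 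Translating the feasible point $k_\sigma$ by the vector $\half\sigma^2 v$ gives only the useless bound $O(\sigma^2)$, and $v$ need not be a tangent direction at $k_\sigma$ at all (sums of the form $k_\sigma+k'-G_I(\xh)$ with $k_\sigma,k',G_I(\xh)\in K_I$ generally leave $K_I$). The repair is the convex rescaling which is precisely the paper's $k_1(\sigma),k_2(\sigma)$ device: write
\[
G_I(\xh)+\sigma d_1+\half\sigma^2 v \;=\; (1-\sigma)\Big[G_I(\xh)+\tfrac{\sigma}{1-\sigma}\,d_1\Big]\;+\;\sigma\Big[G_I(\xh)+\half\sigma v\Big],
\]
and use convexity of $\dist(\cdot,K_I)$: the first bracket is at distance $o(\sigma^2)$ from $K_I$ by quasi-radiality evaluated at the parameter $\sigma/(1-\sigma)$, and the second is at distance $o(\sigma)$ by tangency, which after multiplication by its weight $\sigma$ contributes $o(\sigma^2)$. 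With this substitution your metric-regularity argument closes correctly; without it, the crucial $o(\sigma^2)$ bound on the gauge-constraint defect is unproven.
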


\begin{proof}
We shall first show that \eqref{Qz} is feasible. 
Since
$DG_E(\xh)$ is surjective, there exists 
$w \in X$ such that $D G_E(\xh) w + D^2 G_E(\xh)(z,z) = 0$.
Since $T_K(G_I(\xh))$ is a cone, the last equation 
divided by $\theta>0$ is equivalent to
\be 
\label{thetamungxh}
\theta ^{-1} \big(D G_I(\xh) w + D^2 G_I(\xh)(z,z)\big) 
+ y_I - G(\xh)\in T_K(G_I(\xh)).
\ee
Since $y_I\in\intt(K_I)$,
we have that 
$y_I - G(\xh)\in\intt T_K(G_I(\xh))$, and therefore 
the last constraint of \eqref{Qz} holds
when $\theta$ is large enough. 
So it does the first constraint,
%$Df(\xh) w + D^2f(\xh)(h,h) \leq \theta$,
and hence, \eqref{Qz} is feasible. 

We next have to show that we cannot have 
 $(w,\theta_0)\in F(Q_z)$ with $\theta_0<0.$ Let us suppose, on the contrary, that there is such a feasible solution $(w,\theta_0).$
Set $\theta:=\half \theta_0$. 
Then 
$Df(\xh) w + D^2f(\xh)(z,z) < \theta.$
Using \eqref{thetamungxh} and
$y_I\in\intt(K_i)$,
 we can easily show that,
for some $\eps>0$: 
\be
\label{gI2yI}
D G_I(\xh) w + D^2 G_I(\xh)(z,z)  
- \theta (G_I(\xh)-y_I) + \eps B \in T_K(G_I(\xh)).
\ee

Consider, for $\sigma>0$, the path
\be
x_\sigma := \xh + \sigma z + \half \sigma^2 w.
% ;\qquad \gamma_\sigma :=\half \sigma^2 \theta.
\ee
By a second order Taylor expansion we obtain that 
$G_E(x_\sigma) = o(\sigma^2)$. 
Since $DG_E(\xh)$ is onto,
by Lyusternik's theorem \cite{lyu34}, 
there exists a 
path $x'_\sigma = x_\sigma + o(\sigma^2)$,
such that $G_E(x'_\sigma)=0$. 
Assuming, without loss of generality, that $G_I(\xh)=0$, we get 
\be
\label{tool_ocso.t1g}
G_I(x'_\sigma)  = \sigma D G_I(\xh)z+ \half \sigma^2 
\left[ D G_I(\xh) w + D^2 G_I(\xh)(z,z) \right] + o(\sigma^2).
\ee
Setting
\be
\left\{ 
\begin{split}
k_1(\sigma) &:= (1-\sigma)^{-1}\sigma D G_I(\xh)z,
\\
k_2(\sigma) &:= \sigma \big( D G_I(\xh) w + D^2 G_I(\xh)(z,z) \big),
\end{split}
\right.
\ee
we can rewrite \eqref{tool_ocso.t1g} as 
\be
G_I(x'_\sigma)  = (1-\sigma) k_1(\sigma) + \half \sigma k_2(\sigma) 
 + o(\sigma^2).
\ee
Since $z$ is a quasi radial critical direction, there exists 
$k'_1(\sigma)\in K_I$ such that 
\be
\sigma D G_I(\xh)z = k'_1(\sigma)
+ o(\sigma^2),
\ee
and so,
\be
G_I(x'_\sigma)  \in (1-\sigma) K_I % (1-\half\sigma) k_1(\sigma)
 + \half \sigma k_2(\sigma) 
 + o(\sigma^2).
\ee
Using  \eqref{gI2yI} and $G_I(\xh)=0$
we obtain
\be
k_2(\sigma) + \sigma \theta y_I + 
\sigma \eps B \in K_I.
\ee
Therefore, for $\sigma>0$ small enough
\be
G_I(x'_\sigma) %+ \half\eps \sigma^2 B 
\in (1-\half\sigma) K_I % (1-\half\sigma) k_1(\sigma)
 + \half \sigma K_I - \half\sigma^2(\theta y_I +o(1))
\subset K_I,
\ee
where we have used the fact that since 
$0=G_I(\xb)\in K_I$, we have that 
(remember that $\theta<0$):
$\half\sigma^2 (-\theta) (y_I + \eps B) \subset K_I$.

We check easily that 
$f(x'_\sigma) <0$,
and so, we have constructed a feasible path for \eqref{APabs},  contradicting the local optimality of
$(\xh,0)$. 

We conclude that such a solution $(w,\theta_0)$ of \eqref{Qz} with $\theta_0<0$ cannot exist and, therefore, \eqref{Qz} has nonnegative value.
\end{proof}

We now present dual second order necessary conditions. 
\if{
Consider the problem
\be
\label{tool_ocso.t2_5}
\max_{\lambda\in \Lambda(\xh)} D^2_{xx} L(\xh,\lambda)(z,z). 
\ee
} \fi

\begin{theorem}
\label{appt2}
Let $\xh$ be a local minimum of \eqref{Pabs},
that satisfies the qualification condition
\eqref{qualif}.
Then, for every  $z\in C_{QR}(\xh),$ 
\be
\label{tool_ocso.t2_1}
\max_{\lambda \in \Lambda(\xh)} D^2_{xx} L(\xh,\lambda)(z,z) \geq 0.
\ee
\end{theorem}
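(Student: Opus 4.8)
The plan is to deduce the dual (multiplier) form \eqref{tool_ocso.t2_1} from the primal second order necessary condition already established in Theorem \ref{tool_ocso.t1}, by passing to the Lagrangian dual of the linearized subproblem \eqref{Qz}. Fix $z\in C_{QR}(\xh)$. Since $\xh$ is a local solution of \eqref{Pabs} and \eqref{qualif}(i) holds, Lemma \ref{tool_ocfo.l-ap} shows that $(\xh,0)$ is a local solution of \eqref{APabs}, and Theorem \ref{tool_ocso.t1} then guarantees that \eqref{Qz} is feasible with nonnegative optimal value.

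Next I would dualize \eqref{Qz}, which is a linear conic optimization problem in the variables $(w,\theta)\in X\times\RR$. Introducing multipliers $\beta\geq0$ for the scalar inequality, $\lambda_E\in Y_E^*$ for the equality, and $\lambda_I$ in the polar of the cone $T_K(G_I(\xh))$ of \eqref{Qz}, i.e. $\lambda_I\in N_{K_I}(G_I(\xh))$, I form the Lagrangian and minimize over $(w,\theta)$. The coefficient of $w$ is precisely $D_xL(\xh,\lambda)$ with $\lambda=(\beta,\lambda_E,\lambda_I)$, so finiteness of the dual forces the stationarity condition $D_xL(\xh,\lambda)=0$; the coefficient of $\theta$ is $1-\beta-\la\lambda_I,G_I(\xh)-y_I\ra$, whose vanishing yields exactly the normalization $\beta+\la\lambda_I,G_I(\xh)-y_I\ra=1$ appearing in \eqref{hatLambda}. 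The remaining constant term equals $D^2_{xx}L(\xh,\lambda)(z,z)$. Hence the dual of \eqref{Qz} is to maximize $D^2_{xx}L(\xh,\lambda)(z,z)$ over the multipliers $\lambda$ that are stationary, satisfy $\beta\geq0$, $\lambda_I\in N_{K_I}(G_I(\xh))$, and are normalized as above, that is, over the $\lambda$-component of $\hat\Lambda_1$.

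The crux is then a no-duality-gap statement for \eqref{Qz}. I would verify the generalized Slater/Robinson constraint qualification for \eqref{Qz}: because $y_I\in\intt(K_I)$, the argument in the proof of Theorem \ref{tool_ocso.t1} shows the conic constraint can be met with interior slack once $\theta$ is large, while surjectivity of $DG_E(\xh)$ (condition \eqref{qualif}(i)) handles the equality; together these give strict feasibility, hence strong duality. Consequently the dual value equals the primal value $\geq0$, and since $\hat\Lambda_1$ is nonempty, bounded and weak* closed (Theorem \ref{tool_ocfo.t2}), the dual maximum is attained at some normalized multiplier with $D^2_{xx}L(\xh,\lambda)(z,z)\geq0$.

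Finally I would transfer this back to $\Lambda(\xh)$. By Proposition \ref{propderiv.p} the map \eqref{maplambda} is a bijection between $\Lambda(\xh)$ and $\hat\Lambda_1$ that rescales the $\lambda$-part by a positive factor; since $\lambda\mapsto D^2_{xx}L(\xh,\lambda)(z,z)$ is linear in $\lambda$, positive scaling preserves its sign, so the multiplier found above corresponds to some $\lambda\in\Lambda(\xh)$ with $D^2_{xx}L(\xh,\lambda)(z,z)\geq0$, which gives \eqref{tool_ocso.t2_1}. I expect the main obstacle to be the no-duality-gap step: in the infinite-dimensional conic setting one must carefully invoke a conic duality theorem (as in \cite{PAOP}) and check that the interior-point/Robinson qualification for \eqref{Qz} genuinely holds, which is what legitimizes passing from the nonnegative primal value to an \emph{attained} nonnegative dual value.
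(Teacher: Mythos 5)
Your proposal is correct and follows essentially the same route as the paper: both pass from the primal second order condition of Theorem \ref{tool_ocso.t1} for \eqref{Qz} to its Lagrangian dual, identify the dual objective as $D^2_{xx}L(\xh,\lambda)(z,z)$ over the normalized multipliers of \eqref{hatLambda}, and invoke strong duality with dual attainment (the paper cites \cite{EkeTem} where you argue via Slater/Robinson qualification and weak* compactness of $\hat\Lambda_1$) before rescaling back to $\Lambda(\xh)$. The paper's proof is simply a compressed version of exactly this argument.
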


\begin{proof}
Since  problem \eqref{Qz} is qualified with a
finite nonnegative value, 
by the convex duality theory  \cite{EkeTem},
its dual has a nonnegative value
and a nonempty set of solutions. 
The Lagrangian of problem \eqref{Qz} 
in qualified form ($\beta_0=1$)
can be written as 
\be
D_xL(\xh,\lambda)w + D^2_{xx} L(\beta,\xh,\lambda)(z,z)+
\big(1-\beta + \la\lambda_I,G(\xh)-y_I\ra \big) \theta,
\ee
where 
$\lambda=(\beta,\lambda_E,\lambda_I)$
as before,  and so, the dual problem of \eqref{Qz}
can be written as 
$$
\Max_{\lambda\in\Lambda(\xh)} 
D^2_{xx} L(\beta,\xh,\lambda)(z,z);
\quad
\beta + \la\lambda_I,G(\xh)-y_I\ra) \theta=1.
$$
The conclusion follows.
\end{proof}

\begin{remark}
Whereas the above theorem follows from Cominetti \cite{Com90} or Kawasaki \cite{Kaw88}, 
our proof avoids the concepts of 
second order tangent set and its associated calculus, 
used in these references. This considerably simplifies
the proof.
\end{remark}

\bibliographystyle{plain}
\bibliography{diopt,hjb,paop,stateconstraints,shooting,singulararc}

\newpage

\end{document}